\documentclass[11pt]{amsart}
\usepackage{amscd}
\usepackage[all]{xy}
\usepackage{graphicx}
\usepackage{amsmath}
\usepackage{amsfonts}
\usepackage{amssymb}
\usepackage{amsthm}
\usepackage{latexsym}
\usepackage{dsfont}
\usepackage{slashed}
\usepackage{enumerate}
\usepackage{bbm}

\usepackage{soul}
\usepackage{comment}
\usepackage{color}
\usepackage{rotating}
\usepackage{url}

\usepackage{mathrsfs}
\usepackage{verbatim}
\usepackage{mathtools}

\usepackage{longtable}
\usepackage[table]{xcolor}


\newcommand{\ce}{\coloneqq}

\newcommand{\gr}{\rowcolor[gray]{.89}}
\newcommand{\f}[1]{{$\scriptstyle{#1}$}}
\newcommand{\y}[1]{{$\scriptstyle{#1}$}}
\newcommand{\ttt}{{\mathfrak t}}
\newcommand{\uuu}{{\mathfrak u}}
\renewcommand{\sp}{\mathfrak{\mathop{sp}}}

\definecolor{brown}{RGB}{150,100,0}

\definecolor{purple}{RGB}{128,0,128}

\definecolor{grey}{RGB}{128,128,128}

\parskip=4pt

\numberwithin{equation}{subsection}

\theoremstyle{plain}
\newtheorem{lemma}[equation]{Lemma}
\newtheorem{proposition}[equation]{Proposition}
\newtheorem{theorem}[equation]{Theorem}
\newtheorem{corollary}[equation]{Corollary}

\theoremstyle{definition}
\newtheorem{definition}[equation]{Definition}
\newtheorem{remark}[equation]{Remark}
\newtheorem{example}[equation]{Example}

\newtheorem{subsec}[equation]{}
\newtheorem{construction}[equation]{Construction}

\newtheorem*{proposition*}{Proposition}
\newtheorem*{lemma*}{Lemma}

\voffset=-15mm 
\vsize=235mm 
\textheight=235mm 
\hsize=154mm 
\textwidth=154mm 
\hoffset=-16mm 
\parskip=4pt
\parindent=12pt

\DeclareGraphicsRule{.tif}{png}{.png}{`convert #1 `dirname #1`/`basename #1 .tif`.png}
\newcommand{\R}{{\mathds R}}
\newcommand{\C}{{\mathds C}}
\newcommand{\F}{{\mathds F}}
\newcommand{\Z}{{\mathds Z}}

\newcommand{\Q}{{\mathds Q}}

\newcommand{\K}{{\mathbbm k}}

\renewcommand{\P}{{\mathbb P}}

\newcommand{\Aa}{{\mathcal A}}

\newcommand{\Cc}{{\mathcal C}}    



\newcommand{\Oo}{{\mathcal O}}

\newcommand{\Tt}{{\mathcal T}}

\newcommand{\Zz}{{\mathcal Z}}

\newcommand{\ad}{{\rm ad \,}}
\newcommand{\Ad}{{\rm Ad }}

\newcommand{\eps}{{\varepsilon}}

\renewcommand{\a}{{\mathfrak a}}

\newcommand{\z}{{\mathfrak z}}

\newcommand{\g}{{\mathfrak g}}
\newcommand{\gl}{{\mathfrak {gl}}}
\newcommand{\h}{{\mathfrak h}}

\renewcommand{\c}{{\mathfrak c}}

\renewcommand{\u}{{\mathfrak u}}
\newcommand{\su}{\mathfrak{su}}

\newcommand{\Cg}{{\mathfrak C}}

\newcommand{\ssl}{{\mathfrak {sl}}}



\newcommand{\GL}{{\rm GL}}
\newcommand{\SL}{{\rm SL}}

\newcommand{\hs}{\kern 0.8pt}
\newcommand{\hsss}{\kern 1.2pt}
\newcommand{\hm}{\kern -0.8pt}
\newcommand{\hmm}{\kern -1.2pt}

\newcommand{\hssh}{\kern 1.2pt}
\newcommand{\hshs}{\kern 1.6pt}
\newcommand{\hssss}{\kern 2.0pt}

\newcommand{\into}{\hookrightarrow}
\newcommand{\isoto}{\overset{\sim}{\to}}

\newcommand{\labelto}[1]{\xrightarrow{\makebox[1.5em]{\scriptsize ${#1}$}}}

\newcommand{\upalpha}{\hs^\alpha\kern-0.5pt}

\newcommand{\CC}{{\bf C}}
\newcommand{\GG}{{\bf G}}
\newcommand{\HH}{{\bf H}}

\newcommand{\MM}{{\bf M}}

\newcommand{\YY}{{\bf Y}}
\newcommand{\ZZ}{{\bf Z}}

\DeclareMathOperator{\Lie}{Lie}

\newcommand{\sT}{{\mathcal T}}

\newcommand{\Gtil}{{\widetilde{G}}
}

\newcommand{\Gal}{{\rm Gal}}
\newcommand{\Zm}{{\mathcal Z}}

\newcommand{\SmallMatrix}[1]{\text{{\tiny\arraycolsep=0.4\arraycolsep\ensuremath
    {\begin{pmatrix}#1\end{pmatrix}}}}}

\newcommand{\ov}{\overline}

\newcommand{\Ho}{{\mathrm{H}\kern 0.3pt}}
\newcommand{\Zl}{{\mathrm{Z}\kern 0.2pt}}
\newcommand{\Bd}{{\mathrm{B}\kern 0.2pt}}

\newcommand{\id}{{\rm id}}

\newcommand{\Cl}{{\rm Cl}}

\newcommand{\Stab}{{\rm Stab}}
\newcommand{\Aut}{{\rm Aut}}

\newcommand{\diag}{{\rm diag}}

\newcommand{\Nm}{{\mathcal N}}

\newcommand{\upgam}{{\hs^\gamma\hm}}

\newcommand{\Am}{{\mathcal A}}
\newcommand{\Fm}{{\mathcal F}}
\newcommand{\OOm}{{\mathcal O}}

\newcommand{\pbar}{{\ov p}}
\newcommand{\gbar}{{\ov g}}

\newcommand{\ve}{\varepsilon}
\newcommand{\vk}{\varkappa}

\newcommand{\vet}{{\tilde\ve}}

\newcommand{\EEE}{{\sf E}}
\newcommand{\AAA}{{\sf A}}

\newcommand{\PPi}{{\scriptscriptstyle\Pi}}
\newcommand{\rR}{{\scriptscriptstyle\R}}
\newcommand{\cC}{{\scriptscriptstyle{\C}}}
\newcommand{\kK}{{\scriptstyle\K}}

\begin{document}

\title{Classification of real trivectors in dimension nine}

\author{Mikhail  Borovoi}
\address{Borovoi: Raymond and Beverly Sackler School of Mathematical Sciences,
Tel Aviv University, 6997801 Tel Aviv, Israel}
\email{borovoi@tauex.tau.ac.il}

\author{Willem A. de Graaf}
\address{De Graaf: Department of Mathematics, University of Trento, Povo (Trento), Italy}
\email{degraaf@science.unitn.it}

\author{H\^ong V\^an L\^e}
\address{L\^e: Institute of Mathematics, Czech  Academy of Sciences,
Zitna 25, 11567 Praha 1, Czech Republic}
\email{hvle@math.cas.cz}

\thanks{ Borovoi was partially supported
by the Israel Science Foundation (grant 870/16).
De Graaf was partially supported by an Australian Research Council
grant, identifier DP190100317.
Research  of L\^e was supported  by  GA\v CR-project 18-01953J and	 RVO: 67985840.}

\date{\today}

\keywords{Trivector, graded Lie algebra, real Galois cohomology}

\subjclass{Primary:
  15A21.  
Secondary:
  11E72  
, 20G05  
, 20G20.
}

\begin{abstract}
In this paper we classify real trivectors in dimension 9.
The corresponding classification over the field $\C$ of complex numbers was
obtained by Vinberg and Elashvili in 1978.
One of the main tools used for their classification was the construction of the representation
of $\SL(9,\C)$ on the space of complex trivectors of $\C^9$ as a theta-representation
corresponding to a $\Z/3\Z$-grading of the simple complex Lie algebra of type $\EEE_8$.
This divides the trivectors into three groups: nilpotent, semisimple, and mixed trivectors.
Our classification follows the same pattern.
We use Galois cohomology, first and second, to obtain the classification over $\R$.
\end{abstract}

\maketitle
\tableofcontents



\section{Introduction}

Let $V$ be an $n$-dimensional vector space over a field
$\K$. The group $\GL(V)$ naturally acts on the spaces $\bigwedge^m V$.
The elements of $\bigwedge^2 V$ are called bivectors. The
elements of $\bigwedge^3 V$ are called trivectors. If the ground field $\K$ is
$\R$ or $\C$, then the orbits of $\GL(V)$
on the space of bivectors can be listed for all $n$ (see Gurevich \cite{Gurevich1964}, \S 34).
The situation for trivectors is much more complicated and a lot of
effort has gone into finding orbit classifications for particular $n$.
For $n$ up to $5$ it is straightforward to obtain the orbits of
$\GL(V)$ on the space of trivectors (see \cite{Gurevich1964}, \S 35).
The classification for higher $n$ and $\K=\C$ started with the thesis of Reichel
\cite{Reichel}, who classified the orbits for $n=6$ and with a few
omissions also for $n=7$. In 1931 Schouten \cite{Schouten31} published
a classification for $n=7$. In 1935
Gurevich \cite{Gurevich1935a} obtained a classification for $n=8$
(see also \cite{Gurevich1964}, \S 35). In these cases the number of orbits is
finite. This ceases to be the case for $n\geq 9$. Vinberg and Elashvili \cite{VE1978}
classified the orbits of trivectors for $n=9$ under the group
$\SL(V)$. In this classification there are several parametrized
families of orbits. The maximum number of parameters of such a family is 4.

More recently classifications have appeared for different fields.
For $n=6$,
Revoy \cite{Revoy} gave a classification for arbitrary field $\K$.
For $n=7$, Westwick \cite{Westwick} classified the trivectors for $\K=\R$,
Cohen and Helminck \cite{CH1988} treated the case of a perfect field $\K$
of cohomological dimension $\le 1$ (which includes finite fields),
and Noui and Revoy \cite{NR1994} treated the cases of a
perfect field of cohomological dimension $\le 1$ and of a $p$-adic field.
For $n=8$,
Djokovi\'c \cite{Djokovic1983} treated the case $\K=\R$,
Noui \cite{Noui1997} treated the case of an algebraically closed field $\K$
of arbitrary characteristic,
and Midoune and Noui \cite{MN2013} treated the case of a finite field.
For $n=9$,
Hora and Pudl\'ak \cite{HP2020} treated the case of the finite field $\F_2$ of two elements.
In the present paper we give a classification of trivectors under
the action of $\SL(V)$ for $n=9$ and $\K=\R$.

For their classification, Vinberg and Elashvili used a particular construction of
the action of $\SL(V)$ on $\bigwedge^3 V$ ($\dim V=9$, $\K=\C$). They
considered the complex Lie algebra $\g^\cC$ of type $\EEE_8$ and the corresponding
adjoint group $G$ (here $G$ is equal to the automorphism
group of $\g^\cC$). This Lie algebra has a $\Z_3$-grading $\g^\cC = \g_{-1}^\cC
\oplus \g_0^\cC\oplus \g_1^\cC$ such that $\g_0^\cC \cong \ssl(9,\C)$.
Let $G_0$ be the connected algebraic subgroup of $G$ with Lie algebra
$\g_0^\cC$. Since the Lie algebra $\g_0^\cC$ preserves $\g_1^\cC$ when acting on $\g^\cC$,
so does the group $G_0$.
An isomorphism $\psi^\cC\colon \ssl(9,\C)\to \g_0^\cC$ lifts to a surjective morphism of
algebraic groups $\Psi^\cC\colon \SL(9,\C)\to G_0$. This defines an action of
$\SL(9,\C)$ on $\g_1^\cC$, and it turns out that $\g_1^\cC \cong \bigwedge^3 \C^9$
as $\SL(9,\C)$-modules. This construction pertains to Vinberg's
theory of $\theta$-groups (\cite{Vinberg1975}, \cite{Vinberg1976},
\cite{Vinberg1979}). We use this theory to study
the orbits of trivectors when $n=9$. Among the technical tools that this
makes available, we mention the following:
\begin{itemize}
\item The elements of $\g_1^\cC$ have a {\em Jordan decomposition}, that is,
  every $x\in \g_1^\cC$ can be uniquely written as $x=s+n$ where $s\in \g_1^\cC$
  is {\em semisimple} (i.e., its $G_0$-orbit is closed), $n\in \g_1^\cC$ is
  {\em nilpotent} (i.e., the closure of its orbit contains 0) and $[s,n]=0$.
  This naturally splits the orbits into three types according to whether the
  elements of the orbit are nilpotent, semisimple, or mixed (that is,
  neither semisimple nor nilpotent). Thus the classification problem splits
  into three subproblems. We remark that an element $y\in\g_1^\cC$ is semisimple
  (respectively nilpotent) if and only if the linear map $\ad y \colon \g^\cC\to
  \g^\cC$ is semisimple (respectively nilpotent).
\item  Any nilpotent $e\in \g_1^\cC$ lies in a {\em homogeneous $\ssl_2$-triple},
  meaning that there are $h\in \g_0^\cC$, $f\in \g_{-1}^\cC$ with
  $[h,e]=2e$, $[h,f]=-2f$, $[e,f]=h$. The $G_0$-orbits of $e$ and of the
  triple $(h,e,f)$ determine each other.
  Furthermore, $e$ lies in a {\em carrier algebra} and the theory of
  these algebras can be used to classify the nilpotent orbits
  (see Vinberg \cite{Vinberg1979}).
\item  A {\em Cartan subspace} is a maximal subspace $\Cg$  of $\g_1^\cC$
 consisting of commuting semisimple elements.
Any other Cartan subspace is conjugate to $\Cg$ under the action of $G_0$.
 Each semisimple orbit has a
  point in $\Cg$. Furthermore, two elements of $\Cg$ are $G_0$-conjugate
  if and only if they are conjugate under the finite group
  $W\ce \Nm_{G_0}(\Cg)/\Zm_{G_0}(\Cg)$ called the {\em Weyl
  group of the graded Lie algebra} $\g^\cC$, or the {\em little Weyl group.}
\item Fix a semisimple element $s\in\g_1^\cC$, let $\Zm_s$ denote the stabilizer
  of $s$ in $\SL(9,\C)$, and let $\z_{\g^\cC}(s)$ denote the centralizer of $s$ in
  $\g^\cC$. Then the grading of $\g^\cC$ induces a grading
  of its Lie subalgebra  $\z_{\g^\cC}(s)$,
  and  the classification of the orbits of mixed elements $s+e$,
  where $e$ is nilpotent, reduces to the classification of the {\em nilpotent}
  $\Zm_s$-orbits in the graded Lie algebra $\z_{\g^\cC}(s)$.
\end{itemize}

Over the real numbers we use a similar construction of the action of $\SL(V)$
on $\bigwedge^3 V$. There is a Chevalley
basis of $\g^\cC$ such that the $\Z_3$-grading is defined over $\R$ (that is,
the spaces $\g_i^\cC$ for $i=-1,0,1$ all have bases whose elements are
$\R$-linear combinations of the given Chevalley basis of $\g^\cC$).  For
$i=-1,0,1$  we denote by $\g_i$ the real spans of these bases. Then $\g_0$ is isomorphic
to $\ssl(9,\R)$. We can define $\psi^\cC$ in such a way that it restricts to
an isomorphism $\psi \colon \ssl(9,\R)\to \g_0$. Then $\Psi^\cC$ is defined over
$\R$ and restricts to a surjective homomorphism $\Psi \colon \SL(9,\R) \to
G_0(\R)$. In this way $\g_1$ becomes an $\SL(9,\R)$-module isomorphic to
$\bigwedge^3 \R^9$. In Section \ref{sec:gradedLie} we give details of this
construction as well as a summary of a number of results that are useful for the
classification of nilpotent and semisimple elements.

As in the earlier works on classification of trivectors
\cite{Revoy} and \cite{Djokovic1983},
our main workhorse for classifying the $\SL(9,\R)$-orbits on
$\bigwedge^3 \R^9$ is Galois cohomology. Very briefly this amounts to the
following. Let $\Oo$ be an $\SL(9,\C)$-orbit in $\bigwedge^3 \C^9$. We are
interested in determining the $\SL(9,\R)$-orbits that are contained in
$\Oo\cap\bigwedge^3 \R^9$. It can happen that this intersection is empty
(we say that $\Oo$ has no real points). In that case we discard
$\Oo$. On the other hand, if we know a real point $x$ of $\Oo$,
then we consider the stabilizer $\Zm_x$ of $x$ in $\SL(9,\C)$. The
$\SL(9,\R)$-orbits contained in $\Oo$ are in bijection with the first Galois
cohomology set $\Ho^1 \Zm_x\ce \Ho^1(\R, \Zm_x)$.
Moreover, from an explicit cocycle representing  a cohomology class in
$\Ho^1 \Zm_x$\hs, we can effectively compute
a representative of the corresponding real orbit.
Section \ref{sec:galcohom} has further details on Galois
cohomology over $\R$.

In Section \ref{sec:methods} we describe the methods that we have developed
to obtain the classification over $\R$.
For the nilpotent $\SL(9,\R)$-orbits we calculate as described above.
From the explicit representatives in \cite{VE1978}, it follows
immediately that every nilpotent $\SL(9,\C)$-orbit in
$\bigwedge^3 \C^9$ has a real point.
However, instead of a real {\em point} $e$ in the orbit, we work with a real homogeneous
{\em $\ssl_2$-triple} $t=(h,e,f)$ containing it. Since nilpotent orbits correspond
bijectively to orbits of homogeneous $\ssl_2$-triples, it suffices to
compute the stabilizer $\Zm_t$ of the triple $t$ and to determine the set
$\Ho^1 \Zm_t$.

In \cite{VE1978} an explicit Cartan subspace $\Cg$ of $\g_1^\cC$ is
given. In $\Cg$ seven {\em canonical sets} are defined
with the property that every semisimple orbit has a point in one of the
canonical sets. Moreover, two elements of the same canonical set have
the same stabilizer in $\SL(9,\C)$. Let $\Fm$ be such a canonical set.
Let $\Nm$ be its normalizer in $\SL(9,\C)$ ($\Nm$ is the set of $g\in
\SL(9,\C)$ with $gp\in \Fm$ for all $p\in \Fm$) and let $\Zm$ be its
centralizer in $\SL(9,\C)$ ($\Zm$ is the set of all $g\in \SL(9,\C)$ with
$gp=p$ for all $p\in \Fm$). Write $\Aa=\Nm/\Zm$. In Section \ref{sec:semsim}
we show how to divide the complex orbits of the elements of $\Fm$ having real
points into several classes that are in bijection with the Galois cohomology
set $\Ho^1 \Aa$.
For each class we can explicitly find a real representative
of each orbit in this class.
Let $p$ be such a real representative, and let $\Zm_p$ be its centralizer
in $\SL(9,\C)$. Then the real orbits contained in the $\SL(9,\C)$-orbit of
$p$ are in bijection with $\Ho^1 \Zm_p$.

Section \ref{subs:mixed} is devoted to the elements of mixed type.
To classify the orbits consisting of
mixed elements, we fix a real semisimple element $p$ and
consider the problem of listing the $\SL(9,\R)$-orbits of mixed elements
having a representative of the form $p+e$ where $e$ is nilpotent and
$[p,e]=0$. Let $\Zm_p(\R)$ denote the stabilizer of $p$ in $\SL(9,\R)$.
Let $\a=\z_{\g}(p)$ be the centralizer of $p$ in $\g$. Note that the grading
of $\g$ induces a grading on $\a$. Then our problem amounts to classifying the
nilpotent $\Zm_p(\R)$-orbits in $\a_1$. In principle this can be done with
Galois cohomology in the same way as for the nilpotent $\SL(9,\R)$-orbits
in $\g_1$. In this case, however, there is one extra problem. If the reductive
subalgebra $\a$ is not split over $\R$, then it can happen that certain complex
nilpotent orbits in $\a_1^\cC$ (where $\a^\cC=\z_{\g^\cC}(p)$\hs) do not have
real points. In Section \ref{subs:mixed} we develop a method for checking
whether a complex nilpotent orbit has real points and for finding one in
the affirmative case. Among other things, this method uses calculations
with {\em second} (abelian) Galois cohomology.

The main results of this paper are the tables in Section \ref{sec:tables}
containing representatives of the orbits of $\SL(9,\R)$ in $\bigwedge^3 \R^9$.

In order to obtain the results of this paper, essential use of the computer
has been made. In \cite[Section 5]{BGL2021} we give details of the
computational methods that we used. Here we just mention that
we have used the computer algebra system {\sf GAP}4 \cite{gap4}
and its package {\sf SLA}  \cite{sla}. For the computations involving Gr\"obner
bases we have used the computer algebra system {\sf Singular} \cite{DGPS}.

In order to write a paper of reasonable length, we have omitted the details
of our computations, focussing instead on the theoretical background and the
methods that we have developed. The computations are described in detail
in our paper \cite{BGL2021}. That paper contains also much more material on
Galois cohomology and graded Lie algebras.

Finally we would like to  add   a few   more  motivations
for the problem of classification   of trivectors of $\R^9$
 and  put our method  in a broader  perspective.
Special  geometries  defined  by   differential  forms   are of   central  significance
in Riemannian geometry and in physics; see
\cite{HL1982},   \cite{Hitchin2000}, \cite{Joyce2007};  see also \cite{LV2020} for a survey.
One  of  important problems  in geometry  defined  by  differential forms  is to  understand  the orbit space
of the standard $\GL (n, \R)$-action on the vector space $\bigwedge^k (\R^n)^*$ of alternating $k$-forms on $\R^n$,
which is  in  a bijection   with the orbit space
of  the standard $\GL (n, \R)$-action on the space of $k$-vectors  $\bigwedge^k \R^{n}$.
For general  $k$ and $n$, this classification problem is intractable.
In the  case of trivectors of $\R^8$ and  $\R^9$,
this classification problem is  tractable  thanks  to  its formulation
as an  equivalent problem  for   graded  semisimple Lie algebras,
discovered  first  by Vinberg  in his  works \cite{Vinberg1975}, \cite{Vinberg1976}.
Semisimple graded Lie algebras and their  adjoint orbits
 play an important role  in geometry  and supersymmetries; see
 \cite{WG1968I}, \cite{WG1968II}, \cite{Nahm1978}, \cite{Kac1995}, \cite{CS2009}, \cite{Swann1991}.
\bigskip

{\bf Acknowledgements.}
We  are indebted   to Alexander Elashvili for his discussions with us
on his work  with Vinberg \cite{VE1978} and related  problems
in graded   semisimple Lie algebras over years;
it was he who brought  us together  for working on this paper. We  are  grateful  to
Domenico Fiorenza for his helpful comments on an early version  of this paper.

\subsection{Notation  and conventions}

Here we briefly describe some notation and conventions that we use in the
paper.

By $\Z$, $\Q$, $\R$, $\C$ we denote, respectively, the ring of integers
and the fields of rational, real, and complex numbers.
By $i\in \C$ we denote an imaginary unit.

By $\mu_n$ we denote the cyclic group consisting of the $n$-th roots of
unity in $\C$. Occasionally we denote  by $\mu_n$ the set of matrices consisting
of scalar matrices with an $n$-th root of unity on the diagonal. From the
context it will be clear what is meant.

If $G$ is an algebraic group and  $A$  is a subset in the Lie algebra $\g$
of $G$ then by $\Nm_G(A)$, $\Zm_G(A)$ we denote the normalizer
and centralizer in $G$ of $A$.
If $v$ is an element of a $G$-module, then
$\Zm_G(v)$ denotes the stabilizer of $v$ in $G$.
Furthermore, $\z_\g(A)$ denotes the centralizer of $A$ in $\g$.

We write $\Gamma$ for the group $\Gal(\C/\R)=\{1,\gamma\}$, where $\gamma$ is
the complex conjugation. By a $\Gamma$-group we mean a  group $A$ on which
$\Gamma$ acts by automorphisms. Clearly, a $\Gamma$-group is a pair
$(A,\sigma)$,where $A$ is a group and  $\sigma\colon A\to A$ is an
automorphism of $A$ such that $\sigma^2={\rm id}_A$\hs.
If $a\in A$, we write $\upgam a$  for $\sigma(a)$.

By a {\em linear algebraic group over $\R$} (for brevity $\R$-group) we mean a pair $\MM=(M,\sigma)$,
where $M$ is a linear algebraic group over $\C$, and $\sigma$ is an anti-regular involution of $M$;
see Subsection \ref{sss:real-group}.
Then $\Gamma$ naturally acts on $M$ (namely, the complex conjugation $\gamma$
acts as $\sigma$). In other words, $M$ naturally is a $\Gamma$-group.

In this paper the spaces $\bigwedge^3 \C^9$ and $\bigwedge^3 \R^9$ play
a major role. We denote by  $e_1,\ldots,e_9$ the standard basis of
$\C^9$ (or of $\R^9$). Then by $e_{ijk}$ we denote the basis vector
$e_i\wedge e_j\wedge e_k$ of $\bigwedge^3 \C^9$ (or of $\bigwedge^3 \R^9$).


\section{The tables}\label{sec:tables}

In this section we list the representatives of the orbits of $\SL(9,\R)$ in
$\bigwedge^3 \R^9$. Except for the semisimple orbits, we list the representatives
in tables. We also give various centralizers in the real Lie algebra
$\ssl(9,\R)$. These centralizers are always reductive. For the semisimple
subalgebras we use standard notation for the real forms of the complex
simple Lie algebras.
By $\ttt$ we denote a 1-dimensional subalgebra of $\ssl(9,\R)$
spanned by a semisimple  matrix all of whose eigenvalues are real. By $\uuu$ we denote a
1-dimensional subalgebra of $\ssl(9,\R)$ spanned by a semisimple  matrix all of whose
eigenvalues are imaginary.

In the sequel we occasionally write about elements without specifying the
set to which they belong and also about orbits without further specification.
By an element we usually mean an element of $\bigwedge^3 \C^9$.
By a  real element we mean an element of $\bigwedge^3 \R^9$.
By a complex orbit we mean an orbit of  $\SL(9,\C)$ in $\bigwedge^3 \C^9$,
and by a real orbit we mean an orbit  of $\SL(9,\R)$ in $\bigwedge^3 \R^9$.
We say that an orbit (real or complex) is nilpotent or semisimple or
of mixed type, if its elements are of the given type.

\subsection{The nilpotent orbits}

Table \ref{tab:orbitreps} contains representatives of the nilpotent orbits of
$\SL(9,\R)$ on
$\bigwedge^3 \R^9$. In the first column we list the number of the complex orbit
as contained in Table 6 of \cite{VE1978}. The second column has the
characteristic of the complex orbit (see Remark \ref{rem:char}).
In the third column we give the dimension $d$ of the
complex orbit. The fourth column lists the
rank $d_s$ of a trivector $e$ in the complex orbit, that is,
the minimal dimension of a subspace $U$ of $\C^9$ such that
$e\in\bigwedge^3 U$. Let $t=(e,h,f)$ be a homogeneous $\ssl_2$-triple
containing a representative $e$ of the complex orbit. Let $\Zm_{\SL(9,\C)}(t)$ denote
the stabilizer in $\SL(9,\C)$ of $t$. The fifth and sixth columns have,
respectively, the size of the component group $K$ of $\Zm_{\SL(9,\C)}(t)$ and
a description of the structure of $K$. We omit the latter
when the component group  has order 1 or prime order. The seventh column has representatives
of all $\SL(9,\R)$-orbits contained in the complex orbit. The last column
has a description of the structure of the centralizer in $\ssl(9,\R)$ of a real
homogeneous $\ssl_2$-triple $t=(e,h,f)$ containing the real representative $e$
in the same line.

\smallskip

\begin{longtable}{|r|l|l|l|l|l|l|l|}
\caption{Real nilpotent orbits}\label{tab:orbitreps}
\endfirsthead
\hline
\endhead
\hline
\endfoot
\endlastfoot

\hline
No. &\quad Char.  & \f{d}  & \f{d_s} &\f{|K|} &\ \ \f{K} & \qquad\quad Representatives & $\mathfrak{z}(t)$ \\
\hline

\gr
1 & {\tiny 6 6 6 6 6 6 6 12 }& {\tiny 80} & \y9 & \y3   &
  &${\scriptstyle e_{126}+e_{135}-e_{234}+e_{279}+e_{369}+e_{459}+e_{478}+e_{568}}$
& ${\scriptstyle 0}$  \\
2 & {\tiny 6 6 6 0 6 6 6 6 }& {\tiny 79} & \y9 & \y3   &
  &${\scriptstyle e_{126}+e_{145}-e_{234}+e_{279}+e_{369}-e_{378}+e_{478}+e_{568}}$
& ${\scriptstyle 0}$  \\
\gr
3 & {\tiny 6 6 6 0 6 0 6 6 }& {\tiny 78} & \y9 & \y3   &
  &${\scriptstyle e_{126}+e_{145}-e_{235}+e_{279}+e_{369}-e_{378}+e_{478}+e_{568}}$
&  ${\scriptstyle 0}$  \\
4 & {\tiny 6 0 6 0 6 6 0 6 }& {\tiny 78} & \y9 & \y6   & \f{\mu_6}
  &${\scriptstyle e_{127}+e_{145}-e_{234}+e_{279}+e_{369}-e_{378}+e_{478}+e_{568}}$
  &  ${\scriptstyle 0}$ \\
  & & &    & &
  &${\scriptstyle e_{127}-e_{135}-e_{146}+e_{234}-e_{369}+e_{378}+e_{459}+e_{568}}$
& ${\scriptstyle 0}$  \\
\gr
5 & {\tiny 0 6 0 6 0 6 0 6 }& {\tiny 77} & \y9 & \y6   & \f{\mu_6}
  &${\scriptstyle e_{127}+e_{145}-e_{235}+e_{279}+e_{369}-e_{468}+e_{478}+e_{568}}$
&  ${\scriptstyle 0}$  \\
\gr
  & & &    & &
  &${\scriptstyle 2e_{126}+2e_{137}+2e_{235}+2e_{279}-2e_{369}+2e_{459}+2e_{468}+e_{578}}$
& ${\scriptstyle 0}$  \\
6 & {\tiny 6 1 5 1 5 6 1 5 }& {\tiny 77} & \y9 & \y1   &
  &${\scriptstyle e_{136}+e_{145}-e_{234}+e_{279}-e_{378}+e_{469}+e_{568}}$
&  ${\scriptstyle \ttt }$ \\
\gr
7& {\tiny 0 6 0 0 6 0 6 0 }& {\tiny 76} & \y9 & \y6   & \f{\mu_6}
  &${\scriptstyle e_{127}+e_{145}-e_{235}+e_{368}-e_{378}-e_{468}+e_{469}+e_{579}}$
& ${\scriptstyle 0}$  \\
\gr
  & & &    & &
  &${\scriptstyle 2e_{127}+2e_{134}-2e_{245}+2e_{368}+\frac12 e_{379}+e_{479}+e_{569}-e_{578}}$
& ${\scriptstyle 0}$  \\
8 & {\tiny 6 0 6 0 6 0 0 6 }& {\tiny 76} & \y9 & \y6   & \f{\mu_6}
  &${\scriptstyle e_{136}+e_{145}-e_{235}+e_{279}-e_{378}+e_{469}+e_{478}+e_{568}}$
  &   ${\scriptstyle 0}$ \\
  & & &    & &
  &${\scriptstyle -e_{135}+e_{146}-e_{179}+e_{236}-e_{245}+e_{369}+e_{378}+e_{459}+e_{568}}$
& ${\scriptstyle 0}$  \\
\gr
9 & {\tiny 0 0 6 0 0 6 0 6 }& {\tiny 76} & \y9 & \y{18}   & \f{\mu_3\times S_3}
  &${\scriptstyle e_{127}+e_{145}-e_{234}+e_{279}+e_{369}+e_{568}-e_{578}+e_{678}}$
& ${\scriptstyle 0}$    \\
\gr
  & & &    & &
  &${\scriptstyle e_{125}+e_{136}-e_{147}+2e_{234}-2e_{279}-2e_{459}+2e_{568}-2e_{578}}$
&   ${\scriptstyle 0}$  \\
10 & {\tiny 2 2 2 2 2 4 2 2 }& {\tiny 75} & \y9 & \y9   & \f{\mu_3\times \mu_3}
  &${\scriptstyle e_{127}+e_{136}+e_{145}-e_{234}+e_{379}+e_{469}+e_{478}+e_{568}}$
& ${\scriptstyle 0}$  \\
\gr
11 & {\tiny 6 1 5 1 5 0 1 5 }& {\tiny 75} & \y9 & \y1   &
  &${\scriptstyle e_{136}+e_{145}-e_{236}+e_{279}-e_{378}+e_{469}+e_{568}}$
&  ${\scriptstyle \ttt }$ \\
12 & {\tiny 0 1 5 0 1 5 1 5 }& {\tiny 75} & \y9 & \y1   &
  &${\scriptstyle e_{127}+e_{145}-e_{234}+e_{369}+e_{479}+e_{568}-e_{578}}$
&  ${\scriptstyle \ttt }$ \\
\gr
13 & {\tiny 1 5 1 1 4 1 5 1 }& {\tiny 75} & \y9 & \y1   &
  &${\scriptstyle e_{126}+e_{145}-e_{235}+e_{379}+e_{469}+e_{478}+e_{568}}$
&  ${\scriptstyle \ttt }$ \\
14 & {\tiny 2 2 2 2 2 2 2 2 }& {\tiny 74} & \y9 & \y9   & \f{\mu_9}
  &${\scriptstyle e_{127}+e_{136}+e_{145}-e_{235}+e_{379}+e_{469}+e_{478}+e_{568}}$
& ${\scriptstyle 0}$  \\
\gr
15 & {\tiny 6 0 0 0 6 0 0 6 }& {\tiny 74}   & \y9 & \y6   & \f{\mu_6}
  &${\scriptstyle e_{136}+e_{145}-e_{235}+e_{279}+e_{469}+e_{478}-e_{578}}$
  & ${\scriptstyle \ttt }$  \\
\gr
  & & &   & &
  &${\scriptstyle e_{134}-e_{156}-e_{179}+e_{235}-e_{246}-e_{369}+e_{378}+e_{459}}$
  & ${\scriptstyle \uuu }$  \\
\gr
 & & &   & &
 &${\scriptstyle e_{134}+e_{156}-e_{179}+e_{235}-e_{246}+e_{369}+e_{378}+e_{459}}$
& ${\scriptstyle \uuu }$  \\
16  & {\tiny 1 1 4 1 1 5 1 4 }& {\tiny 74} & \y9 & \y3   &
  &${\scriptstyle e_{127}+e_{136}+e_{145}-e_{234}+e_{379}+e_{469}+e_{568}}$
&  ${\scriptstyle \ttt }$ \\
\gr
17 & {\tiny 0 6 0 0 0 6 0 0 }& {\tiny 73} & \y9 & \y{18 } & \f{\mu_3\times S_3}
  &${\scriptstyle e_{137}+e_{146}-e_{245}-e_{268}+e_{278}+e_{368}+e_{479}+e_{569}}$
  &  ${\scriptstyle 0}$ \\
\gr
  & & &    & &
  &${\scriptstyle -e_{126}-e_{147}+e_{279}+2e_{345}-e_{368}-e_{469}+4e_{569}+e_{578}}$
& ${\scriptstyle 0}$  \\
18  & {\tiny 2 2 0 2 2 2 2 2 }& {\tiny 73} & \y9 & \y9   & \f{\mu_3\times\mu_3}
  &${\scriptstyle e_{127}+e_{136}+e_{145}-e_{235}+e_{379}+e_{469}+e_{478}-e_{578}}$
&  ${\scriptstyle 0}$ \\
\gr
19 & {\tiny 6 0 1 0 5 0 1 5 }& {\tiny 73} & \y9 & \y2   &
  &${\scriptstyle e_{136}+e_{145}-e_{236}+e_{279}-e_{378}+e_{478}+e_{569}}$
&  ${\scriptstyle \ttt }$ \\
\gr
  & & &   & &
  &${\scriptstyle 2e_{135}-2e_{146}-e_{236}-e_{245}+e_{279}+2e_{378}+2e_{569}}$
& ${\scriptstyle \ttt }$  \\
20 & {\tiny 3 0 3 3 0 6 0 3 }& {\tiny 73} & \y9 & \y3   &
  &${\scriptstyle e_{127}+e_{145}-e_{234}+e_{379}+e_{469}+e_{478}+e_{568}}$
& ${\scriptstyle \ssl(2,\R)}$  \\
\gr
21 & {\tiny 1 4 1 1 1 3 1 1 }& {\tiny 72} & \y9 & \y1   &
  &${\scriptstyle e_{127}+e_{136}-e_{245}+e_{379}+e_{469}+e_{478}+e_{568}}$
& ${\scriptstyle \ttt }$  \\
22 & {\tiny 1 4 0 1 1 4 1 1 }& {\tiny 72} & \y9 & \y3   &
  &${\scriptstyle e_{127}+e_{136}-e_{235}+e_{379}+e_{469}+e_{478}-e_{578}}$
& ${\scriptstyle \ttt }$  \\
\gr
23 & {\tiny 0 3 0 3 0 3 3 0 }& {\tiny 72} & \y9 & \y6   & \f{\mu_6}
  &${\scriptstyle e_{127}+e_{136}+e_{145}-e_{235}-e_{468}+e_{479}+e_{568}}$
&  ${\scriptstyle \ttt }$ \\
\gr
  & & &    & &
  &${\scriptstyle -2e_{126}+2e_{137}+e_{145}-2e_{234}-e_{468}-e_{479}+2e_{569}-2e_{578}}$
& ${\scriptstyle \uuu }$  \\
24 & {\tiny 0 0 6 0 0 0 0 6 }& {\tiny 72} & \y9 & \y6   & \f{\mu_6}
  &${\scriptstyle e_{127}+e_{136}+e_{145}-e_{235}+e_{379}+e_{469}-e_{578}}$
  &  ${\scriptstyle \ttt }$ \\
  & & &    & &
  &${\scriptstyle -e_{125}-e_{137}+e_{146}-e_{179}+e_{247}+e_{269}+e_{345}+e_{459}+e_{578}}$
  & ${\scriptstyle \uuu }$  \\
  & & &   & &
  &${\scriptstyle -e_{125}+e_{137}+e_{146}+e_{179}+e_{247}+e_{269}-e_{345}-e_{459}+e_{578}}$
  & ${\scriptstyle \uuu }$  \\
\gr
25 & {\tiny 2 0 4 2 0 6 0 4 }& {\tiny 72} & \y9 & \y1   &
  &${\scriptstyle e_{127}+e_{145}-e_{234}+e_{379}+e_{469}+e_{568}}$
&  ${\scriptstyle \ssl(2,\R) + \ttt }$ \\
26 & {\tiny 3 0 3 0 3 0 3 0 }& {\tiny 71} & \y9 & \y6   & \f{\mu_6}
  &${\scriptstyle e_{127}+e_{136}-e_{245}-e_{378}+e_{479}+e_{568}+e_{569}}$
  & ${\scriptstyle \ttt }$  \\
  & & &    & &
  &${\scriptstyle e_{127}-e_{136}-e_{145}+e_{235}-e_{246}+e_{378}-e_{479}-2e_{568}}$
& ${\scriptstyle \uuu }$  \\
\gr
27 & {\tiny 0 1 5 0 0 1 0 5 }& {\tiny 71} & \y9 & \y2   &
  &${\scriptstyle e_{127}+e_{136}-e_{245}+e_{379}+e_{469}+e_{568}-e_{578}}$
& ${\scriptstyle \ttt }$  \\
\gr
  & & &   & &
  &${\scriptstyle e_{126}+e_{136}+e_{147}+e_{279}+e_{345}-e_{379}+e_{469}-e_{578}}$
& ${\scriptstyle \ttt }$  \\
28 & {\tiny 1 1 2 1 1 1 1 4 }& {\tiny 71} & \y9 & \y1   &
  &${\scriptstyle e_{127}+e_{136}+e_{145}-e_{235}+e_{379}+e_{469}+e_{678}}$
& ${\scriptstyle \ttt }$  \\
\gr
29 & {\tiny 0 4 0 2 0 4 2 0 }& {\tiny 71} & \y9 & \y2   &
  &${\scriptstyle e_{127}+e_{136}-e_{235}-e_{468}+e_{479}+e_{568}}$
& ${\scriptstyle 2\ttt }$  \\
\gr
  & & &   & &
  &${\scriptstyle -2e_{126}+2e_{137}+2e_{235}+2e_{468}-2e_{479}-e_{569}-e_{578}}$
& ${\scriptstyle \ttt +\uuu }$  \\
30 & {\tiny 6 1 0 1 4 1 0 5 }& {\tiny 71} & \y9 & \y1   &
  &${\scriptstyle e_{146}-e_{179}-e_{236}-e_{245}-e_{378}+e_{569}}$
& ${\scriptstyle \ssl(2,\R) + \ttt }$  \\
\gr
31 & {\tiny 0 0 0 0 6 0 0 0 }& {\tiny 70} & \y9 &\y{360} & \y{\mu_3\times S_5}
  &${\scriptstyle e_{137}-e_{246}-e_{247}+e_{348}-e_{358}+e_{368}+e_{458}+e_{569}}$
& ${\scriptstyle 0}$  \\
\gr
  & & &    & &
  &${\scriptstyle -e_{134}-2e_{167}+e_{245}-2e_{358}+2e_{368}-2e_{378}+2e_{379}-e_{458}+e_{469}}$
&  ${\scriptstyle 0}$ \\
\gr
  & & &      & &
  &${\scriptstyle -e_{135}-e_{245}-2e_{267}+2e_{368}+e_{378}-2e_{468}-2e_{479}-e_{569}}$
&  ${\scriptstyle 0}$ \\
32 & {\tiny 2 2 2 2 0 2 0 2 }& {\tiny 70} & \y9 & \y3   &
  &${\scriptstyle e_{127}+e_{146}-e_{236}-e_{245}+e_{379}+e_{478}+e_{568}}$
&   ${\scriptstyle \ttt }$ \\
\gr
33 & {\tiny 1 0 5 0 1 0 1 4 }& {\tiny 70} & \y9 & \y2   &
  &${\scriptstyle e_{127}+e_{136}-e_{245}+e_{379}+e_{479}+e_{568}}$
& ${\scriptstyle 2\ttt }$  \\
\gr
  & & &   & &
  &${\scriptstyle -e_{127}+e_{135}-e_{146}-e_{236}-e_{245}+e_{379}+e_{568}}$
& ${\scriptstyle \ttt +\uuu}$  \\
34 & {\tiny 2 0 2 2 0 2 0 4 }& {\tiny 70} & \y9 & \y1   &
  &${\scriptstyle e_{127}+e_{145}-e_{236}+e_{379}+e_{469}-e_{578}}$
& ${\scriptstyle 2\ttt }$  \\
\gr
35 & {\tiny 2 0 2 0 2 0 2 2 }& {\tiny 69} & \y9 & \y{18}   & \f{\mu_6\times\mu_3}
  &${\scriptstyle e_{127}+e_{136}-e_{245}+e_{379}+e_{479}+e_{569}-e_{578}+e_{678}}$
&  ${\scriptstyle 0}$ \\
\gr
  & & &    & &
  &${\scriptstyle -e_{127}+e_{135}+e_{146}-e_{236}+e_{245}-e_{379}+e_{569}-e_{578}}$
&  ${\scriptstyle 0}$ \\
36 & {\tiny 0 0 1 0 5 0 0 1 }& {\tiny 69} & \y9 &  \y{6}   & \f{S_3}
  &${\scriptstyle e_{136}-e_{245}+e_{379}+e_{479}+e_{568}-e_{578}+e_{678}}$
  & ${\scriptstyle \ttt}$  \\
  & & &    & &
  &${\scriptstyle -e_{135}+e_{147}+e_{236}+e_{379}+e_{459}-e_{578}+e_{678}}$
& ${\scriptstyle \ttt}$  \\
\gr
37  & {\tiny 1 1 4 1 0 1 0 4 }& {\tiny 69} & \y9 & \y1   &
  &${\scriptstyle e_{127}+e_{146}-e_{236}-e_{245}+e_{379}+e_{568}}$
&  ${\scriptstyle 2\ttt }$ \\
38 & {\tiny 2 1 1 1 1 1 1 2 }& {\tiny 68} & \y9 & \y3   &
  &${\scriptstyle e_{127}+e_{146}-e_{236}-e_{245}+e_{379}+e_{569}-e_{578}}$
&  ${\scriptstyle \ttt }$ \\
\gr
39 & {\tiny 1 0 1 0 4 0 1 1 }& {\tiny 68} & \y9 & \y6   & \f{\mu_6}
  &${\scriptstyle e_{136}-e_{245}+e_{379}+e_{479}+e_{569}-e_{578}+e_{678}}$
& ${\scriptstyle \ttt }$  \\
\gr
  & & &    & &
  &${\scriptstyle e_{135}-e_{146}+e_{236}+e_{245}+e_{479}-e_{569}+e_{578}}$
&  ${\scriptstyle \ttt }$ \\
40  & {\tiny 0 1 0 1 4 0 1 0 }& {\tiny 68} & \y9 & \y2   &
  &${\scriptstyle e_{137}-e_{236}-e_{245}-e_{468}+e_{478}+e_{569}}$
  &  ${\scriptstyle 2\ttt}$ \\
  & & &   & &
  &${\scriptstyle -e_{136}+e_{237}-2e_{245}-e_{468}+2e_{479}-e_{569}-2e_{578}}$
&   ${\scriptstyle \ttt + \uuu}$\\
\gr
41 & {\tiny 1 0 1 1 2 1 1 1 }& {\tiny 67} & \y9 & \y3   &
  &${\scriptstyle e_{137}+e_{145}-e_{236}+e_{479}+e_{569}-e_{578}+e_{678}}$
& ${\scriptstyle \ttt }$  \\
42 & {\tiny 0 3 0 0 0 3 0 3 }& {\tiny 67} & \y9 & \y3   &
  &${\scriptstyle e_{127}+e_{136}-e_{245}+e_{379}+e_{479}+e_{569}+e_{678}}$
&  ${\scriptstyle \ssl(2,\R)}$ \\
\gr
43 & {\tiny 3 0 0 0 3 0 3 0 }& {\tiny 67} & \y9 & \y6   & \f{\mu_6}
  &${\scriptstyle e_{127}+e_{136}-e_{245}-e_{378}+e_{478}+e_{579}+e_{679}}$
&  ${\scriptstyle \ssl(2,\R)}$ \\
\gr
  & & &    & &
  &   ${\scriptstyle e_{127}-e_{134}-e_{156}-e_{236}-e_{245}+e_{578}-e_{679}}$
& ${\scriptstyle \ssl(2,\R)}$  \\
44 & {\tiny 1 1 0 1 3 1 0 1 }& {\tiny 67} &  \y9 & \y1   &
  &${\scriptstyle e_{137}+e_{146}-e_{236}-e_{245}+e_{478}+e_{569}}$
& ${\scriptstyle 2\ttt}$  \\
\gr
45 & {\tiny 3 0 3 3 0 0 0 3 }& {\tiny 67} & \y9 & \y3   &
  &${\scriptstyle e_{127}+e_{146}-e_{245}+e_{379}+e_{478}+e_{568}}$
& ${\scriptstyle 2\hs\ssl(2,\R) }$  \\
46 & {\tiny 1 1 1 1 1 1 1 1 }& {\tiny 66} & \y9 & \y1   &
  &${\scriptstyle e_{137}+e_{146}-e_{236}-e_{245}+e_{479}+e_{569}-e_{578}}$
&  ${\scriptstyle \ttt}$ \\
\gr
47 & {\tiny 0 2 0 0 2 2 0 2 }& {\tiny 66} & \y9 & \y2   &
  &${\scriptstyle e_{136}+e_{147}-e_{245}+e_{379}+e_{569}+e_{678}}$
  & ${\scriptstyle 2\ttt }$  \\
\gr
  & & &   & &
  &${\scriptstyle -e_{136}-e_{147}+e_{157}-e_{235}-e_{379}-e_{469}-e_{569}-e_{678}}$
& ${\scriptstyle \ttt +\uuu }$  \\
48 & {\tiny 2 0 0 0 4 0 2 0 }& {\tiny 66} & \y9 & \y2   &
  &${\scriptstyle e_{136}-e_{245}-e_{378}+e_{478}+e_{579}+e_{679}}$
  & ${\scriptstyle \ssl(2,\R) + \ttt }$  \\
  & & &   & &
  &${\scriptstyle e_{134}-e_{156}-e_{236}+e_{245}+e_{578}-e_{679}}$
& ${\scriptstyle \ssl(2,\R) + \ttt }$  \\
\gr
49 & {\tiny 3 0 1 0 2 1 2 0 }& {\tiny 66} & \y9 & \y1   &
  &${\scriptstyle e_{127}+e_{156}-e_{236}-e_{245}-e_{378}+e_{479}}$
&  ${\scriptstyle \ssl(2,\R) + \ttt }$ \\
50 & {\tiny 2 0 4 2 0 0 0 4 }& {\tiny 66} & \y9 & \y1   &
  &${\scriptstyle e_{127}+e_{146}-e_{245}+e_{379}+e_{568}}$
& ${\scriptstyle 2\hs\ssl(2,\R) + \ttt }$  \\
\gr
51 & {\tiny 1 1 1 1 0 1 1 2 }& {\tiny 65} & \y9 & \y3   &
  &${\scriptstyle e_{137}+e_{146}-e_{236}-e_{245}+e_{479}+e_{569}+e_{678}}$
&  ${\scriptstyle \ttt}$ \\
52 & {\tiny 1 1 0 1 1 2 1 1 }& {\tiny 65} & \y9 & \y1   &
  &${\scriptstyle e_{137}+e_{146}-e_{235}+e_{479}+e_{579}+e_{678}}$
& ${\scriptstyle 2\ttt }$  \\
\gr
53 & {\tiny 2 0 1 0 3 1 1 0 }& {\tiny 65} & \y9 & \y1   &
  &${\scriptstyle e_{156}-e_{236}-e_{245}-e_{378}+e_{479}}$
& ${\scriptstyle \ssl(2,\R) + 2\ttt}$  \\
54 & {\tiny 0 3 0 0 3 0 0 0 }& {\tiny 64} & \y9 & \y3 &
  &${\scriptstyle e_{147}+e_{156}-e_{237}-e_{246}-e_{345}+e_{368}+e_{579}}$
&  ${\scriptstyle \ssl(2,\R)}$ \\
\gr
55 & {\tiny 2 0 2 0 0 2 0 2 }& {\tiny 64} & \y9 & \y1 &
  &${\scriptstyle e_{137}+e_{156}-e_{236}-e_{245}+e_{479}-e_{578}}$
&  ${\scriptstyle 2\ttt}$ \\
56 & {\tiny 0 0 0 3 0 3 0 0 }& {\tiny 64} & \y9 & \y3  &
  &${\scriptstyle e_{146}+e_{157}-e_{237}+e_{458}+e_{478}+e_{569}}$
  &  ${\scriptstyle 2\hs\ssl(2,\R)}$ \\
  & & &  & &
  &${\scriptstyle e_{145}+2e_{167}-e_{235}-2e_{469}+2e_{478}+e_{568}+e_{579}}$
  & ${\scriptstyle \ssl(2,\R)+\su(2)}$  \\
  \gr
57 & {\tiny 0 0 0 0 0 0 0 6 }& {\tiny 64} & \y8 &\y6 & \f{\mu_6}
  &${\scriptstyle e_{127}+e_{136}-e_{245}+e_{379}+e_{479}+e_{569}}$
  & ${\scriptstyle \ssl(3,\R)}$  \\
  \gr
  & & &    & &
  &${\scriptstyle e_{123}-e_{146}+e_{179}+2e_{247}+2e_{259}+2e_{357}+2e_{369}}$
  & ${\scriptstyle \su(1,2)}$  \\
  \gr
  & & &    & &
  &${\scriptstyle e_{123}-e_{156}+e_{179}+e_{247}+e_{269}+e_{349}+e_{359}+e_{367}}$
  &  ${\scriptstyle \su(3)}$ \\
58 & {\tiny 1 1 1 1 1 0 1 1 }& {\tiny 63} & \y9 &\y1 &
  &${\scriptstyle e_{137}-e_{246}-e_{345}+e_{479}+e_{569}-e_{578}}$
& ${\scriptstyle 2\ttt}$  \\
\gr
59 & {\tiny 0 4 0 0 2 0 0 0 }& {\tiny 63} & \y9 & \y1 &
  &${\scriptstyle e_{147}+e_{156}-e_{237}-e_{246}+e_{368}+e_{579}}$
&  ${\scriptstyle \ssl(2,\R) + \ttt}$ \\
60 & {\tiny 0 0 3 0 0 0 3 0 }& {\tiny 63} & \y9 & \y{6} &  \f{\mu_6}
  &${\scriptstyle e_{137}+e_{146}-e_{236}-e_{245}+e_{568}+e_{679}}$
  & ${\scriptstyle \ssl(2,\R) + \ttt}$  \\
  & & &     & &
  &${\scriptstyle -e_{136}-e_{145}-e_{147}+e_{235}-e_{237}+e_{246}-e_{568}-e_{579}}$
  &  ${\scriptstyle \ssl(2,\R) + \uuu}$ \\
  & & &    & &
  &${\scriptstyle e_{127}+e_{136}-e_{145}+e_{235}+e_{246}-e_{347}-e_{568}+e_{579}}$
  &  ${\scriptstyle \su(2) + \uuu}$ \\
\gr
61 & {\tiny 0 0 0 0 1 0 0 5 }& {\tiny 63} &\y8 & \y1 &
  &${\scriptstyle e_{137}+e_{146}-e_{236}-e_{245}+e_{479}+e_{569}}$
&  ${\scriptstyle \ssl(2,\R)+\ttt}$ \\
\gr
  & & &    & &
  &${\scriptstyle -2e_{134}-e_{145}+e_{167}-e_{246}+\frac{1}{2}e_{257}-2e_{369}+e_{479}-e_{569}}$
&  ${\scriptstyle \su(2)+\ttt}$ \\
62 & {\tiny 0 1 0 2 0 3 0 1 }& {\tiny 63} & \y9 &  \y2   &
  &${\scriptstyle e_{146}-e_{235}+e_{479}+e_{579}+e_{678}}$
  & ${\scriptstyle \ssl(2,\R) + 2\ttt}$  \\
  & & &   & &
  &${\scriptstyle -e_{146}+2e_{157}+e_{234}-e_{479}-2e_{569}+2e_{678}}$
& ${\scriptstyle \ssl(2,\R) + \ttt + \uuu}$  \\
\gr
63 & {\tiny 1 1 1 0 1 0 2 1 }& {\tiny 62} & \y9 & \y1   &
  &${\scriptstyle e_{127}+e_{146}-e_{236}-e_{345}+e_{579}+e_{678}}$
& ${\scriptstyle \ssl(2,\R) + \ttt}$  \\
64 & {\tiny 0 1 2 0 1 0 2 0 }& {\tiny 62} & \y9 &\y2  &
  &${\scriptstyle e_{137}-e_{246}-e_{345}+e_{568}+e_{579}}$
  &  ${\scriptstyle 3\ttt}$ \\
  & & &    & &
  &${\scriptstyle -e_{136}-e_{147}+e_{237}-e_{246}+e_{345}+e_{568}+e_{579}}$
  & ${\scriptstyle \ttt+2\uuu}$  \\
\gr
65 & {\tiny 0 2 0 0 2 0 0 2 }& {\tiny 61} & \y9 &\y{18} & \f{\mu_3\times S_3}
  &${\scriptstyle e_{137}-e_{246}-e_{247}-e_{345}+e_{569}+e_{678}}$
& ${\scriptstyle 2\ttt}$  \\
\gr
  & & &  & &
  &${\scriptstyle -e_{137}+e_{247}-e_{256}-e_{345}+e_{469}+e_{579}+e_{678}}$
&  ${\scriptstyle \ttt+\uuu}$ \\
66 & {\tiny 1 2 1 1 0 0 1 1 }& {\tiny 61} & \y9 & \y1  &
  &${\scriptstyle e_{137}-e_{246}+e_{479}+e_{569}-e_{578}}$
&  ${\scriptstyle \ssl(2,\R) + 2\ttt}$ \\
\gr
67 & {\tiny 0 0 1 0 0 0 1 4 }& {\tiny 61} &\y8 & \y2 &
  &${\scriptstyle e_{137}+e_{146}-e_{236}-e_{245}+e_{579}}$
  &  ${\scriptstyle \ssl(2,\R) + 2\ttt}$ \\
  \gr
  & & &    & &
  &${\scriptstyle -e_{125}-e_{126}+e_{147}-e_{237}-e_{345}+e_{346}+e_{679}}$
  & ${\scriptstyle \ssl(2,\R) +  \ttt+\uuu}$  \\
  \gr
  & & &    & &
  &${\scriptstyle -e_{127}-e_{136}+e_{145}+e_{235}+e_{246}-e_{347}-e_{679}}$
  &   ${\scriptstyle \su(2) + \ttt+\uuu}$\\
68 & {\tiny 1 0 1 1 0 3 1 0 }& {\tiny 61} & \y9 & \y1   &
  &${\scriptstyle e_{147}+e_{156}-e_{234}-e_{578}+e_{679}}$
& ${\scriptstyle 2\hs\ssl(2,\R) + \ttt}$  \\
\gr
69  & {\tiny 1 1 0 1 1 0 1 1 }& {\tiny 60} & \y9 & \y1 &
  &${\scriptstyle e_{156}-e_{237}-e_{246}-e_{345}+e_{479}+e_{678}}$
& ${\scriptstyle 2\ttt}$  \\
70 & {\tiny 0 1 0 0 1 0 0 4 }& {\tiny 60} &\y8 & \y{6} & \y{S_3}
  &${\scriptstyle e_{137}-e_{246}-e_{247}-e_{345}+e_{569}}$
  &  ${\scriptstyle 3\ttt}$ \\
  & & &    & &
  &${\scriptstyle -e_{136}-e_{147}-e_{257}+e_{345}+e_{379}-e_{469}}$
& ${\scriptstyle 2\ttt +\uuu}$  \\
\gr
71 & {\tiny 0 2 2 0 0 0 2 0 }& {\tiny 59} & \y9 & \y2 &
  &${\scriptstyle e_{137}-e_{246}+e_{568}+e_{579}}$
& ${\scriptstyle 2\hs\ssl(2,\R) + 2 \ttt}$  \\
\gr
  & & &    & &
  &${\scriptstyle -e_{126}-e_{147}-e_{237}-e_{346}+e_{568}+e_{579}}$
& ${\scriptstyle \ssl(2,\C) +  \ttt+\uuu}$  \\
72 & {\tiny 1 0 1 1 0 1 1 0 }& {\tiny 58} & \y9 & \y9 & \f{\mu_3\times \mu_3}
  &${\scriptstyle e_{147}+e_{156}-e_{237}-e_{246}-e_{345}-e_{578}+e_{679}}$
&  ${\scriptstyle \ssl(2,\R)}$ \\
\gr
73 & {\tiny 2 0 1 0 1 1 0 1 }& {\tiny 58} & \y9 & \y1 &
  &${\scriptstyle e_{137}-e_{256}-e_{346}+e_{479}-e_{578}}$
&  ${\scriptstyle \ssl(2,\R) + 2 \ttt}$ \\
74 & {\tiny 1 0 0 1 0 0 1 3 }& {\tiny 58} &\y8 & \y1   &
  &${\scriptstyle e_{156}-e_{237}-e_{246}-e_{345}+e_{479}}$
&  ${\scriptstyle \ssl(2,\R) + 2\ttt}$ \\
\gr
75 & {\tiny 0 3 0 0 0 0 0 3 }& {\tiny 57} & \y9 & \y{18} &\f{\mu_3\times S_3}
  &${\scriptstyle e_{137}-e_{246}-e_{247}+e_{569}+e_{678}}$
  &  ${\scriptstyle 3\hs\ssl(2,\R) }$ \\
\gr
  & & &  & &
  &${\scriptstyle -e_{126}-e_{346}-e_{379}+e_{457}+e_{569}+e_{678}}$
&  ${\scriptstyle \ssl(2,\R)+\ssl(2,\C)}$ \\
76 & {\tiny 0 1 0 0 1 0 1 2 }& {\tiny 56} & \y8 & \y3   &
  &${\scriptstyle e_{147}+e_{156}-e_{237}-e_{246}-e_{345}+e_{679}}$
&  ${\scriptstyle \ssl(2,\R) + \ttt}$ \\
\gr
77 & {\tiny 1 2 0 0 0 1 0 2 }& {\tiny 56} & \y9 & \y1 &
  &${\scriptstyle e_{137}-e_{246}-e_{356}+e_{579}+e_{678}}$
& ${\scriptstyle 2\hs\ssl(2,\R)+ \ttt}$  \\
78 &  {\tiny 0 2 0 0 0 0 0 4 }& {\tiny 56} & \y8 & \y{6}   & \f{S_3}
  &${\scriptstyle e_{137}-e_{246}-e_{247}+e_{569}}$
  &  ${\scriptstyle 3\hs\ssl(2,\R) + \ttt}$ \\
  & & &  & &
  &${\scriptstyle -e_{126}+e_{179}-e_{257}+e_{346}+e_{569}}$
& ${\scriptstyle \ssl(2,\R)+\ssl(2,\C) + \ttt}$  \\
\gr
79  & {\tiny 0 0 2 0 0 4 0 0 }& {\tiny 56} & \y9 & \y1 &
  &${\scriptstyle e_{157}-e_{234}+e_{568}+e_{679}}$
&  ${\scriptstyle 2\hs\ssl(3,\R)}$ \\
80 & {\tiny 1 1 0 1 0 1 0 1 }& {\tiny 55} & \y9 & \y3 &
  &${\scriptstyle e_{147}-e_{237}-e_{256}-e_{346}+e_{579}+e_{678}}$
&  ${\scriptstyle \ssl(2,\R) + \ttt}$ \\
\gr
81 & {\tiny 0 0 2 0 0 2 0 0 }& {\tiny 55} & \y9 & \y3   &
  &${\scriptstyle e_{157}-e_{237}-e_{246}-e_{345}+e_{568}+e_{679}}$
& ${\scriptstyle \ssl(3,\R)}$  \\
82 & {\tiny 1 1 0 0 0 1 0 3 }& {\tiny 55} & \y8 & \y1    &
  &${\scriptstyle e_{137}-e_{246}-e_{356}+e_{579}}$
&  ${\scriptstyle 2\hs\ssl(2,\R) + 2\ttt}$ \\
\gr
83 & {\tiny 0 0 0 3 0 0 0 0 }& {\tiny 54} & \y9 & \y3 &
  &${\scriptstyle e_{157}-e_{247}-e_{256}-e_{346}+e_{458}+e_{679}}$
& ${\scriptstyle \sp(4,\R)}$  \\
84 & {\tiny 1 0 0 1 0 1 0 2 }& {\tiny 54} & \y8 & \y1    &
  &${\scriptstyle e_{147}-e_{237}-e_{256}-e_{346}+e_{579}}$
& ${\scriptstyle \ssl(2,\R) + 2\ttt}$  \\
\gr
85 & {\tiny 0 1 0 2 0 0 0 1 }& {\tiny 53} & \y9 & \y1 &
  &${\scriptstyle e_{147}-e_{256}-e_{346}+e_{579}+e_{678}}$
& ${\scriptstyle 2\hs\ssl(2,\R) + \ttt}$  \\
86 & {\tiny 0 0 0 2 0 0 0 2 }& {\tiny 52} & \y8 & \y2    &
  &${\scriptstyle e_{147}-e_{256}-e_{346}+e_{579}}$
  &  ${\scriptstyle 2\hs\ssl(2,\R)+ 2\ttt}$ \\
  & & &    & &
 &${\scriptstyle -e_{145}-e_{167}+e_{257}-e_{346}-e_{479}-e_{569}}$
& ${\scriptstyle \ssl(2,\C) +\ttt+\uuu}$  \\
\gr
87 & {\tiny 2 1 0 1 0 0 0 2 }& {\tiny 52} & \y9 & \y1 &
  &${\scriptstyle e_{127}+e_{379}-e_{456}+e_{678}}$
&  ${\scriptstyle \ssl(2,\R)+\sp(4,\R) + \ttt}$ \\
88 & {\tiny 0 1 0 1 0 0 1 1 }& {\tiny 51} & \y8 & \y1    &
  &${\scriptstyle e_{157}-e_{247}-e_{256}-e_{346}+e_{679}}$
&  ${\scriptstyle \ssl(2,\R) + 2\ttt}$  \\
\gr
89 & {\tiny 2 0 0 1 0 1 0 0 }& {\tiny 49} & \y9 & \y3 &
  &${\scriptstyle e_{157}-e_{237}-e_{456}+e_{478}+e_{679}}$
&  ${\scriptstyle \ssl(2,\R)+\ssl(3,\R)}$ \\
90 & {\tiny 0 0 0 0 0 0 3 0 }& {\tiny 49} & \y7 & \y3    &
  &${\scriptstyle e_{147}+e_{156}-e_{237}-e_{246}-e_{345}}$
  & ${\scriptstyle \ssl(2,\R)+{\sf G}_2^{\rm spl}}$   \\
  & & &   & &
 &${\scriptstyle -e_{123}-e_{145}-e_{167}+e_{246}-e_{257}-e_{347}-e_{356}}$
& ${\scriptstyle \ssl(2,\R)+{\sf G}_2^{\mathrm{c}}}$  \\
\gr
91 & {\tiny 2 0 0 1 0 0 0 3 }& {\tiny 49} & \y8 &  \y1    &
  &${\scriptstyle e_{127}+e_{379}-e_{456}}$
&  ${\scriptstyle \ssl(3,\R)+\sp(4,\R) + \ttt}$ \\
92 & {\tiny 1 0 1 0 0 1 0 1 }& {\tiny 48} & \y8 &  \y1   &
  &${\scriptstyle e_{157}-e_{247}-e_{356}+e_{679}}$
&  ${\scriptstyle 2\hs\ssl(2,\R)+ 2\ttt}$ \\
\gr
93 & {\tiny 0 0 0 1 0 0 2 0 }& {\tiny 48} & \y7 &  \y1   &
  &${\scriptstyle e_{157}-e_{247}-e_{256}-e_{346}}$
  & ${\scriptstyle 3\hs\ssl(2,\R) + \ttt}$  \\
\gr
  & & &    & &
  &${\scriptstyle -e_{145}+e_{167}-4e_{246}-e_{257}+e_{347}-e_{356}}$
& ${\scriptstyle \ssl(2,\R)+2\hs\su(2) + \ttt}$  \\
94 & {\tiny 0 1 0 0 0 1 1 0 }& {\tiny 45} & \y7 & \y2    &
  &${\scriptstyle e_{167}-e_{247}-e_{356}}$
  &  ${\scriptstyle 3\hs \ssl(2,\R)+ 2\ttt}$ \\
  & & &   & &
  &${\scriptstyle e_{167}+e_{236}+e_{257}-e_{347}-e_{456}}$
&  ${\scriptstyle \ssl(2,\R)+\ssl(2,\C) + \ttt + \uuu }$ \\
\gr
95 & {\tiny 1 0 0 1 0 0 1 0 }& {\tiny 42} & \y7 & \y1    &
  &${\scriptstyle e_{167}-e_{257}-e_{347}-e_{456}}$
  &  ${\scriptstyle \ssl(2,\R)+\ssl(3,\R) + \ttt}$ \\
96 & {\tiny 0 0 0 0 0 2 0 0 }& {\tiny 38} & \y6 & \y2   &
  &${\scriptstyle -e_{247}-e_{356}}$
  & ${\scriptstyle  3\hs\ssl(3,\R)}$  \\
  & & &    & &
  &${\scriptstyle e_{234}+e_{267}+e_{357}+e_{456}}$
& ${\scriptstyle \ssl(3,\R)+\ssl(3,\C)}$  \\
\gr
97 & {\tiny 0 0 1 0 0 1 0 0 }& {\tiny 37} & \y6 & \y1 &
  &${\scriptstyle -e_{267}-e_{357}-e_{456}}$
&  ${\scriptstyle  2\hs\ssl(3,\R)+ \ttt}$ \\
98 & {\tiny 3 0 0 0 0 0 0 0 }& {\tiny 36} & \y9 & \y3     &
  &${\scriptstyle e_{167}-e_{257}-e_{347}-e_{789}}$
&  ${\scriptstyle \sp(8,\R)}$ \\
\gr
99 &  {\tiny 2 0 0 0 0 0 1 0 }& {\tiny 35} & \y7 & \y1    &
  &${\scriptstyle e_{167}-e_{257}-e_{347}}$
&  ${\scriptstyle \ssl(2,\R)+\sp(6,\R) + \ttt}$ \\
100 & {\tiny 1 0 0 0 1 0 0 0 }& {\tiny 30} & \y5 &\y1    &
  &${\scriptstyle -e_{367}-e_{457}}$
& ${\scriptstyle \ssl(4,\R)+\sp(4,\R) + \ttt}$ \\
\gr
101 & {\tiny 0 0 1 0 0 0 0 0 }& {\tiny 19} & \y3 & \y1     &
  &${\scriptstyle -e_{567}}$
  & ${\scriptstyle \ssl(3,\R)+\ssl(6,\R)}$ \\
\hline
\end{longtable}

\subsection{The semisimple orbits}\label{sec:tabsemsim}

A {\em Cartan subspace} of $\g_1^\cC$ is a maximal subspace of $\g_1^\cC$
consisting of commuting semisimple elements.  In \cite{VE1978} it is shown that
\begin{align*}
  p_1 &= e_{123}+e_{456}+e_{789}\\
  p_2 &= e_{147}+e_{258}+e_{369}\\
  p_3 &= e_{159}+e_{267}+e_{348}\\
  p_4 &= e_{168}+e_{249}+e_{357}
\end{align*}
span a Cartan subspace in $\g_1^\cC$. In this paper we denote it by
$\Cg^\cC$. Each complex semisimple orbit has a point in this space.
This can still be refined a bit: in \cite{VE1978} seven {\em canonical sets}
$\Fm_1^\cC,\ldots,\Fm_7^\cC$ in $\Cg^\cC$ are described with the property that
each complex semisimple orbit has a point in precisely one of the
$\Fm_i^\cC$. By $\Fm_i$ we denote the set of real elements of $\Fm_i^\cC$.

In this paper we divide the real semisimple orbits into two
groups, the orbits of canonical and  noncanonical semisimple elements.
We say that a real semisimple element is canonical if it lies in one of the
$\Fm_i$, or more generally, if it is $\SL(9,\R)$-conjugate to an element of
one of the sets $\Fm_i$. The noncanonical real semisimple elements are those that
are not $\SL(9,\R)$-conjugate to elements of some $\Fm_i$.

Similarly to the complex case, we define a {\em Cartan subspace} of
$\g_1$ to be a maximal subspace consisting of commuting semisimple elements \cite[\S 2]{Le2011}.
Clearly, any semisimple element of $\g_1$ is contained in some Cartan subspace of $\g_1$.
By $\Cg$ we denote the space consisting of real linear combinations of
$p_1,\ldots,p_4$, that is, the set of real points of $\Cg^\cC$. It is clear
that this is a Cartan subspace of $\g_1$.
We show that all Cartan subspaces
in $\g_1$ are conjugate under $\SL(9,\R)$ (Theorem \ref{thm:cartanr}).
It follows that every semisimple $\SL(9,\R)$-orbit has a point in $\Cg$.

Below we give the representatives of the orbits of canonical and noncanonical
semisimple elements. The representatives of the noncanonical orbits do
not lie in $\Cg$. However, as just observed, these representatives are
$\SL(9,\R)$-conjugate to elements of $\Cg$. In each case we also give these
elements of $\Cg$. Note that noncanonical semisimple elements are
$\SL(9,\R)$-conjugate to elements of $\Cg$, {\em but not to elements of
some $\Fm_i$.}

Except for $\Fm_7^\cC$, the sets $\Fm_i^\cC$ are parametrized families of
semisimple elements with at least one complex parameter.
The real semisimple elements also occur in parametrized
families.
For each parametrized family we give a finite matrix group that
determines when different elements of the same family are conjugate.
More precisely, consider a family of semisimple elements $p(\lambda_1,\ldots,
\lambda_m)$, where $\lambda_i\in \R$ must satisfy certain polynomial conditions.
Write $\lambda$ for the column vector consisting of the $\lambda_i$. Let $\mu$
be a second column vector with coordinates $\mu_1,\ldots,\mu_m$.
Then we describe a finite matrix group $\mathcal{G}$ with the property that
$p(\lambda_1,\ldots,\lambda_m)$ and $p(\mu_1,\ldots,\mu_m)$ are
$\SL(9,\R)$-conjugate if and only if there is an element $g\in \mathcal{G}$ with
$g\cdot \lambda = \mu$. We express this by saying that the conjugacy of
the elements $p(\lambda_1,\ldots,\lambda_m)$ is determined by $\mathcal{G}$.

{\em Canonical set $\Fm_1^\cC$:} It consists of trivectors
$$p^{1,1}_{\lambda_1,\lambda_2,\lambda_3,\lambda_4} = \lambda_1 p_1+\lambda_2p_2
+\lambda_3p_3+\lambda_4p_4$$
where the $\lambda_i\in \C$ are such that
\begin{align*}
\lambda_1\lambda_2\lambda_3\lambda_4 &\neq 0\\
(\lambda_2^3+\lambda_3^3+\lambda_4^3)^3 -(3\lambda_2\lambda_3\lambda_4)^3 &\neq 0\\
(\lambda_1^3+\lambda_3^3+\lambda_4^3)^3 +(3\lambda_1\lambda_3\lambda_4)^3 &\neq 0\\
(\lambda_1^3+\lambda_2^3+\lambda_4^3)^3 +(3\lambda_1\lambda_2\lambda_4)^3 &\neq 0\\
(\lambda_1^3+\lambda_2^3+\lambda_3^3)^3 +(3\lambda_1\lambda_2\lambda_3)^3 &\neq 0.
\end{align*}
The real elements in $\Fm_1$ consist of $p^{1,1}_{\lambda_1,\lambda_2,\lambda_3,\lambda_4}$
with $\lambda_i\in\R$ satisfying the same polynomial conditions.

There are no noncanonical real semisimple elements  that are
$\SL(9,\C)$-conjugate to elements of $\Fm_1^\cC$.

The conjugacy of the real elements $p^{1,1}_{\lambda_1,\lambda_2,\lambda_3,\lambda_4}$ in $\Fm_1$ is
determined by a group of order 48, isomorphic to $\GL(2,\mathbb{F}_3)$ and
generated by
$$\SmallMatrix{0&0&0&-1\\0&-1&0&0\\0&0&1&0\\-1&0&0&0},\
\SmallMatrix{0&0&-1&0\\1&0&0&0\\0&-1&0&0\\0&0&0&1}.$$

{\em Canonical set $\Fm_2^\cC$:} It consists of trivectors
$$p^{2,1}_{\lambda_1,\lambda_2,\lambda_3} = \lambda_1 p_1+\lambda_2p_2
-\lambda_3p_3$$
where the $\lambda_i\in \C$ are such that
$$\lambda_1\lambda_2\lambda_3(\lambda_1^3-\lambda_2^3)(\lambda_2^3-\lambda_3^3)
(\lambda_3^3-\lambda_1^3)\big[(\lambda_1^3+\lambda_2^3+\lambda_3^3)^3-
  (3\lambda_1\lambda_2\lambda_3)^3\big]\neq 0.$$
The real elements in $\Fm_2$ consist of $p^{2,1}_{\lambda_1,\lambda_2,\lambda_3}$
where  $\lambda_1,\lambda_2,\lambda_3\in \R$ satisfy the same polynomial
conditions.

Here we have noncanonical real semisimple elements
\begin{align*}
p^{2,2}_{\lambda_1,\lambda_2,\lambda_3}= &\lambda_1(-\tfrac{1}{2}e_{126}-\tfrac{1}{2}e_{349}+2e_{358}-e_{457}+e_{789})+\lambda_2(
    -2e_{137}-\tfrac{1}{4}e_{249}-e_{258}-\tfrac{1}{2}e_{456}-\tfrac{1}{2}e_{689})\\
    &-\lambda_3(-e_{159}-e_{238}-\tfrac{1}{2}e_{247}-\tfrac{1}{2}e_{346}-e_{678}),\quad \lambda_1,\lambda_2,\lambda_3\in \R.
\end{align*}
The trivector $p^{2,2}_{\lambda_1,\lambda_2,\lambda_3}$ is $\SL(9,\C)$-conjugate to the
{\em purely imaginary} trivector $p^{2,1}_{i\lambda_1,i\lambda_2,i\lambda_3}\in\Fm_2^\cC$.
So here the $\lambda_i\in \R$ are required to satisfy
$$\lambda_1\lambda_2\lambda_3(\lambda_1^3-\lambda_2^3)(\lambda_2^3-\lambda_3^3)
(\lambda_3^3-\lambda_1^3)[(\lambda_1^3+\lambda_2^3+\lambda_3^3)^3+
  (3\lambda_1\lambda_2\lambda_3)^3]\neq 0.$$

The trivector $p^{2,2}_{\lambda_1,\lambda_2,\lambda_3}$ is $\SL(9,\R)$-conjugate to
$$\frac{1}{3}\sqrt{3}((\lambda_2-\lambda_3)p_1+(\lambda_1-\lambda_3)p_2
+(\lambda_1+\lambda_2+\lambda_3)p_3+(\lambda_1-\lambda_2)p_4)$$
which lies in $\Cg$.

The conjugacy of the elements $p^{2,1}_{\lambda_1,\lambda_2,\lambda_3}\in \Fm_2$ is
determined by a group isomorphic to the dihedral group of order 12 and
generated by
$$\SmallMatrix{0&1&0\\0&0&1\\1&0&0},\
\SmallMatrix{-1&0&0\\0&0&-1\\0&-1&0}.$$
The conjugacy of the elements $p^{2,2}_{\lambda_1,\lambda_2,\lambda_3}$ is
determined by exactly the same group.

{\em Canonical set $\Fm_3^\cC$:}   It consists of trivectors
$$p^{3,1}_{\lambda_1,\lambda_2} = \lambda_1 p_1+\lambda_2p_2$$
where the $\lambda_i\in \C$ are such that
$\lambda_1\lambda_2(\lambda_1^6-\lambda_2^6)\neq 0$.
The real elements in $\Fm_3$ consist of $p^{3,1}_{\lambda_1,\lambda_2}$
where $\lambda_1,\lambda_2\in \R$ satisfy the same polynomial conditions.

Here we have  noncanonical real semisimple elements
\begin{align*}
p^{3,2}_{\lambda_1,\lambda_2}&=\lambda_1(-\tfrac{1}{2}e_{126}-\tfrac{1}{2}e_{349}+
  2e_{358}-e_{457}+e_{789})+\lambda_2(-2e_{137}-\tfrac{1}{4}e_{249}-e_{258}-
  \tfrac{1}{2}e_{456}-\tfrac{1}{2}e_{689})\\
p^{3,3}_{\lambda_1,\lambda_2}&= \lambda_1(e_{129}-e_{138}+2e_{237}-\tfrac{1}{4}e_{456}-
   2e_{789})+\lambda_2(-\tfrac{1}{2}e_{147}-e_{258}+2e_{369})\\
p^{3,4}_{\lambda,\mu}&=\lambda(e_{147}-2e_{169}-e_{245}+e_{289}-e_{356}-
   \tfrac{1}{2}e_{378})+
   \mu(-e_{124}-e_{136}-\tfrac{1}{2}e_{238}+e_{457}-2e_{569}+e_{789}),
\end{align*}
where $\lambda_1,\lambda_2,\lambda,\mu\in \R$.
These are $\SL(9,\C)$-conjugate, respectively, to the nonreal elements of
$\Fm_3^\cC$
$$i\lambda_1p_1+i\lambda_2p_2,\qquad i\lambda_1 p_1+\lambda_2 p_2,\qquad
(\lambda+i\mu) p_1+(\lambda-i\mu) p_2.$$
So for $p^{3,2}_{\lambda_1,\lambda_2}$ the $\lambda_i$ are required to satisfy
$\lambda_1\lambda_2(\lambda_1^6-\lambda_2^6)\neq 0$. For
$p^{3,3}_{\lambda_1,\lambda_2}$ the $\lambda_i$ are required to satisfy
$\lambda_1\lambda_2(\lambda_1^6+\lambda_2^6)\neq 0$. For $p^{3,4}_{\lambda,\mu}$
the $\lambda,\mu$ are required to satisfy
$\lambda\mu(3\lambda^4-10\lambda^2\mu^2+3\mu^4)\neq 0$.

We have that $p^{3,2}_{\lambda_1,\lambda_2}$, $p^{3,3}_{\lambda_1,\lambda_2}$,
$p^{3,4}_{\lambda,\mu}$ are $\SL(9,\R)$-conjugate to respectively
\begin{align*}
& \frac{1}{3}\sqrt{3}(\lambda_2p_1+\lambda_1p_2+(\lambda_1+\lambda_2)p_3
+(\lambda_1-\lambda_2)p_4,\\
& -\lambda_2p_1 +\frac{1}{3}\sqrt{3} \lambda_1 (p_2+p_3+p_4),\\
& (\lambda+\frac{1}{3}\sqrt{3}\mu)p_2+(-\lambda+\frac{1}{3}\sqrt{3}\mu)p_3-
\frac{2}{3}\sqrt{3} \mu p_4,
\end{align*}
which lie in the Cartan subspace $\Cg$.

The conjugacy of the elements $p^{3,1}_{\lambda_1,\lambda_2}\in \Fm_3$ is
determined by a group isomorphic to the dihedral group of order 8 generated
by
$$\SmallMatrix{-1&0\\0&1},\, \SmallMatrix{0&1\\1&0}.$$
The conjugacy of the elements $p^{3,2}_{\lambda_1,\lambda_2}$ is determined by
exactly the same group. The conjugacy of the elements $p^{3,3}_{\lambda_1,\lambda_2}$
is determined by a group of order 4 generated
by
$$\SmallMatrix{-1&0\\0&1},\, \SmallMatrix{1&0\\0&-1}.$$
The conjugacy of the elements $p^{3,4}_{\lambda,\mu}$ is determined by
exactly the same group of order 4.

{\em Canonical set $\Fm_4^\cC$:} It consists of trivectors
$$p^{4,1}_{\lambda,\mu} = \lambda p_1+\mu (p_3-p_4)$$
where the $\lambda,\mu\in \C$ are such that $\lambda\mu(\lambda^3-\mu^3)
(\lambda^3+8\mu^3)\neq 0$.
The real elements in $\Fm_4$ consist of $p^{4,1}_{\lambda,\mu}$ were
$\lambda,\mu\in\R$ satisfy the same polynomial conditions.
Here there are no noncanonical real semisimple elements.

The conjugacy of the elements $p^{4,1}_{\lambda,\mu}\in \Fm_4$ is determined by the
group consisting of $\SmallMatrix{1&0\\0&1}$, $\SmallMatrix{-1&0\\0&-1}$.

{\em Canonical set $\Fm_5^\cC$:} It consists of trivectors
$$p^{5,1}_{\lambda} = \lambda  (p_3-p_4)$$
where the $\lambda\in \C$ is nonzero.
The real elements in $\Fm_5$ consist of $p^{5,1}_{\lambda}$ with
$\lambda\in\R$ nonzero.

Here we have noncanonical real semisimple elements
$$p^{5,2}_\lambda = \lambda (e_{148}-e_{159}-e_{238}+\tfrac{1}{2}e_{239}
     -\tfrac{1}{2}e_{247}+e_{257}-\tfrac{1}{2}e_{346}-e_{356}-e_{678}-\tfrac{1}{2}e_{679})$$
where $\lambda\in \R$ is nonzero. We have that $p^{5,2}_\lambda$ is
$\SL(9,\C)$-conjugate to the {\em purely imaginary} trivector
$p^{5,1}_{i\lambda}\in\Fm_5$.

Furthermore, $p^{5,2}_\lambda$ is $\SL(9,\R)$-conjugate to
$$\frac{1}{3}\sqrt{3}\lambda (2p_2-p_3-p_4)$$
which lies in the Cartan subspace $\Cg$.

The conjugacy of the elements $p^{5,1}_\lambda\in \Fm_5$ is determined by the
group consisting of $1,-1$. The conjugacy of the elements $p^{5,2}_\lambda$ is
determined by the same group.

{\em Canonical set $\Fm_6^\cC$:} It consists of trivectors
$$p^{6,1}_{\lambda} = \lambda  p_1$$
where the $\lambda\in \C$ is nonzero.
The real elements in $\Fm_6$ consist of $p^{6,1}_{\lambda}$ where
$\lambda\in \R$ is nonzero.

Here we have  noncanonical real semisimple elements
$$p^{6,2}_\lambda =\lambda (-\tfrac{1}{2}e_{126}-\tfrac{1}{2}e_{349}+2e_{358}-
e_{457}+e_{789})$$
where $\lambda\in \R$ is nonzero. We have that $p^{6,2}_\lambda$ is
$\SL(9,\C)$-conjugate to the {\em purely imaginary} trivector
$p^{6,1}_{i\lambda}\in\Fm_6$.

We have that $p^{6,2}_\lambda$ is $\SL(9,\R)$-conjugate to
$$\frac{1}{3}\sqrt{3}\lambda (p_2+p_3+p_4)$$
which lies in the Cartan subspace $\Cg$.

The conjugacy of the elements $p^{6,1}_\lambda$ is determined by the group
consisting of $1,-1$. The conjugacy of the elements $p^{6,2}_\lambda$ is
determined by the same group.

{\em Canonical set $\Fm_7^\Cc$:} this set consists just of 0.

\subsection{The mixed orbits}\label{sec:mixedtables}

Here we give tables of representatives of the real orbits of mixed type.
The representatives of those orbits have a semisimple part which is equal to
one of the semisimple elements listed above. So the semisimple parts are
of the form $p^{2,1}_{\lambda_1,\lambda_2,\lambda_3},p^{2,2}_{\lambda_1,\lambda_2,\lambda_3},
\ldots $. For each such semisimple part we have a table listing the possible
nilpotent parts. Note that the centralizer of
$p^{1,1}_{\lambda_1,\lambda_2,\lambda_3,\lambda_4}$ in $\g^\cC$ is equal to the unique
Cartan subalgebra of $\g^\cC$ that contains $\Cg$, so there are
no mixed elements with $p^{1,1}_{\lambda_1,\lambda_2,\lambda_3,\lambda_4}$ as
semisimple part.

In the previous subsection we also computed elements of the Cartan subspace
$\Cg$ that are $\SL(9,\R)$-conjugate to the elements
$p^{2,2}_{\lambda_1,\lambda_2,\lambda_3}, p^{3,2}_{\lambda_1,\lambda_2},\ldots$. However
we do not work with those, because the nilpotent parts tend to become
rather bulky. For example, the mixed element
$p^{6,2}_\lambda-2e_{137}$ is $\SL(9,\R)$-conjugate to
\begin{align*}
\frac{1}{3}\sqrt{3}\lambda (p_2+p_3+p_4)&+\frac{1}{9}\sqrt{3}(
e_{123}+e_{126}+e_{129}-e_{135}-e_{138}+e_{156}+e_{159}-e_{168}+e_{189}+e_{234}\\
& +e_{237}-e_{246}-e_{249}+e_{267}-e_{279}+e_{345}+e_{348}-e_{357}+e_{378}+e_{456}\\
& +e_{459}-e_{468}+
e_{489}+e_{567}-e_{579}+e_{678}+e_{789}).
\end{align*}

The caption of each table has the semisimple part $p$ of the mixed elements
whose nilpotent parts are listed in the table.
All tables have three columns. The first column has the number of the complex
nilpotent orbit, which corresponds to the numbering in the tables in
\cite{VE1978}. The
second column has the representatives of the real nilpotent orbits. In the
third column we display the isomorphism type of the centralizer in
$\z_{\g_0}(p)$ of a homogeneous $\ssl_2$-triple in $\z_\g(p)$ containing the
nilpotent element on the same line. Among the nilpotent elements we also
include 0; in that case the centralizer is equal to $\z_{\g_0}(p)$.

\bigskip

\begin{longtable}{|r|l|l|}
  \caption{Nilpotent parts of mixed elements with semisimple part
  $p^{2,1}_{\lambda_1,\lambda_2,\lambda_3}$}\label{tab:fam2_1}
\endfirsthead
\hline
\endhead
\hline
\endfoot
\endlastfoot

\hline
No. &\qquad\quad Reps. of nilpotent parts & $\qquad\mathfrak{z}_0(p,h,e,f)$\\
\hline
1 & ${\scriptstyle e_{168}+e_{249}}$ & ${\scriptstyle 0}$ \\
\gr
2 & ${\scriptstyle e_{168}}$ & ${\scriptstyle \ttt}$ \\
3 & ${\scriptstyle  0}$ & ${\scriptstyle 2\ttt}$ \\
\hline
\end{longtable}

\begin{longtable}{|r|l|l|}
    \caption{Nilpotent parts of mixed elements with semisimple part
  $p^{2,2}_{\lambda_1,\lambda_2,\lambda_3}$}\label{tab:fam2_2}
\endfirsthead
\hline
\endhead
\hline
\endfoot
\endlastfoot

\hline
No. &\qquad\quad Reps. of nilpotent parts & $\qquad\mathfrak{z}_0(p,h,e,f)$ \\
\hline
1 & ${\scriptstyle e_{235}+\tfrac{1}{2}e_{279}-\tfrac{1}{2}e_{369}-e_{567}}$ &
${\scriptstyle 0}$\\
\gr
2 & ${\scriptstyle -e_{148}}$ & ${\scriptstyle \uuu}$\\
\gr
  & ${\scriptstyle e_{148}}$ & ${\scriptstyle \uuu}$\\
3 & ${\scriptstyle 0}$ & ${\scriptstyle \ttt+\uuu}$\\
\hline
\end{longtable}

\bigskip\medskip

\begin{longtable}{|r|l|l|}
      \caption{Nilpotent parts of mixed elements with semisimple part
  $p^{3,1}_{\lambda_1,\lambda_2}$}\label{tab:fam3_1}
\endfirsthead
\hline
\endhead
\hline
\endfoot
\endlastfoot

\hline
No. &\qquad\quad Reps. of nilpotent parts & $\qquad\mathfrak{z}_0(p,h,e,f)$ \\
\hline
1 & ${\scriptstyle e_{159}+e_{168}+e_{249}+e_{267}}$ & ${\scriptstyle 0}$ \\
\gr
2 & ${\scriptstyle e_{159}+e_{168}+e_{249}}$ & ${\scriptstyle \ttt}$\\
3 & ${\scriptstyle e_{159}+e_{168}+e_{267}}$ & ${\scriptstyle \ttt}$\\
\gr
4 & ${\scriptstyle e_{159}+e_{168}}$ & ${\scriptstyle 2\ttt}$\\\
5 & ${\scriptstyle e_{159}+e_{267}}$ & ${\scriptstyle 2\ttt}$\\
\gr
6 & ${\scriptstyle e_{168}+e_{249}}$ & ${\scriptstyle 2\ttt}$\\
7 & ${\scriptstyle e_{159}}$ & ${\scriptstyle 3\ttt}$\\
\gr
8 & ${\scriptstyle e_{168}}$ & ${\scriptstyle 3\ttt}$\\
9 & ${\scriptstyle 0}$ & ${\scriptstyle 4\ttt}$\\
\hline
\end{longtable}
\bigskip

\medskip

\begin{longtable}{|r|l|l|}
        \caption{Nilpotent parts of mixed elements with semisimple part
  $p^{3,2}_{\lambda_1,\lambda_2}$}\label{tab:fam3_2}
\endfirsthead
\hline
\endhead
\hline
\endfoot
\endlastfoot

\hline
No. &\qquad\quad Reps. of nilpotent parts & $\qquad\mathfrak{z}_0(p,h,e,f)$ \\
\hline
1 & ${\scriptstyle  e_{235}+e_{238}+\tfrac{1}{2}e_{247}+\tfrac{1}{2}e_{279}+
  \tfrac{1}{2}e_{346}-\tfrac{1}{2}e_{369}-e_{567}+e_{678}}$ & ${\scriptstyle 0}$\\
\gr
2 & ${\scriptstyle e_{159}+e_{235}+\tfrac{1}{2}e_{279}-\tfrac{1}{2}e_{369}-e_{567}}$
& ${\scriptstyle \uuu}$\\
\gr
& ${\scriptstyle -e_{159}-e_{235}-\tfrac{1}{2}e_{279}+\tfrac{1}{2}e_{369}+e_{567}}$
& ${\scriptstyle \uuu}$ \\
3 &  ${\scriptstyle -e_{148}+\tfrac{1}{2}e_{234}+e_{278}+e_{368}+
  \tfrac{1}{2}e_{467}}$ &${\scriptstyle \uuu}$\\
& ${\scriptstyle e_{148}-\tfrac{1}{2}e_{234}-e_{278}-e_{368}-\tfrac{1}{2}e_{467}}$ &
${\scriptstyle \uuu}$\\
\gr
4 & ${\scriptstyle -e_{148}+e_{159}}$ & ${\scriptstyle 2\uuu}$\\
\gr
& ${\scriptstyle -e_{148}-e_{159}}$ & ${\scriptstyle 2\uuu}$\\
\gr
& ${\scriptstyle e_{148}+e_{159}}$ & ${\scriptstyle 2\uuu}$\\
\gr
& ${\scriptstyle e_{148}-e_{159}}$ & ${\scriptstyle 2\uuu}$\\
5 & ${\scriptstyle \tfrac{1}{2}e_{234}+e_{278}+e_{368}+\tfrac{1}{2}e_{467}}$ &
${\scriptstyle \ttt+\uuu}$\\
\gr
6 & ${\scriptstyle e_{235}+\tfrac{1}{2}e_{279}-\tfrac{1}{2}e_{369}-e_{567}}$ &
${\scriptstyle \ttt+\uuu}$\\
7 & ${\scriptstyle -e_{159}}$ &${\scriptstyle \ttt+2\uuu}$\\
& ${\scriptstyle e_{159}}$ & ${\scriptstyle \ttt+2\uuu}$\\
\gr
8 & ${\scriptstyle -e_{148}}$ & ${\scriptstyle \ttt+2\uuu}$\\
\gr
& ${\scriptstyle e_{148}}$ & ${\scriptstyle \ttt+2\uuu}$ \\
9 & ${\scriptstyle 0}$ & ${\scriptstyle 2\ttt+2\uuu}$\\
\hline
\end{longtable}

\bigskip

\begin{longtable}{|r|l|l|}
        \caption{Nilpotent parts of mixed elements with semisimple part
  $p^{3,3}_{\lambda_1,\lambda_2}$}\label{tab:fam3_3}
\endfirsthead
\hline
\endhead
\hline
\endfoot
\endlastfoot

\hline
No. &\qquad\quad Reps. of nilpotent parts & $\qquad\mathfrak{z}_0(p,h,e,f)$ \\
\hline
1 & ${\scriptstyle e_{159}-2e_{249}-2e_{348}+2e_{357}}$ & ${\scriptstyle 0}$\\
\gr
4 & ${\scriptstyle e_{159}+2e_{357}}$ & ${\scriptstyle \ttt+\uuu}$\\
9 & ${\scriptstyle 0}$ & ${\scriptstyle 2\ttt+2\uuu}$\\
\hline
\end{longtable}
\bigskip

\begin{longtable}{|r|l|l|}
  \caption{Nilpotent parts of mixed elements with semisimple part
  $p^{3,4}_{\lambda,\mu}$}\label{tab:fam3_4}
\endfirsthead
\hline
\endhead
\hline
\endfoot
\endlastfoot

\hline
No. &\qquad\quad Reps. of nilpotent parts & $\qquad\mathfrak{z}_0(p,h,e,f)$ \\
\hline
1 & ${\scriptstyle e_{123}-2e_{179}-2e_{259}-2e_{267}-\tfrac{1}{2}e_{349}+e_{357}}$
& ${\scriptstyle 0}$ \\
\gr
2 & ${\scriptstyle e_{123}-2e_{179}-2e_{259}-\tfrac{1}{2}e_{349}+e_{357}}$ &
${\scriptstyle \ttt}$\\
3 & ${\scriptstyle -2e_{267}-\tfrac{1}{2}e_{349}+\tfrac{1}{4}e_{468}}$ &
${\scriptstyle \uuu}$ \\
& ${\scriptstyle 2e_{267}-\tfrac{1}{2}e_{349}-\tfrac{1}{4}e_{468}}$ &
${\scriptstyle \uuu}$\\
\gr
4 & ${\scriptstyle -2e_{349}+e_{468}}$ & ${\scriptstyle \ttt+\uuu}$\\
\gr
& ${\scriptstyle -2e_{349}-e_{468}}$ & ${\scriptstyle \ttt+\uuu}$\\
5 & ${\scriptstyle -2e_{267}-\tfrac{1}{2}e_{349}}$ & ${\scriptstyle \ttt+\uuu}$\\
\gr
6 & ${\scriptstyle e_{123}-2e_{179}-2e_{259}+e_{357}}$ & ${\scriptstyle 2\ttt}$\\
7 & ${\scriptstyle e_{349}}$ & ${\scriptstyle 2\ttt+\uuu}$\\
\gr
8 & ${\scriptstyle e_{468}}$ &  ${\scriptstyle 2\ttt+\uuu}$\\
\gr
& ${\scriptstyle -e_{468}}$ &  ${\scriptstyle 2\ttt+\uuu}$\\
9 & ${\scriptstyle 0}$ & ${\scriptstyle 3\ttt+\uuu}$\\
\hline
\end{longtable}
\bigskip

\begin{longtable}{|r|l|l|}
          \caption{Nilpotent parts of mixed elements with semisimple part
  $p^{4,1}_{\lambda,\mu}$}\label{tab:fam4_1}
\endfirsthead
\hline
\endhead
\hline
\endfoot
\endlastfoot

\hline
No. &\qquad\quad Reps. of nilpotent parts & $\qquad\mathfrak{z}_0(p,h,e,f)$ \\
\hline
1 & ${\scriptstyle e_{149}+e_{167}+e_{258}+e_{347} }$ & ${\scriptstyle 0}$\\
\gr
2 & ${\scriptstyle e_{149}+e_{158}+e_{167}+e_{248}+e_{257}+e_{347} }$ &
${\scriptstyle 0}$\\
3 & ${\scriptstyle e_{147}+e_{258} }$ & ${\scriptstyle 0}$\\
& ${\scriptstyle e_{147}-e_{158}-e_{248}-e_{257} }$ & ${\scriptstyle 0}$\\
\gr
4 & ${\scriptstyle e_{148}+e_{157}+e_{247} }$ & ${\scriptstyle \ttt}$  \\
5 & ${\scriptstyle e_{147} }$ & ${\scriptstyle \ssl(2,\R)}$ \\
\gr
6 & ${\scriptstyle 0}$ & ${\scriptstyle \ssl(3,\R)}$\\
\hline
\end{longtable}

\begin{longtable}{|r|l|l|}
            \caption{Nilpotent parts of mixed elements with semisimple part
  $p^{5,1}_{\lambda}$}\label{tab:fam5_1}
\endfirsthead
\hline
\endhead
\hline
\endfoot
\endlastfoot

\hline
No. &\qquad\quad Reps. of nilpotent parts & $\qquad\mathfrak{z}_0(p,h,e,f)$ \\
\hline
1 & ${\scriptstyle e_{123}+e_{149}+e_{167}+e_{258}+e_{347}+e_{456}}$ &
${\scriptstyle 0}$ \\
\gr
2 & ${\scriptstyle e_{123}+e_{149}+e_{158}+e_{167}+e_{248}+e_{257}+e_{347}+e_{456}}$
& ${\scriptstyle 0}$\\
3 & ${\scriptstyle e_{123}+e_{149}+e_{167}+e_{258}+e_{347}}$ &
${\scriptstyle \ttt}$ \\
\gr
4 & ${\scriptstyle e_{123}+e_{147}+e_{258}+e_{456}}$ & ${\scriptstyle 0}$ \\
\gr
& ${\scriptstyle e_{123}-2e_{148}-2e_{157}-2e_{247}+2e_{258}+e_{456}}$ &
${\scriptstyle 0}$ \\
5 & ${\scriptstyle e_{123}+e_{149}+e_{158}+e_{167}+e_{248}+e_{257}+e_{347}}$ &
${\scriptstyle \ttt}$\\
\gr
6 & ${\scriptstyle e_{149}+e_{167}+e_{258}+e_{347}}$ & ${\scriptstyle 2\ttt}$ \\
7 & ${\scriptstyle e_{123}+e_{148}+e_{157}+e_{247}+e_{456}}$ &
${\scriptstyle \ttt}$\\
\gr
8 & ${\scriptstyle e_{123}+e_{147}+e_{258}}$ & ${\scriptstyle \ttt}$\\
\gr
& ${\scriptstyle e_{123}-2e_{148}-2e_{157}-2e_{247}+2e_{258}}$ &
${\scriptstyle \ttt}$\\
9 & ${\scriptstyle e_{149}+e_{158}+e_{167}+e_{248}+e_{257}+e_{347}}$ &
${\scriptstyle 2\ttt}$\\
\gr
10 & ${\scriptstyle e_{123}+e_{148}+e_{157}+e_{247}}$ & ${\scriptstyle 2\ttt}$\\
11 & ${\scriptstyle e_{147}+e_{258}}$ & ${\scriptstyle \ttt}$ \\
& ${\scriptstyle e_{147}-e_{158}-e_{248}-e_{257}}$ & ${\scriptstyle \ttt}$ \\
\gr
12 & ${\scriptstyle e_{123}+e_{147}+e_{456}}$ & ${\scriptstyle \ssl(2,\R)}$\\
13 & ${\scriptstyle e_{148}+e_{157}+e_{247}}$ & ${\scriptstyle 3\ttt}$\\
\gr
14 & ${\scriptstyle e_{123}+e_{147}}$ & ${\scriptstyle \ssl(2,\R)+\ttt}$\\
15 & ${\scriptstyle e_{147}}$ & ${\scriptstyle \ssl(2,\R)+2\ttt}$\\
\gr
16 & ${\scriptstyle e_{123}+e_{456}}$ & ${\scriptstyle \ssl(3,\R)}$\\
17 & ${\scriptstyle e_{123}}$ & ${\scriptstyle \ssl(3,\R)+\ttt}$\\
\gr
18 & ${\scriptstyle 0}$ & ${\scriptstyle \ssl(3,\R)+2\ttt}$\\
\hline
\end{longtable}

\begin{longtable}{|r|l|l|}
              \caption{Nilpotent parts of mixed elements with semisimple part
  $p^{5,2}_{\lambda}$}\label{tab:fam5_2}
\endfirsthead
\hline
\endhead
\hline
\endfoot
\endlastfoot

\hline
No. &\qquad\quad Reps. of nilpotent parts & $\qquad\mathfrak{z}_0(p,h,e,f)$ \\
\hline
1 & ${\scriptstyle 2e_{138}+e_{139}+e_{147}+2e_{157}+2e_{237}+e_{345}-e_{389}-e_{468}+
  \tfrac{1}{2}e_{469}+\tfrac{1}{2}e_{479}+2e_{568}-e_{569}-2e_{578}}$ &
${\scriptstyle 0}$\\
\gr
2 & ${\scriptstyle
  -\tfrac{1}{2}e_{134}+e_{135}-e_{178}+\tfrac{1}{2}e_{179}-\tfrac{1}{4}e_{248}
+\tfrac{1}{8}e_{249}+\tfrac{1}{2}e_{258}-\tfrac{1}{4}e_{259}+e_{345}+e_{367}-e_{389}
+\tfrac{1}{2}e_{456}+\tfrac{1}{2}e_{479}-2e_{578}+\tfrac{1}{2}e_{689}}$ &
${\scriptstyle 0}$\\
3 & ${\scriptstyle -\tfrac{1}{2}e_{126}+e_{134}-2e_{135}+2e_{178}-e_{179}-e_{248}-
  \tfrac{1}{2}e_{249}-2e_{258}-e_{259}-2e_{367}}$ & ${\scriptstyle \uuu}$ \\
& ${\scriptstyle \tfrac{1}{2}e_{126}+e_{134}-2e_{135}+2e_{178}-e_{179}-e_{248}-
  \tfrac{1}{2}e_{249}-2e_{258}-e_{259}-2e_{367}}$ & ${\scriptstyle \uuu}$ \\
\gr
4 & ${\scriptstyle \tfrac{1}{2}e_{245}+\tfrac{1}{2}e_{289}+e_{345}-e_{389}+
  \tfrac{1}{4}e_{469}+\tfrac{1}{2}e_{479}+e_{568}-2e_{578}}$ &
${\scriptstyle 0}$\\
\gr
& ${\scriptstyle  \tfrac{1}{8}e_{248}+\tfrac{1}{16}e_{249}+\tfrac{1}{4}e_{258}+
  \tfrac{1}{8}e_{259}+e_{345}-e_{389}-e_{468}+\tfrac{1}{2}e_{469}+
  \tfrac{1}{2}e_{479}+2e_{568}-e_{569}-2e_{578}}$ & ${\scriptstyle 0}$\\
5 & ${\scriptstyle -\tfrac{1}{2}e_{126}+2e_{138}+e_{139}+e_{147}+\tfrac{1}{4}e_{148}-
  \tfrac{1}{8}e_{149}+2e_{157}-\tfrac{1}{2}e_{158}+\tfrac{1}{4}e_{159}+2e_{237}+
  \tfrac{1}{4}e_{346}-\tfrac{1}{2}e_{356}+\tfrac{1}{2}e_{678}-\tfrac{1}{4}e_{679}}$
& ${\scriptstyle \uuu}$ \\
  & ${\scriptstyle \tfrac{1}{2}e_{126}+2e_{138}+e_{139}+e_{147}+\tfrac{1}{4}e_{148}-
  \tfrac{1}{8}e_{149}+2e_{157}-\tfrac{1}{2}e_{158}+\tfrac{1}{4}e_{159}+2e_{237}+
  \tfrac{1}{4}e_{346}-\tfrac{1}{2}e_{356}+\tfrac{1}{2}e_{678}-\tfrac{1}{4}e_{679}}$
& ${\scriptstyle \uuu}$\\
\gr
6 & ${\scriptstyle 2e_{138}+e_{139}+e_{147}+2e_{157}+2e_{237}-e_{468}+
  \tfrac{1}{2}e_{469}+2e_{568}-e_{569}}$ & ${\scriptstyle \ttt+\uuu}$\\
7 & ${\scriptstyle e_{134}-2e_{135}+2e_{178}-e_{179}+e_{345}-2e_{367}-e_{389}+
  \tfrac{1}{2}e_{479}-2e_{578}}$ & ${\scriptstyle \ttt}$\\
\gr
8 & ${\scriptstyle  -\tfrac{1}{2}e_{126}-\tfrac{1}{4}e_{249}-e_{258}-
  \tfrac{1}{2}e_{456}-\tfrac{1}{2}e_{689}}$ & ${\scriptstyle \uuu}$ \\
\gr
& ${\scriptstyle -\tfrac{1}{2}e_{126}+\tfrac{1}{4}e_{137}+e_{468}-
  \tfrac{1}{2}e_{469}-2e_{568}+e_{569}}$ & ${\scriptstyle \uuu}$ \\
\gr
& ${\scriptstyle \tfrac{1}{2}e_{126}-\tfrac{1}{4}e_{249}-e_{258}-
  \tfrac{1}{2}e_{456}-\tfrac{1}{2}e_{689}}$ & ${\scriptstyle \uuu}$\\
\gr
& ${\scriptstyle \tfrac{1}{2}e_{126}+\tfrac{1}{4}e_{137}+e_{468}-
  \tfrac{1}{2}e_{469}-2e_{568}+e_{569}}$ & ${\scriptstyle \uuu}$\\
9 & ${\scriptstyle  2e_{138}+e_{139}+e_{147}+\tfrac{1}{4}e_{148}-\tfrac{1}{8}e_{149}
  +2e_{157}-\tfrac{1}{2}e_{158}+\tfrac{1}{4}e_{159}+2e_{237}+\tfrac{1}{4}e_{346}-
  \tfrac{1}{2}e_{356}+\tfrac{1}{2}e_{678}-\tfrac{1}{4}e_{679}}$ &
${\scriptstyle \ttt+\uuu}$\\
\gr
10 & ${\scriptstyle -\tfrac{1}{2}e_{126}+2e_{138}+e_{139}+e_{147}+2e_{157}+2e_{237}}$
& ${\scriptstyle \ttt+\uuu}$\\
11 & ${\scriptstyle \tfrac{1}{2}e_{245}+\tfrac{1}{2}e_{289}+\tfrac{1}{4}e_{469}+
  e_{568}}$ & ${\scriptstyle \ttt+\uuu}$\\
 & ${\scriptstyle e_{248}+\tfrac{1}{2}e_{249}+2e_{258}+e_{259}+\tfrac{1}{8}e_{468}-
  \tfrac{1}{16}e_{469}-\tfrac{1}{4}e_{568}+\tfrac{1}{8}e_{569}}$ &
${\scriptstyle \ttt+\uuu}$\\
\gr
12 & ${\scriptstyle e_{345}-e_{389}-e_{468}+\tfrac{1}{2}e_{469}+\tfrac{1}{2}e_{479}+
  2e_{568}-e_{569}-2e_{578}}$ & ${\scriptstyle \ssl(2,\R)}$ \\
13 & ${\scriptstyle e_{134}-2e_{135}+2e_{178}-e_{179}-2e_{367}}$ &
${\scriptstyle 2\ttt+\uuu}$ \\
\gr
14 & ${\scriptstyle -\tfrac{1}{2}e_{126}-2e_{137}}$ &
${\scriptstyle \ssl(2,\R)+\uuu}$ \\
\gr
& ${\scriptstyle \tfrac{1}{2}e_{126}-2e_{137}}$ &
${\scriptstyle \ssl(2,\R)+\uuu}$\\
15 & ${\scriptstyle -e_{468}+\tfrac{1}{2}e_{469}+2e_{568}-e_{569}}$ &
${\scriptstyle \ssl(2,\R)+\ttt+\uuu}$\\
\gr
16 & ${\scriptstyle e_{345}-e_{389}+\tfrac{1}{2}e_{479}-2e_{578}}$ &
${\scriptstyle \ssl(3,\R)}$\\
17 & ${\scriptstyle -e_{126}}$ & ${\scriptstyle \ssl(3,\R)+\uuu}$\\
& ${\scriptstyle e_{126}}$ & ${\scriptstyle \ssl(3,\R)+\uuu}$\\
\gr
18 & ${\scriptstyle 0}$ & ${\scriptstyle \ssl(3,\R)+\ttt+\uuu}$\\
\hline
\end{longtable}

\begin{longtable}{|r|l|l|}
                \caption{Nilpotent parts of mixed elements with semisimple part
  $p^{6,1}_{\lambda}$}\label{tab:fam6_1}
\endfirsthead
\hline
\endhead
\hline
\endfoot
\endlastfoot

\hline
No. &\qquad\quad Reps. of nilpotent parts & $\qquad\mathfrak{z}_0(p,h,e,f)$ \\
\hline
1 & ${\scriptstyle e_{159}+e_{168}+e_{249}+e_{258}+e_{267}+e_{347}}$ & 0\\
\gr
2 & ${\scriptstyle e_{159}+e_{168}+e_{249}+e_{257}+e_{258}+e_{347}}$ & 0 \\
3 & ${\scriptstyle e_{149}+e_{158}+e_{167}+e_{248}+e_{259}+e_{347}}$ & 0\\
& ${\scriptstyle -e_{148}-e_{159}-e_{249}+e_{258}-e_{267}-e_{357}}$ & 0\\
\gr
4 & ${\scriptstyle e_{149}+e_{158}+e_{248}+e_{257}+e_{367}}$ &
${\scriptstyle \ttt}$ \\
5 & ${\scriptstyle e_{149}+e_{167}+e_{168}+e_{257}+e_{348}}$ &
${\scriptstyle \ttt}$ \\
\gr
6 & ${\scriptstyle e_{149}+e_{158}+e_{248}+e_{267}+e_{357}}$ &
${\scriptstyle \ttt}$\\
7 & ${\scriptstyle e_{149}+e_{158}+e_{167}+e_{248}+e_{357}}$ &
${\scriptstyle \ttt}$\\
\gr
8 & ${\scriptstyle e_{149}+e_{167}+e_{258}+e_{347}}$ &
${\scriptstyle 2\ttt}$\\
9 & ${\scriptstyle e_{147}+e_{158}+e_{258}+e_{269}}$ &
${\scriptstyle 2\ttt}$ \\
& ${\scriptstyle 2e_{147}-e_{159}+e_{168}-e_{249}-2e_{257}}$ &
${\scriptstyle \ttt + \uuu}$ \\
\gr
10 & ${\scriptstyle e_{149}+e_{158}+e_{167}+e_{248}+e_{257}+e_{347}}$ & 0 \\
11 & ${\scriptstyle e_{149}+e_{167}+e_{248}+e_{357}}$ &
${\scriptstyle 2\ttt}$\\
\gr
12 & ${\scriptstyle e_{149}+e_{167}+e_{247}+e_{258}}$ & ${\scriptstyle 2\ttt}$\\
13 & ${\scriptstyle e_{149}+e_{158}+e_{167}+e_{248}+e_{257}}$ &
${\scriptstyle \ttt}$\\
\gr
14 & ${\scriptstyle e_{149}+e_{157}+e_{168}+e_{247}+e_{348}}$ &
${\scriptstyle \ssl(2,\R)}$ \\
15 & ${\scriptstyle e_{158}+e_{169}+e_{247}}$ &
${\scriptstyle \ssl(2,\R)+2\ttt}$\\
\gr
16 & ${\scriptstyle e_{149}+e_{158}+e_{167}+e_{247}}$ &
${\scriptstyle 2\ttt}$\\
17 & ${\scriptstyle e_{148}+e_{157}+e_{249}+e_{267}}$ &
${\scriptstyle \ssl(2,\R)+\ttt}$\\
\gr
18 & ${\scriptstyle e_{147}+e_{158}+e_{248}+e_{259}}$ &
${\scriptstyle \ssl(2,\R)+\ttt}$\\
19 & ${\scriptstyle e_{149}+e_{157}+e_{248}}$ & ${\scriptstyle 3\ttt}$\\
\gr
20 & ${\scriptstyle e_{147}+e_{258}}$ & ${\scriptstyle 4\ttt}$\\
\gr
& ${\scriptstyle 2e_{147}-2e_{158}-2e_{248}-2e_{257}}$ &
${\scriptstyle 2\ttt + 2\uuu}$\\
21 & ${\scriptstyle e_{148}+e_{157}+e_{247}}$ & ${\scriptstyle 3\ttt}$\\

\gr
22 & ${\scriptstyle  e_{147}+e_{158}+e_{169}}$ &
${\scriptstyle \ssl(2,\R)+\ssl(3,\R)}$\\
23 & ${\scriptstyle e_{147}+e_{158}}$ &
${\scriptstyle 2\ssl(2,\R)+2\ttt}$\\
\gr
24 & ${\scriptstyle e_{147}}$ & ${\scriptstyle 3\ssl(2,\R)+2\ttt}$\\
25 & ${\scriptstyle 0}$ & ${\scriptstyle 3\ssl(3,\R)}$\\
\hline
\end{longtable}

\bigskip

\begin{longtable}{|r|l|l|}
\caption{Nilpotent parts of mixed elements with semisimple part
  $p^{6,2}_{\lambda}$}\label{tab:fam6_2}
\endfirsthead
\hline
\endhead
\hline
\endfoot
\endlastfoot

\hline
No. &\qquad\quad Reps. of nilpotent parts & $\qquad\mathfrak{z}_0(p,h,e,f)$ \\
\hline
1 & ${\scriptstyle e_{139}-e_{148}+2e_{157}+e_{245}+e_{289}+2e_{367}}$ & 0\\
\gr
2 & ${\scriptstyle -2e_{135}-e_{179}+\tfrac{1}{2}e_{234}+e_{278}+e_{368}+
  \tfrac{1}{2}e_{467}-e_{569}}$ & 0\\
3 & ${\scriptstyle -2e_{135}-e_{145}-e_{179}-e_{189}+\tfrac{1}{2}e_{234}+e_{278}+
  e_{368}+\tfrac{1}{2}e_{467}}$ & 0\\
& ${\scriptstyle e_{145}+e_{189}-2e_{237}-e_{248}+e_{346}+2e_{678}}$ & 0 \\
\gr
4 & ${\scriptstyle e_{159}+2e_{237}-e_{248}-\tfrac{1}{4}e_{249}-e_{258}-
  \tfrac{1}{2}e_{456}-\tfrac{1}{2}e_{689}}$ &  ${\scriptstyle \uuu}$ \\
\gr
& ${\scriptstyle -4e_{159}+\tfrac{1}{32}e_{237}-\tfrac{1}{2}e_{238}-
  \tfrac{1}{8}e_{239}-\tfrac{1}{4}e_{247}+20e_{248}+5e_{249}-\tfrac{1}{4}e_{257}+
  20e_{258}+4e_{259}-\tfrac{1}{4}e_{356}+2e_{456}-\tfrac{1}{8}e_{679}+2e_{689}}$ &
${\scriptstyle \uuu}$\\
5 &  ${\scriptstyle e_{159}-\tfrac{1}{2}e_{239}-e_{248}-e_{257}+e_{356}+
  \tfrac{1}{2}e_{679}}$ & ${\scriptstyle \uuu}$\\
& ${\scriptstyle -e_{159}-\tfrac{1}{2}e_{239}-e_{248}-e_{257}+e_{356}+
  \tfrac{1}{2}e_{679}}$ & ${\scriptstyle \uuu}$ \\
\gr
6 & ${\scriptstyle -e_{134}-2e_{178}+2e_{235}+e_{279}-e_{569}}$ &
${\scriptstyle \ttt}$\\
7 & ${\scriptstyle e_{139}-e_{148}+2e_{157}+e_{238}+\tfrac{1}{2}e_{247}+
  \tfrac{1}{2}e_{346}+e_{678}}$ & ${\scriptstyle \uuu}$ \\
& ${\scriptstyle -4e_{135}-\tfrac{1}{4}e_{148}-2e_{179}-\tfrac{1}{2}e_{238}-
  \tfrac{1}{4}e_{247}+\tfrac{1}{4}e_{346}+\tfrac{1}{2}e_{678}}$ &
${\scriptstyle \uuu}$ \\
\gr
8 & ${\scriptstyle  -e_{145}-e_{189}+2e_{237}+e_{468}}$ &
${\scriptstyle \ttt+\uuu}$\\
9 & ${\scriptstyle e_{235}+\tfrac{1}{2}e_{279}-\tfrac{1}{2}e_{369}+e_{468}-e_{567}}$
& ${\scriptstyle \ttt+\uuu}$ \\
& ${\scriptstyle \tfrac{1}{2}e_{237}+e_{248}+e_{259}+\tfrac{1}{2}e_{469}+2e_{568}}$
& ${\scriptstyle 2\uuu}$ \\
& ${\scriptstyle \tfrac{1}{2}e_{237}-e_{248}-e_{259}+\tfrac{1}{2}e_{469}+2e_{568}}$
& ${\scriptstyle 2\uuu}$  \\
\gr
10 & ${\scriptstyle  e_{159}+2e_{237}-\tfrac{1}{2}e_{239}-\tfrac{1}{2}e_{249}-
  e_{257}-2e_{258}-e_{356}-\tfrac{1}{2}e_{679}}$ & 0 \\
11 & ${\scriptstyle -2e_{135}-e_{179}+\tfrac{1}{2}e_{245}+\tfrac{1}{2}e_{289}-
  \tfrac{1}{4}e_{469}-e_{568}}$ & ${\scriptstyle \ttt+\uuu}$\\
\gr
12 & ${\scriptstyle  -2e_{135}-e_{179}+e_{468}-e_{569}}$ &
${\scriptstyle \ttt+\uuu}$\\
\gr
& ${\scriptstyle -2e_{139}-4e_{157}+\tfrac{1}{4}e_{468}+4e_{569}}$ &
${\scriptstyle \ttt+\uuu}$\\
13 & ${\scriptstyle -2e_{138}-e_{147}+e_{159}-e_{239}-2e_{257}}$ &
${\scriptstyle \ttt}$\\
\gr
14 & ${\scriptstyle  e_{159}-\tfrac{1}{2}e_{239}-\tfrac{1}{4}e_{249}-e_{257}-e_{258}
  +e_{356}-\tfrac{1}{2}e_{456}+\tfrac{1}{2}e_{679}-\tfrac{1}{2}e_{689}}$
& ${\scriptstyle \ssl(2,\R)}$ \\
15 & ${\scriptstyle  -2e_{135}-e_{179}+e_{468}}$ &
${\scriptstyle \ssl(2,\R)+\ttt+\uuu}$\\
& ${\scriptstyle \tfrac{1}{4}e_{137}+8e_{159}-e_{468}}$ &
${\scriptstyle \su(2)+\ttt+\uuu}$\\
\gr
16 & ${\scriptstyle  -2e_{137}+\tfrac{1}{2}e_{149}+2e_{158}-e_{259}}$ &
${\scriptstyle \ttt+\uuu}$\\
17 & ${\scriptstyle e_{235}+\tfrac{1}{2}e_{245}+\tfrac{1}{2}e_{279}+
  \tfrac{1}{2}e_{289}-\tfrac{1}{2}e_{369}+\tfrac{1}{4}e_{469}-e_{567}+e_{568}}$
& ${\scriptstyle \ssl(2,\R)+\ttt}$\\
\gr
18 & ${\scriptstyle  e_{159}+2e_{237}-\tfrac{1}{2}e_{239}-e_{257}+e_{356}+
  \tfrac{1}{2}e_{679}}$ & ${\scriptstyle \ssl(2,\R)+\uuu}$\\
\gr
& ${\scriptstyle \tfrac{44}{5}e_{137}-3e_{139}-6e_{157}+2e_{159}-
  \tfrac{28}{5}e_{237}+e_{239}+2e_{257}+10e_{356}+5e_{679}}$ &
${\scriptstyle \su(2)+\uuu}$\\
\gr
& ${\scriptstyle \tfrac{4}{5}e_{137}+e_{139}+2e_{157}+2e_{159}+\tfrac{12}{5}e_{237}
  +e_{239}+2e_{257}-10e_{356}-5e_{679}}$ & ${\scriptstyle \su(2)+\uuu}$\\
19 & ${\scriptstyle e_{159}-\tfrac{1}{2}e_{239}-e_{257}-e_{259}+e_{356}+
  \tfrac{1}{2}e_{679}}$ & ${\scriptstyle \ttt+2\uuu}$\\
& ${\scriptstyle -4e_{159}+e_{239}+2e_{257}+4e_{259}-2e_{356}-e_{679}}$ &
${\scriptstyle \ttt+2\uuu}$\\
\gr
20 & ${\scriptstyle e_{235}+\tfrac{1}{2}e_{279}-\tfrac{1}{2}e_{369}-e_{567}}$ &
${\scriptstyle 2\ttt+2\uuu}$\\
\gr
& ${\scriptstyle \tfrac{1}{4}e_{148}+4e_{159}-\tfrac{1}{2}e_{469}-2e_{568}}$ &
${\scriptstyle 2\ttt+2\uuu}$\\
21 & ${\scriptstyle -2e_{135}-e_{179}-e_{569}}$ & ${\scriptstyle 2\ttt+\uuu}$\\
\gr
22 & ${\scriptstyle e_{139}-e_{148}+2e_{157}}$ &
${\scriptstyle \ssl(2,\R)+\su(1,2)}$\\
\gr
& ${\scriptstyle 18e_{137}+22e_{138}+4e_{139}+11e_{147}+14e_{148}+\tfrac{5}{2}
  e_{149}+8e_{157}+10e_{158}+2e_{159}}$ & ${\scriptstyle \ssl(2,\R)+\su(3)}$\\
23 & ${\scriptstyle -2e_{135}-e_{179}}$ & ${\scriptstyle 2\ssl(2,\R)+\ttt+\uuu}$\\
& ${\scriptstyle e_{137}+2e_{159}}$ & ${\scriptstyle \ssl(2,\R)+\su(2)+\ttt+\uuu}$\\
\gr
24 & ${\scriptstyle -2e_{137}}$ & ${\scriptstyle \ssl(2,\R)+\ssl(2,\C)+\ttt+\uuu}$\\
25 & ${\scriptstyle 0}$ & ${\scriptstyle \ssl(3,\R)+\ssl(3,\C)}$\\
\hline
\end{longtable}


\section{Real Galois cohomology}\label{sec:galcohom}

\subsection{Group cohomology for a group of order 2}\label{s:group-coh}

In this section we give an alternative  definition of
$\Ho^1(\Gamma,A)$ and $\Ho^2(\Gamma,A)$ and establish   their  properties.

\begin{definition}\label{d:H1-nonab}
Let $A$ be a $\Gamma$-group, {\em not necessarily abelian},
where $\Gamma=\{1,\gamma\}$ is a group of order 2.
We define
\begin{equation}\label{d:Z1}
\Zl^1\hm A=\{c\in A\mid c\cdot{}^\gamma\kern-0.8pt c=1\}.
\end{equation}
We say that an element $c$ as in \eqref{d:Z1} is a {\em $1$-cocycle of\/ $\Gamma$ in $A$}.
The group $A$ acts on $\Zl^1\hm A$ on the right by
\[ c*a=a^{-1}\cdot c\cdot\upgam a\quad \text{for } c\in \Zl^1\hm A,\ a\in A.\]
We say that the cocycles $c$ and $c*a$ are {\em cohomologous} or {\em equivalent}.
We denote by  $\Ho^1(\Gamma,A)$, or for brevity $\Ho^1\hm A$,
the set of equivalence classes, that is,
the set of orbits of $A$ in $\Zl^1\hm A$.
If $c\in \Zl^1\hm A$, we denote by $[c]\in \Ho^1\hm A$ its cohomology class.
\end{definition}

In general $\Ho^1\hm A$ has no natural group structure,
but it has a {\em neutral element} denoted by  $[1]$,
 the class of the unit element $1\in \Zl^1\hm A\subseteq A$.

\begin{definition}\label{d:H1}
Let $A$ be an {\em abelian} $\Gamma$-group, where $\Gamma$ is a group of order 2.
We define abelian groups
\[\Zl^1\hm A=\{c\in A\mid c\cdot\hm\upgam c=1\},\quad
   \Bd^1\hm A=\{a^{-1}\cdot\hm\upgam a\mid a\in A\}, \quad
   \Ho^1\hm A=\Zl^1\hm A\hs/\hs\Bd^1\hm A.\]
\end{definition}

\begin{definition}\label{d:H2}
Let $A$ be an {\em abelian} $\Gamma$-group, where $\Gamma$ is a group of order 2.
We define abelian groups
\[\Zl^2\hm A=A^\Gamma\ce \{c\in A\mid \upgam c=c\},\quad
    \Bd^{2}\hm A=\{a\cdot\hm\upgam a\mid a\in A\},\quad
    \Ho^2\hm A=\Zl^2\hm A\hs/\hs\Bd^2\hm A.\]
\end{definition}

\begin{remark}\label{r:def}
Definitions \ref{d:H1} and \ref{d:H2}
(given for  a group $\Gamma$ of order 2 only!)
are equivalent to the standard ones.
Namely, for $c\in \Zl^1\hm A$ we construct a function of one variable
\[f_c\colon\Gamma\to A,\quad f_c(1)=1,\ f_c(\gamma)=c,\]
which is a 1-cocycle in the sense of  \cite[Section I.5.1]{Serre1997}.
Similarly, for $c\in \Zl^2\hm A$ we construct a function of two variables
\[ \phi_c\colon \Gamma\times\Gamma\to A,\quad
     \phi_c(1,1)=\phi_c(\gamma,1)=\phi_c(1,\gamma)=0,\ \,\phi_c(\gamma,\gamma)=c,\]
which is a 2-cocycle in the sense of Serre \cite[Section I.2.2]{Serre1997}.
In this way we obtain canonical isomorphisms of pointed sets
(for Definition \ref{d:H1-nonab}), and of abelian groups
(for Definitions \ref{d:H1} and \ref{d:H2})
between the cohomology sets and groups defined above
and the corresponding cohomology sets and groups defined in \cite{Serre1997}.
\end{remark}

We shall need the following result of Borel and Serre:

\begin{proposition}[{\cite[Section I.5.4, Corollary 1 of Proposition 36]{Serre1997}}]
\label{p:serre}
Let $B$ be a $\Gamma$-group, $A\subseteq B$ be a $\Gamma$-subgroup, and $Y=B/A$,
which has a natural structure of a $\Gamma$-set.
Then $\Bd^\Gamma$ naturally acts on $Y^\Gamma$,
and the connecting map $\delta$ in the cohomology exact sequence
\[1\to A^\Gamma\to \Bd^\Gamma\to Y^\Gamma\labelto{\delta} \Ho^1\hm A\to \Ho^1 B\]
induces a canonical bijection
between the set of orbits $Y^\Gamma/\Bd^\Gamma$ and the kernel
\[\ker\big[\hs\Ho^1\hm A\to \Ho^1 B\hs\big].\]
\end{proposition}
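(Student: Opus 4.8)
The plan is to prove the proposition directly from the concrete description of $\Ho^1$ for a group of order $2$ given in Definition \ref{d:H1-nonab}, making the connecting map $\delta$ completely explicit. Recall that $Y=B/A$ consists of left cosets $bA$, with $\Gamma$ acting by $\gamma\cdot(bA)=(\upgam b)A$, and that $B^\Gamma$ acts on $Y^\Gamma$ by left translation $b_0\cdot(bA)=(b_0 b)A$ (this preserves $Y^\Gamma$ since $\upgam b_0=b_0$). Given $y=bA\in Y^\Gamma$, the condition $\upgam(bA)=bA$ says exactly that $c:=b^{-1}\cdot\upgam b$ lies in $A$. First I would check that $c\in\Zl^1 A$: using $\gamma^2=\id$ one computes $\upgam c=(\upgam b)^{-1}b$, whence $c\cdot\upgam c=1$. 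I then set $\delta(y)=[c]=[\,b^{-1}\,\upgam b\,]\in\Ho^1 A$. That this is independent of the representative $b$ of the coset is the first routine verification: replacing $b$ by $ba_0$ with $a_0\in A$ turns $c$ into $a_0^{-1}c\,\upgam a_0=c*a_0$, a cohomologous cocycle.

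Next I would record two compatibility facts. The class $\delta(y)$ lands in $\ker[\Ho^1 A\to\Ho^1 B]$ because in $B$ the cocycle $c=b^{-1}\,\upgam b$ equals $1*b$, hence $[c]=[1]$ in $\Ho^1 B$. And $\delta$ is constant on $B^\Gamma$-orbits: for $b_0\in B^\Gamma$ one has $(b_0 b)^{-1}\upgam(b_0 b)=b^{-1}b_0^{-1}(\upgam b_0)\upgam b=b^{-1}\upgam b$, using $\upgam b_0=b_0$. Therefore $\delta$ descends to a well-defined map $\bar\delta\colon Y^\Gamma/B^\Gamma\to\ker[\Ho^1 A\to\Ho^1 B]$.

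The substance is to show that $\bar\delta$ is a bijection. For injectivity, suppose $y=bA$ and $y'=b'A$ satisfy $\delta(y)=\delta(y')$, so $(b')^{-1}\upgam b'=a^{-1}(b^{-1}\upgam b)\upgam a$ for some $a\in A$. Since the right-hand side equals $(ba)^{-1}\upgam(ba)$, I may replace $b$ by $ba$ (which does not change the coset $y$) and assume $(b')^{-1}\upgam b'=b^{-1}\upgam b$. Rearranging gives $b(b')^{-1}=\upgam b\,(\upgam b')^{-1}=\upgam\!\big(b(b')^{-1}\big)$, so $b_0:=b(b')^{-1}\in B^\Gamma$ and $b_0\cdot y'=y$; thus $y,y'$ lie in one $B^\Gamma$-orbit. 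For surjectivity, take $[c]\in\ker[\Ho^1 A\to\Ho^1 B]$; triviality in $\Ho^1 B$ means $c=b^{-1}\upgam b$ for some $b\in B$. Then $\upgam b=bc\in bA$, so $bA\in Y^\Gamma$, and $\delta(bA)=[c]$ by construction.

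I expect the only real obstacle to be bookkeeping: keeping the left/right conventions of the twisted action $c*a=a^{-1}c\,\upgam a$ straight, and in the injectivity step correctly producing the fixed element $b_0=b(b')^{-1}$. Once the map $\delta$ is pinned down concretely, every verification reduces to a one-line manipulation using $\gamma^2=\id$; no deeper input is required, and in particular no appeal to any abelian structure, which neither $B$ nor $A$ is assumed to possess.
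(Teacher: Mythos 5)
Your proof is correct, and it is genuinely different in character from what the paper does: the paper does not prove Proposition \ref{p:serre} at all, but simply cites it as a known result of Borel--Serre (\cite[Section I.5.4, Corollary 1 of Proposition 36]{Serre1997}), valid for an arbitrary (pro)finite Galois group. Your argument instead gives a complete, self-contained verification tailored to the order-$2$ formalism of Definition \ref{d:H1-nonab}: the explicit formula $\delta(bA)=[\hs b^{-1}\hm\upgam b\hs]$, the check that this is a $1$-cocycle killed in $\Ho^1 B$, independence of the coset representative (which produces exactly the twisted action $c*a=a^{-1}c\hs\upgam a$), constancy on $B^\Gamma$-orbits, and the injectivity/surjectivity computations are all right, and each step uses only $\gamma^2=\id$ and the $\Gamma$-stability of $A$, with no abelian hypothesis. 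What the citation buys the paper is brevity and generality; what your computation buys is transparency --- in particular it makes Construction \ref{con:e-c} and the proof of Proposition \ref{prop:ssorb} (where exactly the cocycle $g^{-1}\hs\ov g$ and the relation $\sigma(g)=gc$ are used) immediate specializations rather than consequences of a black box. The one point worth flagging explicitly in a write-up is the reading of ``$\Bd^\Gamma$'' in the statement as the fixed-point group $B^\Gamma$ acting by left translation on $Y^\Gamma=(B/A)^\Gamma$; you interpret it correctly, and with that convention every identity you assert checks out.
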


\subsection{Real structures on complex algebraic groups and algebraic varieties}

\begin{subsec}\label{sss:real-group}
Let $\GG$ be a real linear algebraic group.
In the coordinate language, the reader may regard $\GG$ as
a subgroup in the general linear group $\GL_n(\C)$ (for some integer~$n$)
defined by polynomial equations with {\em real} coefficients in the matrix entries;
see Borel \cite[Section 1.1]{Borel1966}.
More conceptually, the reader may assume that $\GG$ is an affine group scheme
of finite type over $\R$; see  Milne \cite[Definition 1.1]{Milne2017}.
With any of these two equivalent  definitions, $\GG$ defines a covariant functor
\begin{equation*}
A\mapsto \GG(A)
\end{equation*}
from the category of commutative unital $\R$-algebras to the category of groups.
Applying this functor to the $\R$-algebra $\R$, we obtain a real Lie group $\GG(\R)$.
Applying this functor  to the $\R$-algebra $\C$ and to the morphism of $\R$-algebras
\[\gamma\colon \C\to\C, \quad z\mapsto \bar z\quad\text{for }z\in \C,\]
we obtain a complex Lie group $\GG(\C)$ together with an anti-holomorphic involution
$\GG(\C)\to \GG(\C),$
which will be denoted by $\sigma_\GG$.
The Galois group $\Gamma$ naturally acts on $\GG(\C)$;
namely, the complex conjugation $\gamma$ acts by $\sigma_\GG$.
We have $\GG(\R)=\GG(\C)^\Gamma$ (the subgroup of fixed points).

We shall consider the complex linear algebraic group $\GG_\C\ce \GG\times_\R\C$
obtained from $\GG$ by extension of scalars from $\R$ to $\C$.
In this article we shall denote $\GG_\C$ by $G$,
the same Latin  letter, but non-boldface.
By abuse of notation we shall identify $G$ with $\GG(\C)$;
in particular, we shall write $g\in G$ meaning that $g\in \GG(\C)$.

Since $G$ is an affine group scheme over $\C$,
we have the ring of regular function
$\C[G]=\R[\GG]\otimes_\R\C.$
Our anti-holomorphic involution $\sigma_\GG$ of $\GG(\C)$
is {\em anti-regular} in the following sense:
when acting on the ring of holomorphic functions on $G$
by sending a holomorphic function $f$ to the holomorphic function
\[(\upgam f)(g)=\upgam(f(\upgam g))\quad\text{for}\ g\in G,\]
it preserves the subring $\C[G]$ of regular functions.
An anti-regular involution of $G$ is called
also  a {\em real structure on} $G$.
\end{subsec}

\begin{remark}
If $G$ is a {\em reductive} algebraic group
over $\C$ (not necessarily connected),
then any anti-holomorphic involution of $G$ is anti-regular.
See  Adams and Ta\"\i bi \cite[Lemma 3.1]{AT2018}.
\end{remark}

\begin{subsec}
A morphism of real linear algebraic groups
$\GG\to\GG'$ induces a morphism of pairs
$(G,\sigma_\GG)\to (G',\sigma_{\GG'})$.
In this way we obtain a functor
$$\GG\rightsquigarrow(G,\sigma_\GG)$$
from the category of {\em real} linear algebraic groups
to the category of pairs $(G,\sigma)$,
where $G$ is a {\em complex} linear algebraic group
and $\sigma$ is a real structure on $G$.
By Galois descent, see
\cite[Section V.4.20,  Corollary 2 of Proposition 12]{Serre1988}
or  \cite[Theorem 2.2]{Jahnel}, or \cite[Section 6.2, Example B]{BLR},
this functor is an equivalence of categories.
In particular, any pair $(G,\sigma)$, where $G$ is
a complex linear algebraic group and $\sigma$ is a real structure on $G$,
is isomorphic to a pair coming from a real linear algebraic group $\GG$,
and any morphism of pairs $(G,\sigma)\to (G',\sigma')$ comes
from  a unique  morphism of the corresponding real algebraic groups.
\end{subsec}

\begin{subsec}
Let $\YY$ be a real {\em quasi-projective} variety.
The reader may regard $\YY$ as a locally closed subvariety
in the complex projective space $\P^n_\C$ (for some integer $n$)
defined using homogeneous polynomials with {\em real} coefficients.
As above, by functoriality we obtain a complex analytic space $\YY(\C)$
together with a {\em real structure} (anti-regular involution)
\[\mu_\YY\colon \YY(\C)\to\YY(\C).\]
We consider the base change $Y\ce \YY\times_\R\C$,
which by abuse of notation we shall identify with $\YY(\C)$.
We obtain a functor
$$\YY\rightsquigarrow(Y,\mu_\YY)$$
from the category of {\em real} quasi-projective varieties
to the category of pairs $(Y,\mu)$,
where $Y$ is a {\em complex} quasi-projective variety and $\mu$ is a real structure on $Y$.
By Galois descent
this functor is an equivalence of categories.
Here it is important that we consider {\em quasi-projective} varieties.
Note that any homogeneous space of a linear algebraic group is quasi-projective;
see, for instance, \cite[Theorem 6.8]{Borel1991}.
\end{subsec}

\begin{subsec}
From now on, when mentioning a {\em real} algebraic group $\GG$,
we shall actually work with a pair $(G,\sigma)$,
where $G$ is a {\em complex} algebraic group and $\sigma$ is a real structure on $G$.
We shall write $\GG=(G,\sigma)$.
We shall shorten ``real linear algebraic group'' to ``$\R$-group''.
Similarly,  when mentioning a {\em real} algebraic variety $\YY$,
we shall actually work with a pair $(Y,\mu)$,
where $Y$ is a {\em complex} algebraic variety and $\mu$ is a real structure on $Y$.
We shall write $\YY=(Y,\mu)$.
\end{subsec}

\subsection{Using $\Ho^1$ for finding real orbits in homogeneous spaces}

\begin{subsec}\label{sss:G-Y}
Let $\GG$ be a real algebraic group acting (over $\R$)
on a real algebraic variety $\YY$.
We write $\GG=(G,\sigma)$, $\YY=(Y,\mu)$.
Here $\mu$ is an anti-regular involution of $Y$ given by $\mu(y)=\upgam y$.
The assertion that $\GG$ acts on $\YY$ over $\R$ means
that $G$ acts on $Y$ by $(g,y)\mapsto g\cdot y$,
and this action is $\Gamma$-equivariant:
for $g\in G$, $y\in Y$ we have
\[ \upgam(g\cdot y)=\upgam g\cdot\hm\upgam y,\quad\text{that is,}
     \quad \mu(g\cdot y)=\sigma(g)\cdot \mu(y).\]
We assume that $\GG$ acts on $\YY$ {\em transitively},
that is, $G$ acts on $Y$ transitively.
\end{subsec}

\begin{subsec}
We have $\GG(\R)=G^\sigma$, $\YY(\R)=Y^\mu$.
The group $\GG(\R)$ naturally acts on $\YY(\R)$, and this action might not be transitive.
Assuming that $\YY(\R)$ is {\em non-empty}, we describe the set of orbits
$\YY(\R)/\GG(\R)$ in terms of Galois cohomology.

Let $e\in \YY(\R)=Y^\mu$ be an $\R$-point.
Let $C=\Stab_G(e)$.
Since $\sigma(e)=e$, we have $\sigma(C)=C$.
We consider the real algebraic subgroup
\[ \CC=\Stab_\GG(e)=(C,\sigma_C),\quad\text{where }\sigma_C=\sigma|_C\hs,\]
and the homogeneous space $\GG/\CC$, on which $\Gamma$ acts by $\upgam(gC)=\upgam g\cdot C$.
We have a canonical bijection
\[G/C\isoto Y, \quad gC\mapsto g\cdot e,\]
and an easy calculation shows that this bijection is $\Gamma$-equivariant.
Taking into account Proposition \ref{p:serre}, we obtain bijections
\begin{equation}\label{e:ker-G/C-Y/C}
\ker\big[\hs\Ho^1\hs\CC\to\Ho^1\hs\GG\hs\big]\isoto (G/C)^\Gamma/\GG(\R)\isoto\YY(\R)/\GG(\R).
\end{equation}
\end{subsec}

\begin{construction}\label{con:e-c}
We describe explicitly the composite bijection \eqref{e:ker-G/C-Y/C}.
Let $c\in\Zl^1\CC$ be such that $[c]\in \ker\big[\Ho^1\hs\CC\to\Ho^1\hs\GG\big]$.
Then there exists $g\in G$ such that $c=g^{-1}\cdot\upgam g$, that is, $\sigma(g)=gc$.
We set $e_c=g\cdot e\in Y$.
We compute
\[\mu(e_c)=\mu(g\cdot e)=\sigma(g)\cdot\mu(e)=gc\cdot e=g\cdot(c\cdot e)=g\cdot e=e_c\hs.\]
Thus $e_c\in\YY(\R)$.
To $[c]$ we associate the $\GG(\R)$-orbit $\GG(\R)\cdot e_c\subset\YY(\R)$.
\end{construction}

\begin{proposition}\label{p:coh-orbits}
The correspondence $c\mapsto e_c$ of Construction \ref{con:e-c} induces a bijection
 \[\ker\big[\hs\Ho^1\hs\CC\to\Ho^1\hs\GG\hs\big]\isoto\YY(\R)/\GG(\R).\]
\end{proposition}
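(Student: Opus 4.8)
The plan is to observe that Proposition \ref{p:coh-orbits} is nothing but the assertion that the explicit recipe $c\mapsto e_c$ of Construction \ref{con:e-c} computes the composite bijection \eqref{e:ker-G/C-Y/C}. That composite has already been produced from Proposition \ref{p:serre} (applied with $B=G$, $A=C$ and $Y=G/C$) followed by the $\Gamma$-equivariant identification $G/C\isoto Y$, $gC\mapsto g\cdot e$. So I would not reprove bijectivity from scratch; instead I would show that Construction \ref{con:e-c} produces precisely the image of $[c]$ under \eqref{e:ker-G/C-Y/C}, after which being a bijection is automatic.

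First I would unwind the connecting map $\delta$ of Proposition \ref{p:serre} explicitly. For $gC\in(G/C)^\Gamma$ the fixed-point condition $\upgam g\cdot C=gC$ says that $z\ce g^{-1}\cdot\upgam g$ lies in $C$, and a one-line computation gives $z\cdot\upgam z=1$, so $z\in\Zl^1\CC$ and $\delta(gC)=[z]$. By Proposition \ref{p:serre} the induced map $(G/C)^\Gamma/\GG(\R)\isoto\ker[\Ho^1\CC\to\Ho^1\GG]$ is a bijection. Now in Construction \ref{con:e-c} the chosen $g$ satisfies $\sigma(g)=gc$, i.e.\ $g^{-1}\cdot\upgam g=c$; thus $gC$ is a $\delta$-preimage of $[c]$, and under $G/C\isoto Y$ it maps to $g\cdot e=e_c$. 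Hence the composite \eqref{e:ker-G/C-Y/C} sends $[c]$ to the orbit $\GG(\R)\cdot e_c$, which is exactly the correspondence of Construction \ref{con:e-c}.

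Next I would record the independence-of-choices statements that make $[c]\mapsto\GG(\R)\cdot e_c$ literally well defined (these are implied by the abstract bijection, but it is clean to see them directly). If $g$ and $g'$ both satisfy $g^{-1}\cdot\upgam g=c=(g')^{-1}\cdot\upgam{g'}$, then $h\ce g'g^{-1}$ has $\upgam h=h$, so $h\in\GG(\R)$ and $g'\cdot e=h\cdot e_c$, leaving the orbit unchanged. If instead $c$ is replaced by a cohomologous cocycle $c*a=a^{-1}\cdot c\cdot\upgam a$ with $a\in C$, then $g'\ce ga$ satisfies $\sigma(g')=g'\cdot(c*a)$ and $e_{c*a}=ga\cdot e=g\cdot e=e_c$ because $a\in C=\Stab_G(e)$. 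Thus the assignment descends to $\ker[\Ho^1\CC\to\Ho^1\GG]$ and agrees with \eqref{e:ker-G/C-Y/C}, so it is a bijection onto $\YY(\R)/\GG(\R)$.

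The only real subtlety, and the step I would be most careful about, is the compatibility of conventions: the definition of $\Zl^1\CC$ in Definition \ref{d:H1-nonab}, the twisted action $c*a=a^{-1}\cdot c\cdot\upgam a$, and the explicit formula $\delta(gC)=[g^{-1}\cdot\upgam g]$ all involve left/right and inverse choices, and one must check they line up so that the $g$ with $\sigma(g)=gc$ maps to the class $[c]$ itself, and not to $[c^{-1}]$ or to a shifted class. Once these bookkeeping conventions are reconciled, the proof contains no further difficulty, as everything is reduced to Proposition \ref{p:serre} and the $\Gamma$-equivariant isomorphism $G/C\isoto Y$.
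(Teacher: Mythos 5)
Your proof is correct and follows essentially the same route as the paper, which simply cites Proposition \ref{p:serre}; you have merely made explicit the unwinding of the connecting map $\delta$, the identification $G/C\isoto Y$, and the well-definedness checks that the paper leaves implicit. The convention bookkeeping you flag does line up: for $g$ with $\sigma(g)=gc$ one gets $\delta(gC)=[g^{-1}\cdot\upgam g]=[c]$ exactly, so no inverse or shift appears.
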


\begin{proof}
The proposition follows from Proposition \ref{p:serre}.
\end{proof}

\begin{subsec}
Let $\GG_0$ be a real algebraic group
embedded into a larger real algebraic group $\GG$,
and let  $h\in\g\ce \Lie\GG$.
Then $\GG_0$ acts on $\g$ via the adjoint representation of $\GG$.
Let $\YY_h$ be the $\GG_0$-orbit of $h$, that is, the real algebraic variety defined by
\[Y_h=\{h'\in \g^\cC \mid h'=\Ad(g)\cdot h \ \text{ for some } g\in G_0\},\]
where $\g^\cC=\g\otimes_\R\C$.
Let $\ZZ_h$ denote the centralizer of $h$ in $\GG_0$.
The group $\GG_0$ acts transitively (over $\C$) on the left on $\YY_h$ by
\[(g,h')\mapsto \Ad(g)\cdot h',\quad g\in G,\ h'\in Y_h\]
with stabilizer $\ZZ_h$ of $h$.
\end{subsec}

\begin{corollary}[from Proposition \ref{p:coh-orbits}]
The set of the $\GG_0(\R)$-conjugacy classes in $\YY(\R)$
is in a canonical bijection with
\[\ker\big[\Ho^1\hs \ZZ_h\to\Ho^1\hs\GG_0\big].\]
\end{corollary}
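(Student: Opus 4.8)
The plan is to deduce this corollary directly from Proposition~\ref{p:coh-orbits} by checking that the setup of Subsection~\ref{sss:G-Y} applies verbatim to the adjoint action of $\GG_0$ on the orbit variety $\YY_h$. First I would observe that $\YY_h$ is a homogeneous space for $\GG_0$: by construction $G_0$ acts transitively on $Y_h$ over $\C$, and since $h\in\g=\Lie\GG_0$ is a \emph{real} point (i.e.\ $\sigma(h)=h$, equivalently $\mu_{\YY_h}(h)=h$), the variety $\YY_h$ carries a real structure for which the adjoint action is $\Gamma$-equivariant. This last point is the only thing requiring a small check: because $\GG_0$ is defined over $\R$, the adjoint representation $\Ad$ is defined over $\R$, so $\upgam(\Ad(g)\cdot h')=\Ad(\upgam g)\cdot\upgam{h'}$, which is exactly the equivariance condition $\mu(g\cdot y)=\sigma(g)\cdot\mu(y)$ demanded in Subsection~\ref{sss:G-Y}.

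Next I would verify the two hypotheses needed to invoke Proposition~\ref{p:coh-orbits}. The stabilizer of the base point $h$ is $\ZZ_h=\Stab_{\GG_0}(h)$, the centralizer of $h$ in $\GG_0$; since $\sigma(h)=h$ we have $\sigma(Z_h)=Z_h$, so $\ZZ_h$ is indeed a real algebraic subgroup, playing the role of $\CC$ in the proposition. The hypothesis that $\YY_h(\R)$ be non-empty is automatic here, since the base point $h$ itself is a real point of $\YY_h$. With $\GG\rightsquigarrow\GG_0$, $\YY\rightsquigarrow\YY_h$, $\CC\rightsquigarrow\ZZ_h$ and $e\rightsquigarrow h$, Proposition~\ref{p:coh-orbits} yields a canonical bijection
\[
\ker\big[\hs\Ho^1\hs\ZZ_h\to\Ho^1\hs\GG_0\hs\big]\isoto\YY_h(\R)/\GG_0(\R),
\]
and the right-hand side is precisely the set of $\GG_0(\R)$-conjugacy classes in $\YY_h(\R)$, which is the assertion of the corollary.

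I do not expect any serious obstacle: the corollary is essentially a specialization of the preceding proposition to the adjoint action, and the proof amounts to matching notation and confirming the two formal hypotheses (transitivity over $\C$ and non-emptiness of the real locus). The only point that deserves an explicit sentence is the $\Gamma$-equivariance of the adjoint action, which follows because $\GG_0$ and hence $\Ad$ are defined over $\R$; everything else is a direct transcription. Accordingly I would keep the proof to a single short paragraph, stating that one applies Proposition~\ref{p:coh-orbits} to the transitive action of $\GG_0$ on the homogeneous space $\YY_h$ with base point the real element $h$ and stabilizer $\ZZ_h$.
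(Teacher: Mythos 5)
Your proposal is correct and matches the paper exactly: the paper gives no separate proof, treating the corollary as an immediate specialization of Proposition \ref{p:coh-orbits} to the transitive $\GG_0$-action on the orbit $\YY_h$ with real base point $h$ and stabilizer $\ZZ_h$, which is precisely what you spell out. The only nitpick is that the paper sets $\g\ce\Lie\GG$ (the ambient group), not $\Lie\GG_0$, but this does not affect your argument since the adjoint action of $\GG_0$ on $\g$ is still defined over $\R$.
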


\begin{corollary}\label{c:Rconj}
If $\Ho^1\hs \ZZ_h=1$, then any element $h'\in\g$ that is $\GG_0$-conjugate to $h$
over $\C$, is $\GG_0$-conjugate to $h$ over $\R$.
\end{corollary}

\subsection{Using $\Ho^2$ for finding a real point in a complex orbit}

\begin{subsec}
\label{sec:findreal}
In order to use Construction \ref{con:e-c}, we need a real point $e\in \YY(\R)$.
We describe a general method of finding a real point using second Galois cohomology,
following an idea of Springer \cite[Section 1.20]{Springer1966};
see also \cite[Section 7.7]{Borovoi1993} and \cite[Section 2]{DLA2019}.

Let $\GG=(G,\sigma)$ and $\YY=(Y,\mu)$ be as in Subsection \ref{sss:G-Y}
We choose a {\em  complex} point $e\in Y$ and set $C=\Stab_G(e)\subset G$.
In this article we consider only the special case when
the complex algebraic group $C$ is {\em abelian};
this suffices for our applications.

Consider  $\mu(e)\in Y$. We have
\[e=g\cdot\mu(e)\quad\text{for some } g\in G,\]
because $Y$ is homogeneous.
Since $\mu^2=\id$, we have
\begin{align*}
e=\mu(\mu(e))=\mu(g^{-1}\cdot e)=&\sigma(g^{-1})\cdot\mu(e)
=\sigma(g^{-1})\cdot g^{-1}\cdot e=(g\cdot\sigma(g))^{-1}\cdot e.
\end{align*}
Thus $g\cdot\sigma(g)\in C$.
We set
\[d=g\cdot\sigma(g)\in C.\]

We define an anti-regular involution $\nu$ of $C$.
Let $c\in C$.  We calculate:
\begin{align*}
c\cdot e=e\quad &\Rightarrow \quad \sigma(c)\cdot\mu(e)=\mu(e)\quad \Rightarrow \quad
   g\cdot\sigma(c)\cdot\mu(e)=g\cdot\mu(e) \quad \Rightarrow \\
&\Rightarrow\quad  g\hs\sigma(c)g^{-1}\cdot g\cdot\mu(e)=g\cdot\mu(e) \quad \Rightarrow \quad
   g\hs\sigma(c)g^{-1}\cdot e=e.
\end{align*}
We see that $ g\hs\sigma(c)g^{-1}\in C$. We set
\begin{equation}\label{e:nu}
\nu(c)=g\hs\sigma(c)g^{-1}\in C.
\end{equation}
We compute
\begin{align*}
\nu^2(c)=\nu(\nu(c))=g\cdot \sigma(\nu(c))\cdot g^{-1}
   &=g\cdot\sigma(g\hs\hs\sigma(c)\hs g^{-1})\cdot g^{-1}\\
&=(g\hs\hs\sigma(g))\cdot c\cdot (g\hs\hs\sigma(g))^{-1}=c,
\end{align*}
because $g\hs\hs \sigma(g)\in C$ and $C$ is abelian.
Thus $\nu^2=1$, that is, $\nu$ is an involution,
and it is clear from \eqref{e:nu} that it is anti-regular.
We have
\[\nu(d)=g\hs\sigma(d) g^{-1}=g\hs\sigma(g)\hs g\hs g^{-1}=g\hs\sigma(g)=d.\]
This $d\in C^\nu$.
We have constructed an anti-regular involution
\[\nu\colon C\to C,\quad c\mapsto g\hs\hs\sigma(c)g^{-1}\]
and an element
\[d=g\hs\hs\sigma(g)\in C^\nu.\]
\end{subsec}

\begin{subsec}
We consider  the real algebraic group $\CC=(C,\nu)$.
Recall that
\[\Ho^2\hs\CC=\Zl^2\hs\CC/\Bd^2\hs\CC,\]
where
\begin{align*}
\Zl^2\hs\CC=C^\nu\ce \{c\in C\mid\nu(c)=c\},\qquad
\Bd^2\hs\CC=\{c'\cdot\nu(c')\mid c'\in C\}.
\end{align*}

We define the {\em class of $Y$}
\[\Cl(Y)=[d]\in \Ho^2\hs\CC.\]
\end{subsec}

\begin{proposition}\label{p:no-mu-fixed}
If $Y$ has a $\mu$-fixed point, then $\Cl(Y)=1$.
\end{proposition}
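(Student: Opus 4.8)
The goal is to show that the class $\Cl(Y)=[d]\in\Ho^2\CC$ is trivial, i.e. that $d$ lies in $\Bd^2\CC=\{c'\cdot\nu(c')\mid c'\in C\}$. So the plan is to use the hypothesized $\mu$-fixed point to manufacture an explicit $c'\in C$ with $d=c'\cdot\nu(c')$. First I would invoke the hypothesis: let $y_0\in Y$ satisfy $\mu(y_0)=y_0$. Since $G$ acts transitively, write $y_0=g_0\cdot e$ for some $g_0\in G$. Recall from the construction that $e=g\cdot\mu(e)$, so $\mu(e)=g^{-1}\cdot e$. Applying $\mu$ to $y_0=g_0\cdot e$ and using the equivariance $\mu(g_0\cdot e)=\sigma(g_0)\cdot\mu(e)$ gives $y_0=\sigma(g_0)g^{-1}\cdot e$. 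Comparing this with $y_0=g_0\cdot e$ and cancelling shows that $c\ce g_0^{-1}\sigma(g_0)g^{-1}$ fixes $e$, hence $c\in\Stab_G(e)=C$.

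Next I would solve for $g$ in terms of this data. From the definition of $c$ one gets $g=c^{-1}m$, where $m\ce g_0^{-1}\sigma(g_0)$. The key preparatory observation is that $m$ satisfies $\sigma(m)=m^{-1}$, which is immediate from $\sigma^2=\id$. Feeding this into $d=g\sigma(g)$ and expanding $\sigma(g)=\sigma(c)^{-1}m^{-1}$ yields the compact formula $d=c^{-1}\,m\,\sigma(c)^{-1}\,m^{-1}$.

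The final — and most delicate — step is to recognise this right-hand side as a coboundary. By the construction of $\nu$ we have $\nu(c^{-1})=g\sigma(c^{-1})g^{-1}=c^{-1}\bigl(m\sigma(c)^{-1}m^{-1}\bigr)c\in C$. Rearranging gives $m\sigma(c)^{-1}m^{-1}=c\,\nu(c^{-1})\,c^{-1}$, and here the hypothesis that $C$ is abelian is essential: since $c$ and $\nu(c^{-1})$ both lie in the abelian group $C$, this conjugation is trivial, whence $m\sigma(c)^{-1}m^{-1}=\nu(c^{-1})$. Substituting into the formula for $d$ yields $d=c^{-1}\nu(c^{-1})=c'\cdot\nu(c')$ with $c'=c^{-1}$, which exhibits $d\in\Bd^2\CC$ and hence $\Cl(Y)=[d]=1$.

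I expect no genuine obstacle beyond careful bookkeeping with $\sigma$ — in particular the verification $\sigma(m)=m^{-1}$ and the single point where commutativity of $C$ is invoked to collapse the conjugation $c\,\nu(c^{-1})\,c^{-1}=\nu(c^{-1})$. It is worth noting that this last step is exactly where the standing assumption that $C$ is abelian (under which $\Ho^2\CC$ was even defined) enters, so the argument is not expected to survive verbatim in the nonabelian setting.
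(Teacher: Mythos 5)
Your proof is correct and follows essentially the same route as the paper: both extract the same element $c\in C$ from the $\mu$-fixed point (the paper writes the fixed point as $u^{-1}\cdot e$, so your $g_0$ is its $u^{-1}$ and your $c=g_0^{-1}\sigma(g_0)g^{-1}$ is its $c=u\hs\sigma(u)^{-1}g^{-1}$) and then verify by direct computation that $d=c^{-1}\nu(c^{-1})$ is a coboundary. The only cosmetic difference is that the paper telescopes the product $c\cdot\nu(c)\cdot d=1$ in one line without ever needing to commute anything, whereas you solve for $g=c^{-1}m$ and invoke the commutativity of $C$ to collapse the conjugation $c\,\nu(c^{-1})\,c^{-1}$ — a step the paper's arrangement avoids, though of course abelianness is already baked into the definition of $\Ho^2\hs\CC$.
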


\begin{proof}
Assume that $Y$ has a $\mu$-fixed point $y$, that is, $y=\mu(y)$.
Write $y=u^{-1}\cdot e$ with $u\in G$. Then $\mu(u^{-1}\cdot e)=u^{-1}\cdot e$,
whence
\[u\cdot \sigma(u)^{-1}\cdot g^{-1}\cdot e=e.\]
Set $c=u\hs\hs \sigma(u)^{-1} g^{-1}$; then $c\cdot e=e$, whence $c\in C$.
We calculate:
\[ c\cdot\nu(c)\cdot d=
u\hs\hs \sigma(u)^{-1} g^{-1}\cdot g\cdot\sigma(u)\hs\hs u^{-1} \sigma(g)^{-1}\cdot g^{-1}\cdot g\hs\hs\sigma(g)=1.\]
Thus $\Cl(Y)=[d]=1\in \Ho^2\hs\CC$, as required.
\end{proof}

\begin{proposition}\label{p:mu-fixed}
If  $\Cl(Y)=1$ and $\Ho^1\hs\GG=1$, then $Y$ has a $\mu$-fixed point.
\end{proposition}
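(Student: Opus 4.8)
The plan is to reverse the computation in the proof of Proposition~\ref{p:no-mu-fixed}: there, a $\mu$-fixed point was used to trivialize $[d]$, and here the vanishing of $[d]$ together with $\Ho^1\hs\GG=1$ should conversely produce a $\mu$-fixed point. The guiding observation is that the element $g$ (characterized by $e=g\cdot\mu(e)$, i.e.\ $\mu(e)=g^{-1}\cdot e$) is the whole obstruction to the existence of a real point, and that it may be altered freely on the left by the stabilizer $C$ without disturbing this relation: for any $c_1\in C$ the element $g'=c_1 g$ still satisfies $g'^{-1}\cdot e=g^{-1}\cdot e=\mu(e)$, since $c_1$ fixes $e$. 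So I would look for a suitable $c_1\in C$ making $g'=c_1 g$ into an honest cocycle in $G$, and then trivialize it using $\Ho^1\hs\GG=1$.

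First I would compute the cocycle defect of the modified element. Inserting $g^{-1}g$ and using the definitions $\nu(c_1)=g\hs\sigma(c_1)\hs g^{-1}$ and $d=g\hs\sigma(g)$, one obtains
\[(c_1 g)\cdot\sigma(c_1 g)=c_1\cdot\big(g\hs\sigma(c_1)\hs g^{-1}\big)\cdot\big(g\hs\sigma(g)\big)=c_1\cdot\nu(c_1)\cdot d.\]
Hence $g'=c_1 g$ lies in $\Zl^1 G$ (that is, $g'\cdot\sigma(g')=1$) exactly when $c_1\hs\nu(c_1)\hs d=1$. Now I would invoke the hypothesis $\Cl(Y)=[d]=1$, which by definition means $d\in\Bd^2\hs\CC$, i.e.\ $d=c'\cdot\nu(c')$ for some $c'\in C$. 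Setting $c_1=c'^{-1}$ and using that $C$ is abelian (so $\nu$ is a group automorphism and all factors commute), I get $c_1\hs\nu(c_1)\hs d=c'^{-1}\nu(c')^{-1}\hs c'\hs\nu(c')=1$. Thus $g'=c_1 g\in\Zl^1 G$ is a genuine $1$-cocycle.

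Finally I would apply $\Ho^1\hs\GG=1$: this forces $[g']=[1]$, so $g'=a^{-1}\cdot\sigma(a)$ for some $a\in G$, i.e.\ $\sigma(a)=a\hs g'$. I claim $y=a\cdot e$ is the desired $\mu$-fixed point. Since $g'=c_1 g$ still satisfies $\mu(e)=g'^{-1}\cdot e$, I compute
\[\mu(a\cdot e)=\sigma(a)\cdot\mu(e)=\sigma(a)\cdot g'^{-1}\cdot e=a\hs g'\hs g'^{-1}\cdot e=a\cdot e,\]
so $y=a\cdot e\in Y^\mu=\YY(\R)$, as required.

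The only genuine content, and the step most deserving of care, is the bookkeeping with the two distinct involutions $\sigma$ on $G$ and $\nu$ on $C$, which are intertwined by conjugation by $g$; once the identity $(c_1 g)\sigma(c_1 g)=c_1\nu(c_1)d$ is in hand, everything else is formal. I expect the abelianness of $C$ to be used in exactly one essential place—to cancel $c'^{-1}\nu(c')^{-1}$ against $c'\nu(c')$—which is precisely why the $\Bd^2$-coboundary kills the defect; this mirrors the use of commutativity already made in Subsection~\ref{sec:findreal} to check $\nu^2=\id$.
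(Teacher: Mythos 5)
Your proposal is correct and follows essentially the same route as the paper's proof: modify $g$ on the left by an element of $C$ chosen via the coboundary relation $d=c'\cdot\nu(c')$ so that $g'=c_1g$ becomes a $1$-cocycle (the identity $(c_1g)\cdot\sigma(c_1g)=c_1\cdot\nu(c_1)\cdot d$ is exactly the computation in the paper), then trivialize $g'$ using $\Ho^1\hs\GG=1$ and act on $e$ to produce the $\mu$-fixed point. The only differences are notational (the paper writes the trivializing element as $u$ with $u^{-1}g'\hs\sigma(u)=1$ and sets $y=u^{-1}\cdot e$, which matches your $a=u^{-1}$).
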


\begin{proof}
Choose $e\in Y$ and choose $g\in G$ such that $e=g\cdot\mu(e)$.
Write
\[d=g\cdot\sigma(g)\in \Zl^2\CC=C^\nu.\]
By assumption $[d]=1$, that is,
\[c\cdot\nu(c)\cdot d =1\quad\text{for some } c\in C.\]
Set $g'=cg$.
We have
\[ 1=c\cdot\nu(c)\cdot d=c\cdot g\hs\sigma(c)\hs g^{-1}\cdot g\hs\sigma(g)
=cg\cdot\sigma(cg)=g'\cdot\sigma(g').\]
Thus $g'$ is a 1-cocycle, $g'\in \Zl^1\GG$.

By assumption $\Ho^1\GG=1$.
It follows that there exists
$u\in G$ such that
\[u^{-1} g'\hs\sigma(u)=1.\]
Set $y=u^{-1}\cdot e\in Y$.
Then
\begin{align*}
\mu(y)&=\sigma(u^{-1})\cdot\mu(e)=\sigma(u)^{-1}g^{-1}\cdot e=\sigma(u)^{-1}g^{-1}\cdot c^{-1}\cdot e\\
          &=\sigma(u)^{-1}(g')^{-1}\cdot e=\sigma(u)^{-1 }(g')^{-1}u\cdot y=\big(u^{-1}g'\hs\sigma(u)\big)^{-1}\cdot y=y.
\end{align*}
Thus $y$ is a $\mu$-fixed point in $Y$, as required.
\end{proof}

\begin{remark}
Propositions \ref{p:mu-fixed} and \ref{p:no-mu-fixed} and their proofs
give a method of finding a real (that is, $\mu$-fixed) point in $Y$
or proving that $Y$ has no real points, assuming that $\Ho^1\GG=1$ and that $C$ is abelian.
The general case (without these two assumptions) is treated in \cite{Borovoi2021}.
\end{remark}


\section{A $\Z_3$-grading of the simple complex Lie algebra of type $\EEE_8$}\label{sec:gradedLie}

\subsection{Constructing $\EEE_8$ with a $\Z_3$-grading}
We write $\Z_3 = \Z/3\Z$.  We have
\[\Z_3=\{\, \bar 0 =0+3\Z,\ \bar 1=1+3\Z,\ \ov{-1}=-1+3\Z\,\}.\]
Generalizing and simplifying a construction of Vinberg and Elashvili \cite{VE1978},
here we construct a $\Z_3$-grading of a split simple Lie algebra $\g$
of type $\EEE_8$ over a field $\K$ of characteristic 0.
This grading plays a pivotal role in our
classification of trivectors of a real 9-dimensional space.
We follow \cite[Example 3.3.v]{Le2011}.

Consider the free abelian group $\Z^9$ with the standard basis $\vet_1,\dots,\vet_9$.
Set
\[Q=\Z^9/\langle\vet_1+\dots+\vet_9\rangle.\]
For $i=1,\dots,9$, let $\ve_i\in Q$ denote the image of $\vet_i$ in $Q$. Then
\[ \ve_1+\dots+\ve_9=0.\]

We construct a subset $\Phi_\vk\subset Q$ for each $\vk\in\Z_3$.
We write $\Phi_0$ for $\Phi_{\bar0}$ etc.
Set
\begin{align*}
&\Phi_0=\{\ve_i-\ve_j\mid i\neq j\},\\
&\Phi_1=\{\ve_i+\ve_j+\ve_k\mid i,j,k\  \text{pairwise different}\},\\
&\Phi_{-1}=-\Phi_1=\{-(\ve_i+\ve_j+\ve_k)\mid i,j,k\  \text{pairwise different}\}\}.
\end{align*}
We set
\begin{gather*}
\Phi=\Phi_0\cup\Phi_1\cup\Phi_{-1}\hs,\\
\alpha_1=\ve_1-\ve_2,\ \alpha_2=\ve_2-\ve_3,\ \dots,\
      \alpha_7=\ve_7-\ve_8,\ \alpha_8=\ve_6+\ve_7+\ve_8,\\
\Pi=\{\alpha_1,\dots,\alpha_7,\alpha_8\}\subset \Phi,\\
 V=Q\otimes_\Z\R.
\end{gather*}
Then $\Phi$ is a root system of type $\EEE_8$ in $V$, and $\Pi$ is a basis of $\Phi$;
see  Onishchik and Vinberg \cite[Table 1]{OV1990}.
Note that $\Phi=\Phi_0\cup\Phi_1\cup\Phi_{-1}$ is a
{\em $\Z_3$-grading of $\Phi$} in the following sense:
\begin{equation}\label{e:Phi-grading}
(\Phi_\iota+\Phi_\vk)\cap\Phi\,\subset\, \Phi_{\iota+\vk}\quad
     \text{for}\ \iota,\vk\in\Z_3,\ \iota\neq \vk.
\end{equation}

We denote by $V^*$ the dual vector space to $V$.
Consider the dual root system
\[\Phi^\vee=\{\eta^\vee\in V^*\mid\eta\in \Phi\},\]
where $\eta^\vee$ is the coroot corresponding to the root $\eta$.
Set $Q^\vee=\langle\Phi^\vee\rangle\subset V^*$, the lattice generated by $\Phi^\vee$.
Then $\{\alpha^\vee\mid\alpha\in \Pi\}$ is a basis of $Q^\vee$.

Let $\K$ be a field of characteristic 0. Consider the pair $(\g,\h)$
defined by $(\Phi,\Pi)$; see Bourbaki  \cite[Section VIII.4.3, Theorem 1]{Bourbaki79}.
Here $(\g,\h)$ is a split simple Lie algebra of type $\EEE_8$ over $\K$
(namely, $\g$ is a simple Lie algebra and  $\h\subset\g$ is a split Cartan subalgebra).
They come with a generating family
\[(x_{-\alpha}, h_\alpha, x_\alpha)_{\alpha\in\Pi}, \ \ \text{where}\ \
             h_\alpha\in \h\subset \g, \ \  x_\alpha\hs,\hs x_{-\alpha}\in \g.\]
These elements $x_{-\alpha}, h_\alpha, x_\alpha$ satisfy
conditions (16 -- 22) of  Bourbaki \cite[Section VIII.4.3]{Bourbaki79},
in particular,
\[  [h_\alpha,h_\beta]=0,\quad [h_\alpha, x_\beta]=n(\beta,\alpha)\hs x_\beta, \quad
    [h_\alpha, x_{-\beta}]=-n(\beta,\alpha)\hs x_{-\beta},\quad
    [x_\alpha,x_{-\alpha}]=-h_\alpha  \]
for $\alpha,\beta\in\Pi$, where $n(\beta,\alpha)=\langle\beta,\alpha^\vee\rangle$.
The family $(h_\alpha)_{\alpha\in\Pi}$ is a basis of $\h$.
Let $\h^*$ denote the dual vector $\K$-space to $\h$.
The homomorphism $\eta\mapsto\eta_\PPi\colon Q\to\h^*$
such that $\langle\eta_\PPi,h_\alpha\rangle=\langle\eta,\alpha^\vee\rangle$,
takes $\Phi$ to the root system of $(\g,\h)$.
We have a root decomposition
\[\g=\h\oplus\bigoplus_{\eta\in\Phi}\g_\eta\hs,\]
where in $\g_\eta$ we write $\eta$ instead of $\eta_\PPi$,
and where $g_\alpha=\K\hs x_\alpha$ for $\alpha\in\Pi\subset \Phi$.

Set
\[\g_0=\h\oplus\bigoplus_{\eta\in\Phi_0}\g_\eta,\quad
   \g_1=\bigoplus_{\eta\in\Phi_1}\g_\eta,\quad
   \g_{-1}=\!\bigoplus_{\eta\in\Phi_{-1}}\g_\eta.\]
It follows from \eqref{e:Phi-grading} that
\[ [g_\iota\hs,\hs\g_\vk]\subset\g_{\iota+\vk}\quad\text{for}\ \iota,\vk\in\Z_3.\]
In other words,
\[\g=\g_0\oplus\g_1\oplus\g_{-1}\]
is a $\Z_3$-grading of $\g$.
In particular, $\g_0$ is a Lie subalgebra of $\g$,
and the subspaces $\g_1$ and $\g_{-1}$ are naturally $\g_0$-modules.

Set
\[    \alpha_8^\AAA=\ve_8-\ve_9,\quad
      \alpha_i^\AAA=\alpha_i\ \text{for}\ i=1,\dots,7,\quad
      \Pi_0=\{\alpha_1^\AAA,\dots,\alpha_7^\AAA,\alpha_8^\AAA\}.\]
Clearly, $\Phi_0$ is a root system of type $\AAA_8$, and $\Pi_0$ is a basis of $\Phi_0$.
Therefore, $\g_0\simeq\ssl(9,\K)$.
For the simple root $\alpha^\AAA_8$ of $(\g_0,\h)$,
we choose a nonzero element $x_{\alpha_8^\AAA}\in \g_{\alpha_8^\AAA}$.
We define $h_8^\AAA\in\h$ by the formula
\[ [h_8^\AAA, x_\beta]=n(\beta,\alpha_8^\AAA)\hs x_\beta\ \ \text{for}\ \beta\in\Pi_0,\]
and we define $x_{-\alpha_8^\AAA}\in \g_{-\alpha_8^\AAA}$ by the formula
\[ [x_{\alpha_8^\AAA},x_{-\alpha_8^\AAA}] = -h_8^\AAA.\]
We set
\[h_i^\AAA=h_{\alpha_i}\ \ \text{for}\ i=1,\dots,7.\]
We specify an isomorphism $\g_0\isoto \ssl(9,\K)$ by sending
\[h_i^\AAA\mapsto E_{i,i}-E_{i+1,i+1},\quad x_{\alpha_i^\AAA}
      \mapsto E_{i,i+1},\quad x_{-\alpha_i^\AAA}\mapsto -E_{i+1,i}
      \ \ \text{for}\ i=1, \dots, 8,\]
where $E_{i,j}\in\gl(9,\K)$ is the matrix with the $(i,j)$-entry $1$ and all other entries $0$.
This isomorphism defines a structure of $\ssl(9,\K)$-module on $\g_1$.
From the formula for $\Phi_1$ we see that $\g_1$ is an irreducible $\ssl(9,\K)$-module
with highest weight $\ve_1+\ve_2+\ve_3$.
Therefore, it is isomorphic to $\bigwedge^3 \K^9$.
We specify an isomorphism
\begin{equation}\label{e:bigwedge2}
\g_1\isoto\bigwedge^3 \K^9
\end{equation}
by sending $x_{\alpha_8}$ to $e_{678}\ce e_6\wedge e_7\wedge e_8$
(recall that $\alpha_8=\ve_6+\ve_7+\ve_8$).

Set $G=\Aut(\g)$. It is a split simple algebraic $\K$-group of type $\EEE_8$,
simply connected and of adjoint type, with Lie algebra $\g$.
The Lie algebra $\ssl(9,\K)$ is the Lie algebra
of the simply connected simple algebraic $\K$-group $\SL_{9,\K}$.
Since $\SL_{9,\K}$ is connected and  simply connected,
the homomorphism of Lie algebras
\[\psi\colon \ssl(9,\K)\isoto \g_0\into \g \]
is the differential of a uniquely defined  homomorphism of algebraic $\K$-groups
\begin{equation}\label{e:SL(9)-G}
\Psi\colon\SL_{9,\K}\to G.
\end{equation}
Let $G_0\subset G$ denote the image of this homomorphism,
which is an algebraic $\K$-subgroup of $G$ with Lie algebra $\g_0$.
From the formulas for $\Phi_0$, $\Phi_1$, $\Phi_{-1}$, and $\Phi$ we see
that the kernel of the homomorphism \eqref{e:SL(9)-G} is
\[\mu_3=\{\diag(z,\dots,z)\mid z\in \ov \K,\ z^3=1\},\]
where $\ov \K$ denotes an algebraic closure of $\K$.
which is a central algebraic $\K$-subgroup of $\SL_{9,\K}$.
Thus we may and shall identify the algebraic $\K$-groups  $G_0$ and $\SL_{9,\K}/\mu_3$.

Since the algebraic $\K$-subgroup $G_0\subset G=\Aut(\g)$ is connected,
and its Lie algebra $\g_0$ preserves $\g_1$,
we see that $G_0$ itself preserves $\g_1$.
Thus we can regard $\g_1$ as a $G_0$-module and as an $\SL_{9,\K}$-module.
Our isomorphism \eqref{e:bigwedge2} $\g_1\isoto\bigwedge^3 \K^9$  is $\g_0$-equivariant,
and hence $G_0$-equivariant, because $G_0$ is connected.
Thus the isomorphism \eqref{e:bigwedge2}
is $\SL_{9,\K}$-equivariant.

The homomorphism  \eqref{e:SL(9)-G} induces homomorphism on $\K$-points
\begin{equation*}
\Psi\colon \SL(9,\K)\to G_0(\K).
\end{equation*}
If $\K=\C,\R$, then this homomorphism is bijective (for $\K=\R$  by \cite[Corollary 3.3.14]{BGL2021}).

An element $x\in \g_1$ is  called {\it semisimple}
(respectively {\it nilpotent}) if the linear operator $\mathrm{ad\,} x \colon \g\to\g$
is  semisimple  (respectively nilpotent)  in $\g$.
By  \cite[\S 2]{Le2011} the \emph{homogeneous} Jordan decomposition holds:
any $x\in \g_1$ has a unique decomposition $x= x_s + x_n$  with  $x_s, x_n \in \g _1$,
where  $x_s$ is semisimple, $x_n$ is nilpotent, and $[x_s, x_n] = 0$.

It follows that the elements of $\g_1$ 
are naturally divided into three classes consisting,
respectively, of the nilpotent elements, the
semisimple elements, and the elements that are neither semisimple nor
nilpotent. We say that the elements of the third class are of {\em mixed}
type. Thus each of the problems of classification of the $\SL(9,\C)$-orbits in $\g_1^\cC$ and
the $\SL(9,\R)$-orbits in $\g_1=\g_1^\rR$  naturally splits into three subproblems.

\subsection{Nilpotent elements and homogeneous $\ssl_2$-triples}\label{sec:sl2t}

A triple $(h,e, f)$ of elements  in $\g$ is called an
{\em $\ssl_2$-triple} if
$$ [e,f]=h,\:  [h,e]=2e,\: [h,f]=-2f.$$
We say that an $\ssl_2$-triple $(h,e,f)$ is {\em homogeneous} if
$h\in \g_0$, $e\in \g_1$, $f\in \g_{-1}$.

The element $h$ is called the {\em characteristic} of the triple $(h,e,f)$.

\begin{proposition}[Jacobson-Morozov-Vinberg theorem  for a $\Z_m$-graded semisimple Lie algebra]
\label{prop:JMVL}
We write $\g^\rR$ for $\g$.
For $\K=\C,\R$, let $e \in \g _1^\kK$ be  a  nonzero  nilpotent element.
\begin{enumerate}[{\normalfont (i)}]	
\item There is a semisimple element $h \in \g _0^\kK$ and a nilpotent element  $f \in \g_{-1}^\kK$   such that
$$ [h, e] = 2e, \, [h, f] = -2f, \, [e,f] = h.$$
\item The element $h$ is defined  uniquely up to  conjugacy via an element in the centralizer $\Zz _{ G _0(\K)} (e)$   of $e$ in $G_0 (\K)$.\\
\item  Given $e$ and $h$, the element $f$ is defined uniquely.
\end{enumerate}
\end{proposition}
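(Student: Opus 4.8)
The plan is to deduce this graded Jacobson--Morozov--Vinberg statement from the classical Jacobson--Morozov theorem together with an averaging argument that enforces homogeneity, following the approach of Vinberg. First I would invoke the ordinary Jacobson--Morozov theorem for the semisimple Lie algebra $\g^\kK$: since $e\in\g_1^\kK$ is nilpotent as an element of $\g^\kK$ (because $\ad e$ is nilpotent on $\g$), there exist $h',f'\in\g^\kK$ forming an $\ssl_2$-triple $(h',e,f')$. The issue is that $h'$ and $f'$ need not be homogeneous. To fix this I would use the grading element: let $\zeta\in\Z_3$ act on $\g^\kK$ so that $\g_\vk^\kK$ is the $\vk$-eigenspace; more precisely, pass to the associated $\Z$-filtration or work with the derivation $D$ defined by $D|_{\g_\vk}=\vk$. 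The relation $[e,e]=0$ and $e\in\g_1$ together with $[h',e]=2e$ can be exploited to replace $(h',e,f')$ by a homogeneous triple.

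The cleanest route for the existence part (i) is the standard result that in a $\Z_m$-graded semisimple Lie algebra, any nilpotent $e\in\g_1$ can be completed to a triple with $h\in\g_0$ and $f\in\g_{-1}$; this is precisely the content of the references already available (the homogeneous Jordan decomposition of \cite{Le2011} and Vinberg's theory \cite{Vinberg1979}). Concretely, I would argue as follows. Using the $\ssl_2$-theory, the centralizer $\z_{\g^\kK}(e)$ inherits the grading, and a dimension/eigenvalue count shows that the semisimple element $h$ can be chosen in $\g_0^\kK$: one applies the torus-conjugacy within the reductive group $\Zz_{G_0(\K)}(e)$ to move $h'$ into $\g_0$. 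Once $h\in\g_0^\kK$ is fixed with $[h,e]=2e$, homogeneity of $f$ follows by decomposing any candidate $f=\sum_\vk f_\vk$ into graded pieces: the relations $[h,f]=-2f$ and $[e,f]=h$ force, after projecting onto graded components and using $[e,-]\colon\g_{-1}\to\g_0$ and $h\in\g_0$, that only the $\g_{-1}$-component survives.

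For part (ii), the uniqueness of $h$ up to $\Zz_{G_0(\K)}(e)$-conjugacy, I would appeal to the standard Kostant-type rigidity: any two $\ssl_2$-triples through the same nilpotent $e$ have their semisimple elements conjugate by an element of the centralizer of $e$ in the relevant group, and here the graded version confines the conjugating element to $G_0(\K)$ because both $h$'s lie in $\g_0$. For part (iii), uniqueness of $f$ given $e$ and $h$: if $(h,e,f)$ and $(h,e,f')$ are both triples, then $f-f'\in\z_{\g^\kK}(e)$ and $[h,f-f']=-2(f-f')$, so $f-f'$ lies in the $(-2)$-eigenspace of $\ad h$ inside $\z_{\g^\kK}(e)$; by $\ssl_2$-representation theory this eigenspace is zero (the centralizer of $e$ meets only the nonnegative $\ad h$-weight spaces), giving $f=f'$.

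The main obstacle I anticipate is part (ii): controlling the conjugacy of $h$ \emph{within the graded centralizer} $\Zz_{G_0(\K)}(e)$ rather than in the full group $\Zz_{G(\K)}(e)$, and doing so uniformly for both $\K=\C$ and $\K=\R$. Over $\C$ this is standard, but over $\R$ one must ensure the conjugating element can be taken to be real, which is exactly where the real-form and Galois-cohomology machinery of the earlier sections may be needed; alternatively, one reduces the real statement to the complex one plus a rationality argument, since the triple $(h,e,f)$ and the action are all defined over $\R$. I would flag that the existence of a \emph{real} homogeneous completion for a real nilpotent $e$ is the delicate point and cite the homogeneous Jordan decomposition of \cite{Le2011} together with the bijectivity of $\Psi\colon\SL(9,\K)\to G_0(\K)$ to transfer the statement between $\g_0$ and $\ssl(9,\K)$.
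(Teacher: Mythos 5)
The paper does not prove this proposition at all: its ``proof'' is a bare citation to Vinberg \cite[Theorem 1]{Vinberg1979} for $\K=\C$, to \cite[Theorem 2.1]{Le2011} for $\K=\R$, and to \cite[Lemma 8.3.5]{Graaf2017}. Since you ultimately fall back on exactly these sources for the existence statement, your proposal is acceptable on the paper's own terms, and your arguments for part (iii) and for the homogeneity of $f$ (decompose $f=\sum_\vk f_\vk$, note each component is an $\ad h$-eigenvector of eigenvalue $-2$, and use that $\z_{\g^\kK}(e)$ meets only the nonnegative $\ad h$-weight spaces) are correct and are essentially what the cited proofs do.

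Two points in your sketch deserve correction. First, the mechanism you propose for placing $h$ in $\g_0^\kK$ --- ``torus-conjugacy within the reductive group $\Zz_{G_0(\K)}(e)$'' --- does not work as stated: the stabilizer of a nonzero nilpotent element is never reductive, and conjugation by $\Zz_{G_0(\K)}(e)$ preserves the grading, so it cannot turn a non-homogeneous $h'$ into a homogeneous one. The standard device (Vinberg's, and the one in \cite[Lemma 8.3.5]{Graaf2017}) is instead to take the degree-zero component $h\ce h'_0$ of $h'$: comparing degree-one components of $[h',e]=2e$ gives $[h,e]=2e$, and writing $h'=[e,f']$ shows $h=[e,f'_{-1}]\in[e,\g_{-1}^\kK]$, so Morozov's completion lemma (applied degree by degree) produces $f\in\g_{-1}^\kK$. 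Second, the obstacle you anticipate in part (ii) --- that over $\R$ the conjugating element might not be real, requiring Galois cohomology --- is not actually where any difficulty lies: in the Kostant-type rigidity argument the element conjugating one characteristic to another is the exponential of a nilpotent element of $\z_{\g_0^\kK}(e)$ obtained by solving linear equations over $\K$, so it is automatically defined over $\R$ when the data are. No descent machinery is needed for this proposition; it enters the paper only later, when counting real orbits inside a complex orbit.
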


\begin{proof}
For $\K=\C$ see Vinberg \cite[Theorem 1]{Vinberg1979}. For $\K=\R$ see \cite[Theorem 2.1]{Le2011}.
See also \cite[Lemma 8.3.5]{Graaf2017}.
\end{proof}

For $\K=\C,\R$, let $\sT^\kK$ be the set of homogeneous $\ssl_2$-triples in $\g^\kK$.
We also write $\sT$ for $\sT^\rR$.
The group $\SL(9,\K)$ acts on $\sT^\kK$.

\begin{corollary}\label{cor:sl2conj}
For $\K=\C,\R$, let $(h,e,f)$ and  $(h',e',f')$ be
two homogeneous $\ssl_2$-triples in $\g^\kK$.
Then $e,e'$ are $\SL(9,\K)$-conjugate if and only if there exists $g\in
\SL(9,\K)$ with $(g\cdot h,\hs g\cdot e,\hs g\cdot f) = (h',e',f')$.
\end{corollary}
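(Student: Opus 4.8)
The plan is to read this off from the Jacobson--Morozov--Vinberg theorem (Proposition \ref{prop:JMVL}), exploiting both of its uniqueness clauses. The ``only if'' direction will be immediate: if some $g\in\SL(9,\K)$ satisfies $(g\cdot h,\,g\cdot e,\,g\cdot f)=(h',e',f')$, then comparing the middle entries gives $g\cdot e=e'$, so $e$ and $e'$ are $\SL(9,\K)$-conjugate.

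For the ``if'' direction I would first record the equivariance I need. The group $\SL(9,\K)$ acts on $\g^\kK$ through $\Psi\colon\SL(9,\K)\to G_0(\K)$ followed by the adjoint action of $G_0\subset G=\Aut(\g)$; hence the action preserves the Lie bracket, and, since $G_0$ is connected with Lie algebra $\g_0$, which preserves the $\Z_3$-grading, it also preserves the grading. Therefore, for every $g\in\SL(9,\K)$, the image $(g\cdot h,\,g\cdot e,\,g\cdot f)$ of a homogeneous $\ssl_2$-triple is again a homogeneous $\ssl_2$-triple.

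Assuming $e$ and $e'$ are $\SL(9,\K)$-conjugate, I would pick $g_1\in\SL(9,\K)$ with $g_1\cdot e=e'$. Then $(g_1\cdot h,\,e',\,g_1\cdot f)$ is a homogeneous $\ssl_2$-triple, so $g_1\cdot h$ and $h'$ are two characteristics for the same nilpotent element $e'$. Proposition \ref{prop:JMVL}(ii) then supplies $g_2\in\Zz_{\SL(9,\K)}(e')$ with $g_2\cdot(g_1\cdot h)=h'$; here I invoke the bijectivity of $\Psi$ on $\K$-points for $\K=\C,\R$ to lift the centralizing element from $G_0(\K)$ back to $\SL(9,\K)$. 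Setting $g=g_2g_1$ gives $g\cdot e=e'$ and $g\cdot h=h'$, so $(g\cdot h,\,g\cdot e,\,g\cdot f)=(h',\,e',\,g\cdot f)$ is a homogeneous $\ssl_2$-triple whose first two entries agree with those of $(h',e',f')$. The uniqueness of $f$ given $e$ and $h$, Proposition \ref{prop:JMVL}(iii), then forces $g\cdot f=f'$, yielding the required $g$.

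I expect no serious obstacle: the content is entirely carried by parts (ii) and (iii) of Proposition \ref{prop:JMVL}, and the argument is bookkeeping around them. The one point I would be careful about is the lifting of $g_2$ through $\Psi$ --- ensuring the element that conjugates the characteristics while fixing $e'$ can be chosen inside $\SL(9,\K)$ and not merely inside $G_0(\K)$ --- which is exactly what the bijectivity of $\Psi$ on real and complex points guarantees.
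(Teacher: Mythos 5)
Your argument is correct and follows essentially the same route as the paper: both reduce the nontrivial direction to Proposition \ref{prop:JMVL}(ii) and (iii) and use the bijectivity of $\Psi\colon\SL(9,\K)\to G_0(\K)$ to move the conjugating element between the two groups. The paper simply packages the conclusion as a single $g_0\in G_0(\K)$ and lifts once at the end, whereas you lift the centralizing element $g_2$ separately; the difference is only bookkeeping.
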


\begin{proof}
Suppose that there is $g_1\in \SL(9,\K)$ such that $g_1\cdot e = e'$.
Then the image of $g_1$ in $G_0(\K)$ also maps $e$ to $e'$.
From Proposition \ref{prop:JMVL}(ii) and (iii), it now follows that there exists
$g_0\in G_0(\K)$ with $g_0 \cdot h=h'$, $g_0 \cdot e=e'$,
$g_0 \cdot f = f'$. 
The homomorphism
$\Psi \colon \SL(9,\K)\to G_0(\K)$ is bijective (see above).
So the preimage $g$ of $g_0$ in $\SL(9,\K)$ does the job.
\end{proof}

We note the following important fact: each $\SL(9,\C)$-orbit in $\sT^\cC$ has
a real representative. This follows immediately from the classification of
these orbits in \cite{VE1978}, where a real representative is given for each
orbit. It can also be proved more conceptually and more generally, see
\cite[Proposition 4.3.22]{BGL2021}.

\begin{theorem}\label{thm:galois1}
Let $e \in \g_1$ be a nilpotent element and $t=(h, e, f)$ be a homogeneous
$\ssl_2$-triple containing $e$.
Write
$$\Zm_{\SL(9,\C)}(t) = \{ g\in \SL(9,\C) \,\mid\, g\cdot f=f,\,  g\cdot h=h,\,
g\cdot e=e\}.$$
The $\SL(9,\R)$-orbits in $(\SL(9,\C)\cdot e) \,\cap\, \g_1$ correspond
bijectively to the elements of $\Ho^1 \Zm_{\SL(9,\C)}(t)$.
\end{theorem}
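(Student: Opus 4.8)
The plan is to reduce the theorem to the general machinery of Proposition~\ref{p:coh-orbits} by choosing the right homogeneous space. The natural candidate is the variety $\sT^\cC$ of homogeneous $\ssl_2$-triples, on which $\SL(9,\C)$ acts, viewed as a real algebraic variety $\YY$ via the real structure coming from complex conjugation on $\g^\cC$. First I would verify that the $\SL(9,\C)$-orbit $\Oo_t$ of $t=(h,e,f)$ is a homogeneous space carrying a real structure $\mu$, with $\YY(\R)=\Oo_t^\mu$ the set of real homogeneous triples in that orbit, and that the given triple $t$ is a real point, i.e.\ $\mu$-fixed. Since we are told (just above the theorem) that every $\SL(9,\C)$-orbit in $\sT^\cC$ has a real representative, such a real point exists; we may take $t$ itself to be real. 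The stabilizer of the real point $t$ is $\CC=\Zm_{\SL(9,\C)}(t)$ with its induced real structure, so $\CC(\R)=\Zm_{\SL(9,\R)}(t)$.

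\textbf{Applying the cohomological machinery.}
Then Proposition~\ref{p:coh-orbits}, applied to $\GG=\SL(9,\R)=(\SL(9,\C),\sigma)$ acting transitively on $\YY=\Oo_t$ with stabilizer $\CC=\Zm_{\SL(9,\C)}(t)$, yields a canonical bijection
\[
\ker\big[\hs\Ho^1\hs\CC\to\Ho^1\hs\GG\hs\big]\isoto\YY(\R)/\GG(\R),
\]
that is, between $\ker[\Ho^1\Zm_{\SL(9,\C)}(t)\to\Ho^1\SL(9,\R)]$ and the set of $\SL(9,\R)$-orbits of real homogeneous triples lying in $\Oo_t$. The next step is to invoke the vanishing $\Ho^1\SL(9,\R)=1$: since $\SL_{9}$ over $\R$ is simply connected and $\R$ has the relevant cohomological triviality for simply connected groups (Hasse principle / the vanishing of $\Ho^1$ for $\SL_n$ over $\R$), the kernel above equals all of $\Ho^1\Zm_{\SL(9,\C)}(t)$. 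Hence $\Ho^1\Zm_{\SL(9,\C)}(t)$ is in canonical bijection with the set of $\SL(9,\R)$-orbits of real homogeneous triples in $\Oo_t$.

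\textbf{Translating triples back to elements.}
Finally I would transport the statement from triples to the nilpotent element $e$. By Corollary~\ref{cor:sl2conj}, two real homogeneous triples are $\SL(9,\R)$-conjugate if and only if their $e$-components are; and Proposition~\ref{prop:JMVL}(ii),(iii) shows every real nilpotent $e'$ in the complex orbit $\SL(9,\C)\cdot e$ extends to a real homogeneous triple, unique up to $\SL(9,\R)$-conjugacy. Thus the map sending a triple to its middle nilpotent component induces a bijection between $\SL(9,\R)$-orbits of real triples in $\Oo_t$ and $\SL(9,\R)$-orbits in $(\SL(9,\C)\cdot e)\cap\g_1$. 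Composing with the bijection from the previous paragraph gives the asserted bijection with $\Ho^1\Zm_{\SL(9,\C)}(t)$.

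\textbf{The main obstacle.}
The routine part is the formal application of Proposition~\ref{p:coh-orbits}; the subtle points are two. First, one must check that $\sT^\cC$ (or at least the orbit $\Oo_t$) is genuinely quasi-projective so that the Galois-descent framework of the earlier section applies---this follows because $\Oo_t$ is a homogeneous space of a linear algebraic group, hence quasi-projective. Second, and this is where I expect the real care to be needed, one must ensure that the bijection between $\SL(9,\R)$-orbits of triples and of nilpotent elements is compatible with the cohomological identification, i.e.\ that passing from $\Zm_{\SL(9,\C)}(t)$ (stabilizer of the triple) rather than $\Zm_{\SL(9,\C)}(e)$ (stabilizer of $e$ alone) does not lose or duplicate orbits. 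This is exactly what Corollary~\ref{cor:sl2conj} together with the uniqueness in Proposition~\ref{prop:JMVL} is designed to guarantee, so the argument hinges on quoting those two results precisely rather than on any new computation.
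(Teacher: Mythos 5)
Your argument is correct and follows essentially the same route as the paper: reduce to orbits of homogeneous $\ssl_2$-triples via Corollary~\ref{cor:sl2conj} and Proposition~\ref{prop:JMVL}, apply Proposition~\ref{p:coh-orbits} to the orbit of $t$, and use the vanishing of $\Ho^1$ of $\SL_9$ over $\R$ to identify the kernel with all of $\Ho^1\hs\Zm_{\SL(9,\C)}(t)$. One small correction: that vanishing is not a consequence of simple connectedness (over $\R$ a simply connected group can have nontrivial $\Ho^1$, e.g.\ compact forms), but rather the classical fact that $\SL_n$ is special (Hilbert~90), which is all that is needed here.
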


\begin{proof}
By Corollary \ref{cor:sl2conj} the $\SL(9,\R)$-orbits in $(\SL(9,\C)\cdot e)\, \cap\, \g_1$
correspond bijectively to the  $\SL(9,\R)$-orbits in  $(\SL(9,\C)\cdot t)\, \cap\,\sT$.
By Proposition \ref{p:coh-orbits} the latter correspond bijectively
to $\ker \big[\hs\Ho^1\hs\Zm_{\SL(9,\C)}(t)\to\Ho^1\hs\hs\SL(9,\C)\hs\big]$.
Since $\Ho^1\hs\hs \SL(9,\C)=1$, the theorem follows.
\end{proof}

\begin{remark}\label{rem:char}
Let $(h,e,f)$ be a homogeneous $\ssl_2$-triple in $\g^\cC$ and
consider the element
$h'=(\psi^\cC)^{-1}(h) \in \ssl(9,\C)$. It is not difficult to see that
$h'$ has rational eigenvalues. So $h'$ is $\SL(9,\C)$-conjugate to a
unique real diagonal matrix $h''$ with weakly decreasing diagonal entries.
Furthermore, the differences of two eigenvalues of $h'$ are integral.
In the second column of
Table \ref{tab:orbitreps} we write the \emph{indices}  of  $h'$, i.e.
the value $(\eps_i-\eps_{i+1})(h')$, where $\eps_i -\eps_{i+1}$ are the simple
roots of $\ssl (9, \C)$ with respect to the Cartan subalgebra consisting of
the diagonal matrices.
\end{remark}

\subsection{Semisimple elements in $\g_1^\cC$}\label{subs:ss}

For $\K=\C,\R$, a {\em Cartan subspace} in $\g_1^\kK$  is, by definition,  a maximal subspace in
$\g_1^\kK$  consisting of commuting semisimple elements
(cf. \cite{Vinberg1976} and \cite[\S 2]{Le2011}).
In \cite{VE1978} a specific Cartan subspace $\Cg^\cC\subset\g_1^\cC$ is given
(it is described in Section \ref{sec:tabsemsim} above).

Define
\begin{align*}
\Nm_{\SL(9,\C)}(\Cg^\cC) &=\{g\in\SL(9,\C)\mid g\cdot \Cg^\cC=\Cg^\cC\},\\
\Zm_{\SL(9,\C)}(\Cg^\cC) &=\{g\in \SL(9,\C)\mid g\cdot p=p\text{ for all }
p\in\Cg^\cC\}.
\end{align*}
Then $W = \Nm_{\SL(9,\C)}(\Cg^\cC)/\Zm_{\SL(9,\C)}(\Cg^\cC)$ is called the Weyl
group (or also little Weyl group) of the $\Z_3$-graded Lie algebra $\g^\cC$.

In \cite{Vinberg1976} the following is shown:
\begin{itemize}
	\item Every semisimple $\SL(9,\C)$-orbit in $\g_1^\cC$ has a point in $\Cg^\cC$.
	\item Two elements of $\Cg^\cC$ are $\SL(9,\C)$-conjugate if and only if
	they are $W$-conjugate.
\end{itemize}

In \cite{VE1978} it is shown how to refine these statements. Seven
{\em canonical subsets} $\Fm_1^\cC,\ldots, \Fm_7^\cC$ of $\Cg^\cC$ along with
finite groups $\Gamma_{1},\ldots,\Gamma_{7}$ are determined such that
\begin{itemize}
	\item Every semisimple $\SL(9,\C)$-orbit has a point in precisely one of
	the $\Fm_k^\cC$.
	\item Each group $\Gamma_k$ acts on $\Fm_k^\cC$. Furthermore,
	  two elements of $\Fm_k^\cC$ are $\SL(9,\C)$-conjugate if and only if
          they are $\Gamma_{k}$-conjugate.
\end{itemize}

Here we summarize some of the constructions of \cite{VE1978} that are
used to establish this.

Let $\c^\cC$ be a Cartan subalgebra of $\g^\cC$ containing $\Cg^\cC$. It turns out
that such a Cartan subalgebra  $\c^\cC$ is unique and that
$\c^\cC = (\c^\cC\cap\g_{-1}^\cC) \oplus \Cg^\cC$.
Let $\Pi$ be the root system of $\g^\cC$ with respect to $\c^\cC$.

The $\Z_3$-grading $\g^\cC = \g^\cC_{-1} \oplus \g^\cC_0 \oplus \g^\cC_1$
yields an automorphism $\theta \colon \g^\cC \to \g^\cC$ defined by
$\theta(x) = \zeta^i x$ for $x\in \g_i^\cC$, where $\zeta\in\C$ is a fixed primitive
third root of unity.
We see that $\c^\cC$ is $\theta$-stable and we consider
the dual map on the dual space   $\c^{\cC,*}$ to $\c^\cC$:
\[ \theta^{*}\colon \c^{\cC,*}\to \c^{\cC,*},\quad
    (\theta^*\gamma) (c) = \gamma(\theta^{-1}c)\ \ \text{for}\ \gamma\in\c^{\cC,*},\, c\in \c^\cC.\]
We have $\theta^{*\hs2}+\theta^*+1=0$,
so that for $\alpha\in \Pi$ the set
$\Pi(\alpha)\ce \{ \pm\alpha,\,\pm \theta^*(\alpha),\, \pm \theta^{*\hs2}(\alpha)\}$
lies in a 2-dimensional space.
Furthermore, it is a root subsystem of type $\AAA_2$.
The sets $\Pi(\alpha)$ form a partition of $\Pi$, and hence there are 40 such sets.
Note that for  $\alpha\in \Pi\subset\c^{\cC,*}$ and $c\in \Cg^\cC\subset\c^\cC$ we have
$\theta^*(\alpha)(c) = \zeta^2 \alpha(c)$.
Hence the restrictions of the elements of $\Pi(\alpha)\subset\c^{\cC,*}$
to $\Cg^\cC\subset\c^\cC$ are scalar multiples of the restriction of $\alpha$.

By definition a {\em complex reflection} is a linear transformation $r$
of a complex vector space $V$ such that there is a basis of $V$ with respect to
which $r$ has the matrix $\diag(\omega,1,\ldots,1)$, where $\omega$ is a
primitive $m$-th root of unity for some $m\geq 2$ (see
\cite[Definition 1.7]{LT2009}, \cite[Definition 3.6]{Wallach2017}).
A complex reflection group is a subgroup of $\GL(V)$ generated by complex
reflections. Vinberg has shown that the Weyl
group $W$ is a complex reflection group in $\GL(\Cg^\cC)$
(\cite[Theorem 8]{Vinberg1976}, \cite[Theorem 3.69]{Wallach2017}).
In \cite{VE1978} a complex reflection $w_\alpha$ of order 3 is constructed
corresponding
to each $\Pi(\alpha)$. So we have 40 of those complex reflections, and together they
generate
the Weyl group $W$. (In fact, $W$ is already generated by four of those
complex reflections.) In \cite{VE1978}
shown that there exist $p_\alpha\in \Cg^\cC$ such that
\begin{equation}\label{eq:refl}
w_\alpha(h) = h -\alpha(h) p_\alpha \text{ for } h\in \Cg^\cC.
\end{equation}

Now let $p\in \Cg^\cC$. The centralizer $\z(p)$ of $p$ in $\g^\cC$ is given by
\begin{equation}\label{eq:pcen}
\z(p) = \c^\cC \oplus\! \bigoplus_{\substack{\alpha\in \Pi\\ \alpha(p)=0}} \g_\alpha^\cC\hs,
\end{equation}
where $\g_\alpha^\cC$ denotes the root subspace in $\g^\cC$
corresponding to a root $\alpha$.

Let $W_p$ be the subgroup of $W$ generated by the complex reflections $w_\alpha$ where $\alpha$
is such that $\alpha(p)=0$, or equivalently $w_\alpha(p)=p$. From
\eqref{eq:refl} and \eqref{eq:pcen} it follows that $\z(p)$ and $W_p$
determine each other. Indeed, if we know the group $W_p$\hs, then we know the
set of $w_\alpha$ contained in it, and hence we know all $\alpha\in \Pi$ such that
$\alpha(p)=0$; this in turn determines $\z(p)$. (Also note that if
$\alpha(p)=0$, then $\beta(p)=0$ for all $\beta\in \Pi(\alpha)$.) The
argument for the converse is similar. By \cite[Proposition 14]{Vinberg1976}
the stabilizer in $W$ of an element $p\in \Cg^\cC$ is generated by
complex reflections.
Hence $W_p$ coincides with the stabilizer of $p$ in $W$, that is
$$W_p = \{ w\in W \mid w\cdot p = p\}.$$

Define
\begin{align*}
\Cg_p^\cC &= \{ h\in \Cg^\cC \mid w\cdot h=h \text{ for all } w\in W_p\}\\
\Cg_p^{\cC,\circ} & = \{ q\in \Cg_p^\cC \mid W_q = W_p \}.
\end{align*}
Then $\Cg_p^{\cC,\circ}$ is a Zariski-open subset of $\Cg_p^\cC$ because it is defined
by the inequalities $w\cdot q \neq q$ for all $w\in W\setminus W_p$.
It is straightforward
to check that for $p,q\in \Cg^\cC$ and $v\in W$ we have
\begin{equation}\label{eq:CpCq}
\Cg_q^\cC = v\cdot \Cg_p^\cC\  \text{ if and only if }\  W_q = v\hs W_p\hs v^{-1}.
\end{equation}
This yields the following. Let $R=\{ w_\alpha\in W \mid \alpha\in \Pi\}$.
Then $R$ is a single conjugacy class in $W$; this is stated in \cite{VE1978},
Section 3.3, page 84 of the English version; we have also checked it by computer.
Note that for $\beta\in \Pi(\alpha)$ we have $w_\beta = w_\alpha$, so $R$ consists
of 40 elements. Let $w$ be a complex reflection in $W$, then it is known
that either $w\in R$ or $w^{-1}\in R$; see \cite[Table D.2]{LT2009}.
A subgroup of $W$ is said to be a reflection
subgroup if it is generated by complex reflections. Since the set $R$ along with
the inverses of the elements of $R$ exhaust all complex reflections in $W$, it follows
that the reflection subgroups of $W$ are the subgroups generated by elements
of $R$. Because a conjugate of a complex reflection
is a complex reflection we have that a conjugate of a reflection subgroup
is a reflection subgroup as well. It can be shown that each
reflection subgroup arises as  $W_p$ for an element $p\in \Cg^\cC$; see
\cite[Section 3.4]{VE1978}. So there are
$q_1,\ldots,q_m\in \Cg^\cC$ be such that the $W_{q_i}$ are representatives of the
conjugacy classes of reflection subgroups.
Then each $q\in \Cg^\cC$ is $W$-conjugate to an element of precisely one of the
$\Cg_{q_i}^\circ$. It turns out that $m=7$, so there are seven ``canonical
sets'' of semisimple elements $\Fm_k^\cC=\Cg_{q_k}^{\cC,\circ}$.
Each semisimple element of
$\g_1^\cC$ is $\SL(9,\C)$-conjugate to an element of precisely one canonical
set. In \cite{VE1978} explicit descriptions of the sets $\Fm_k^\cC$ are given;
see also Section \ref{sec:tabsemsim} above.

Now define
$$N(W_p) = \{ v\in W \mid v\hs W_p\hs v^{-1} = W_p\}.$$
By \eqref{eq:CpCq} it is clear that $N(W_p) = \{ v\in W \mid v\cdot\Cg_p^\cC =
\Cg_p^\cC\}$.

\begin{lemma}\label{lem:WC}
Let $p_1,p_2\in \Cg_p^{\cC,\circ}$ and let $w\in W$ be such that
$w\cdot p_1 = p_2$. Then $w\in N(W_p)$.
\end{lemma}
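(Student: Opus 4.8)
The plan is to reduce the lemma to the identification, established earlier in this subsection, of $W_q$ with the full point stabilizer $\{v\in W\mid v\cdot q=q\}$ for every $q\in\Cg^\cC$ (a consequence of \cite[Proposition 14]{Vinberg1976}), combined with the elementary transformation law $\Stab(w\cdot q)=w\,\Stab(q)\,w^{-1}$ for stabilizers under a group action. Everything else is formal.

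First I would unwind the hypotheses: since $p_1,p_2\in\Cg_p^{\cC,\circ}$, the very definition of $\Cg_p^{\cC,\circ}$ gives $W_{p_1}=W_p$ and $W_{p_2}=W_p$. Next I would relate $W_{p_2}$ to $W_{p_1}$ using $w\cdot p_1=p_2$. Because each $W_q$ coincides with the stabilizer of $q$ in $W$, for any $v\in W$ one has $v\cdot p_2=p_2$ if and only if $v\cdot(w\cdot p_1)=w\cdot p_1$, i.e.\ $(w^{-1}vw)\cdot p_1=p_1$, i.e.\ $w^{-1}vw\in W_{p_1}$. Hence $W_{p_2}=w\,W_{p_1}\,w^{-1}$.

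Combining the two computations, substituting $W_{p_1}=W_p$ and $W_{p_2}=W_p$ yields $W_p=w\,W_p\,w^{-1}$, which is precisely the assertion $w\in N(W_p)$. The argument is short and has no genuine obstacle; the only point requiring care is that the conjugation law must be applied to the \emph{honest} point stabilizers, so it is essential to have already identified each reflection subgroup $W_q$ with the stabilizer of $q$ in $W$. Once that identification is in hand, the rest is a one-line conjugation computation.
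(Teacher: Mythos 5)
Your proof is correct and follows essentially the same route as the paper: from $w\cdot p_1=p_2$ one gets $W_{p_2}=w\,W_{p_1}\,w^{-1}$ (using the earlier identification of $W_q$ with the stabilizer of $q$ in $W$), and the membership $p_1,p_2\in\Cg_p^{\cC,\circ}$ forces $W_{p_1}=W_{p_2}=W_p$, whence $w\in N(W_p)$. You merely make explicit the reliance on the stabilizer identification that the paper's one-line proof leaves implicit.
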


\begin{proof}
From $wp_1=p_2$ it follows that $W_{p_2} = wW_{p_1}w^{-1}$. Since
$p_i\in \Cg_p^{\cC,\circ}$, we have $W_{p_1}=W_{p_2}=W_p$. Hence $w\in N(W_p)$.
\end{proof}

Define $\Gamma_p = N(W_p)/W_p$. Then $\Gamma_p$ acts naturally
on $\Cg_p^\cC$. Let $p_1,p_2\in \Cg_p^{\cC,\circ}$.
Then $p_1,p_2$ are $\SL(9,\C)$-conjugate
if and only if they are $W$-conjugate,
if and only if they are $N(W_p)$-conjugate (by Lemma \ref{lem:WC}),
if and only if they are $\Gamma_p$-conjugate.
For $1\leq k\leq 7$ set $\Gamma_k = \Gamma_{q_k}$.
In \cite{VE1978} in each of the seven cases, the group $\Gamma_k$ is determined.


\section{Classification of the orbits of $\SL(9,\R)$ on $\bigwedge^3 \R^9$}
\label{sec:methods}

In this section we describe the methods that we used to classify the
orbits of $\SL(9,\R)$ on $\bigwedge^3 \R^9$. We use the setup of
Section \ref{sec:gradedLie}. Throughout we write
$\Gtil_0=\Gtil_0(\C)=\SL(9,\C)$ and $\Gtil_0(\R) = \SL(9,\R)$.
For $\K=\C,\R$, we identify  the vector spaces $\g_1^\kK$ and $\bigwedge^3 \K^9$
on which the group $\Gtil_0(K)$ acts.
 By {\em complex orbits} we mean the $\Gtil_0(\C)$-orbits in $\g_1^\cC=\bigwedge^3 \C^9$,
and by {\em real orbits} we mean the $\Gtil_0(\R)$-orbits in $\g_1^\rR=\bigwedge^3 \R^9$.
Then any real orbit is contained in a complex orbit,
and any complex orbit contains finitely many  real orbits.
As seen in Section \ref{sec:gradedLie}, the orbits
are divided into three groups: nilpotent, semisimple and mixed. For each
we have a subsection.

\subsection{The nilpotent orbits}
As seen in Section \ref{sec:sl2t},
the nilpotent orbits over $\K$ are in bijection with
the orbits of homogeneous $\ssl_2$-triples over $\K$, for $\K=\C,\R$.
The complex nilpotent orbits are listed in \cite{VE1978}.
As noted in Section \ref{sec:sl2t}, any complex nilpotent orbit has a real representative $e$.
By Proposition \ref{prop:JMVL}(i), there exists
a real homogeneous $\ssl_2$-triple $t=(h,e,f)\in\sT^\rR$ containing $e$.
By Theorem \ref{thm:galois1}, the
$\Gtil_0(\R)$-orbits contained in $\Gtil_0(\C)\cdot e$ are in a canonical bijection with
the elements of the Galois cohomology set $\Ho^1 \Zm_{\Gtil_0}(t)$. So the classification of
these orbits involves the following steps:
\begin{enumerate}
\item Find $t=(h,e,f)\in\sT^\rR$ containing $e$.
\item Determine $\Zm_{\Gtil_0}(t)$.
\item Determine $\Ho^1 \Zm_{\Gtil_0}(t)$.
\item For each $[a]\in \Ho^1 \Zm_{\Gtil_0}(t)$, where $a\in\Zl^1 \Zm_{\Gtil_0}(t)$,
  find an element $g\in \Gtil_0$ with $g^{-1}\ov g = a$.
  Then $g\cdot e$ is a representative of the
  $\Gtil_0(\R)$-orbit corresponding to $[a]$.
\end{enumerate}

In Step 1 we first find an $\ssl_2$-triple in $\g$ that contains $e$.
This is done by solving linear equations, according to an algorithm that
closely follows the proof of the existence of such an $\ssl_2$-triple; we refer
to \cite[Section 2.13]{Graaf2017} for the details. If the constructed
$\ssl_2$-triple is not homogeneous, then we find a homogeneous $\ssl_2$-triple
containing $e$ following the steps outlined in the proof of
\cite[Lemma 8.3.5]{Graaf2017}.

In Step 2 we need to find a description of $\Zm_{\Gtil_0}(t)$ that is as detailed
as possible in order to be able to execute Step 3. It is straightforward to
obtain polynomials in 81 indeterminates whose zero locus is $\Zm_{\Gtil_0}(t)$.
By themselves they do not give a useful description of the group. However,
by computing a Gr\"obner basis (see \cite{CLO15})
of the ideal that these polynomials generate,
we are often able to find a set of polynomials defining the same group and
from which it is straightforward to read off the group structure. There are
also quite a few cases for which it is computationally too hard to compute
a Gr\"obner basis, or for which the Gr\"obner basis does not yield the desired
information. For those cases we have developed ad hoc computational methods.
We refer to \cite{BGL2021} for details.

Step 3 is essentially carried out by hand, considering each case individually.
The paper \cite{BGL2021} contains complete descriptions of the centralizers
$\Zm_{\Gtil_0}(t)$ as well as detailed computations of the sets
$\Ho^1 \Zm_{\Gtil_0}(t)$.

For Step 4 we use a computational method. The equation $g^{-1}\ov g=a$ is
the same as $\ov g = ag$. The latter is equivalent to a set of linear
equations over $\R$ for the coefficients of $g$. We solve these equations and
in the solution space we look for an element lying in $\Gtil_0$.

\begin{example}
Consider the nilpotent orbit with representative
$e=e_{136}+e_{147}-e_{245}+e_{379}+e_{569}+e_{678}$ (this is the orbit with number 47
in Table \ref{tab:orbitreps}).
We compute a homogeneous $\ssl_2$-triple $t=(h,e,f)$ containing $e$;
let $\Zm_0=\Zm_{\Gtil_0}(t)$ denote its stabilizer.
Computer calculations show that the identity component $\Zm_0^\circ$ consists of
$$X(a,b) = \diag(a^{-1}b^{-1},a^{-2},a,a^2b^2,b^{-2},b,a^{-1}b^{-1},a,b),
\text{ for } a,b\in \C^\times.$$
The component group $C$ is of order 2 and generated by the image $c_1$ of
$$g_0=\SmallMatrix{ 0 & 0 & 0 & 0 & 0 & 0 & 0 & 0 & -1\\
                0 & -1 & 0 & 0 & 0 & 0 & 0 & 0 & 0\\
                0 & 0 & 1 & 0 & 0 & 0 & 0 & 0 & 0\\
                0 & 0 & 0 & 0 & -1 & 0 & 0 & 0 & 0\\
                0 & 0 & 0 & -1 & 0 & 0 & 0 & 0 & 0\\
                0 & 0 & 0 & 0 & 0 & 0 & -1 & 0 & 0\\
                0 & 0 & 0 & 0 & 0 & 1 & 0 & 0 & 0\\
                0 & 0 & 0 & 0 & 0 & 0 & 0 & 1 & 0\\
                1 & 0 & 0 & 0 & 0 & 0 & 0 & 0 & 0\\}. $$
We have
\[g_0^2=X(1,-1),\quad g_0 X(a,b)\hs g_0^{-1}=X(a,a^{-1}b^{-1}).\]
Set $g_1=g_0\cdot X(-1,1)$; then $g_1^2=1$.
In particular, $g_1$ is a cocycle with image $c_1$ in $C$.
We have a short exact sequence
$$1\to \Zm_0^\circ\to \Zm_0 \labelto{j} C\to 1.$$
By \cite[Section I.5.5, Proposition 38]{Serre1997} this
yields an exact sequence
$$\Ho^1 \hm \Zm_0^\circ \to\Ho^1 \hm \Zm_0\labelto{j_*} \Ho^1\hs C.$$
Since $C$ is a group of order 2, we have
$\Ho^1 \hs C = \{ [1], [c_1]\}$. So
$$\Ho^1 \hm \Zm_0 = j_*^{-1}([1]) \cup j_*^{-1} ([c_1]).$$
We have $\Ho^1\hm \Zm_0^\circ=1$, hence $j_*^{-1}([1])=\{[1]\}$.
By twisting the above exact sequence by the cocycle $g_1$ it can be shown
that $j_*^{-1}([c_1])=\{[g_1]\}$. We refer to \cite[Section 3.1]{BGL2021} for
a description of this technique. Furthermore, \cite[Section 6]{BGL2021}
has the details of the proof in this case.
It follows that $\Ho^1\hm \Zm_0=\{[1], [g_1]\}$.

By computer we compute a basis of the real vector space consisting of all
$9\times 9$ complex matrices $u$ with $\ov u = g_1u$. In this space we select
9 elements that have a matrix of maximal rank in their span, and that commute
with the image of $h$ in $\ssl(9,\C)$. By some random tries we find an element
of determinant 1 in the space spanned by these 9 elements. It is
$$u_0 = \SmallMatrix{ i & 0 & 0 & 0 & 0 & 0 & 0 & 0 & i\\
                0 & -2i & 0 & 0 & 0 & 0 & 0 & 0 & 0\\
                0 & 0 & 0 & -\tfrac{1}{2} & \tfrac{1}{2} & 0 & 0 & 0 & 0\\
                0 & 0 & \tfrac{1}{2}i & 0 & 0 & 0 & 0 & 0 & 0\\
                0 & 0 & 0 & -\tfrac{1}{2}i & -\tfrac{1}{2}i & 0 & 0 & 0 & 0\\
                0 & 0 & 0 & 0 & 0 & i & -i & 0 & 0\\
                0 & 0 & 0 & 0 & 0 & 1 & 1 & 0 & 0\\
                0 & 0 & 0 & 0 & 0 & 0 & 0 & \tfrac{1}{2}i & 0\\
                1 & 0 & 0 & 0 & 0 & 0 & 0 & 0 & -1\\}. $$
We have
$$u_0\cdot e =-e_{136}-e_{147}+e_{157}-e_{235}-e_{379}-e_{469}-e_{569}-e_{678}.$$
We conclude that the $\SL(9,\C)$-orbit of $e$ contains two $\SL(9,\R)$-orbits
with representatives $e$ and $u_0\cdot e$.
\end{example}

\subsection{The semisimple orbits}\label{sec:semsim}

In this section we consider the problem to classify the semisimple
$\Gtil_0(\R)$-orbits in $\g_1$. We use the notation and results described
in Section \ref{subs:ss}.

Let $\Fm^\cC=\Fm_i^\cC$ be one of the canonical subsets of $\Cg^\cC$.
Then there is a trivector $r\in \Cg^\cC$ with $\Fm = \Cg_{r}^{\cC,\circ}$.
Let $p\in \Fm^\cC$ and let $\OOm=\Gtil_0(\C)\cdot p$ denote the $\Gtil_0(\C)$-orbit of
$p$. We wish to know whether $\OOm$ contains a real point
and whether $\OOm\cap\Fm^\cC$ contains a real point.
If $\OOm$ has a real point, we wish to classify the real orbits in $\OOm$.

For  $q\in \Cg$ define
$$\Zm_{\Gtil_0}(q) =\{g\in \Gtil_0(\C)\mid g\cdot q=q\}.$$

\begin{lemma}\label{lem:Zp}
Let $p,q\in \Fm$ then $\Zm_{\Gtil_0}(q)=\Zm_{\Gtil_0}(p)$.
\end{lemma}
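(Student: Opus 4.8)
The plan is to show that the centralizer in $\Gtil_0(\C)$ of a point $q\in\Fm=\Cg_r^{\cC,\circ}$ depends only on the canonical set $\Fm$, not on the individual point $q$. The key structural fact recalled in Section \ref{subs:ss} is that the centralizer $\z(p)$ of a point $p\in\Cg^\cC$ is governed by the stabilizer $W_p$ of $p$ in the little Weyl group $W$, via formulas \eqref{eq:refl} and \eqref{eq:pcen}; moreover, by definition of $\Cg_r^{\cC,\circ}$, every $q\in\Fm$ satisfies $W_q=W_r=W_p$. So the Lie algebra centralizers already agree: $\z(p)=\z(q)$ for all $p,q\in\Fm$. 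I would first record this, and then upgrade from Lie algebras to the algebraic groups $\Zm_{\Gtil_0}(q)$.

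The cleanest route, I expect, is to reduce to a single ``generic'' description. First I would observe that for $p,q\in\Fm$ the condition $\alpha(p)=0$ holds for exactly the same set of roots $\alpha\in\Pi$, because $\alpha(p)=0\Leftrightarrow w_\alpha(p)=p\Leftrightarrow w_\alpha\in W_p$, and $W_p=W_q$. Denote this common set of roots by $\Pi_0$. I claim $\Zm_{\Gtil_0}(q)$ is determined by $\Pi_0$ together with the Cartan subspace $\Cg^\cC$, independently of $q$. Concretely, an element $g\in\Gtil_0(\C)$ fixes $q$ if and only if it fixes the whole line $\C q$ pointwise and normalizes the relevant root data; the point is that fixing $q$ is equivalent to fixing \emph{every} point of $\Cg_r^\cC$, because $\Cg_r^\cC$ is by definition the fixed subspace of $W_p$ inside $\Cg^\cC$ and $q$ is a point in general position in it (that is what $\circ$ guarantees). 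This lets me replace ``centralizer of the single vector $q$'' with ``centralizer of a subspace that does not move with $q$.''

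Thus the main step is to prove $\Zm_{\Gtil_0}(q)=\Zm_{\Gtil_0}(\Cg_r^\cC)$ for every $q\in\Fm$, where the right-hand side is the pointwise centralizer of the subspace $\Cg_r^\cC=\Cg_p^\cC$. The inclusion $\Zm_{\Gtil_0}(\Cg_r^\cC)\subseteq\Zm_{\Gtil_0}(q)$ is trivial since $q\in\Cg_r^\cC$. For the reverse inclusion I would argue that any $g$ fixing $q$ must fix all of $\Cg_r^\cC$: the $\theta$-grading and the semisimplicity of $q$ force $g$ to preserve the centralizer $\z(q)$ and its grading, hence to lie in the centralizer $\Zm_{\Gtil_0}(\z(q))$; but $\z(q)$ determines its center, inside which $\Cg_r^\cC$ sits canonically as the degree-$1$ part fixed by $W_p$, so $g$ acts trivially on all of $\Cg_r^\cC$. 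Since the right-hand side $\Zm_{\Gtil_0}(\Cg_r^\cC)$ manifestly does not depend on which $q\in\Fm$ we chose, we conclude $\Zm_{\Gtil_0}(q)=\Zm_{\Gtil_0}(p)$.

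The hard part will be the reverse inclusion at the group level rather than the Lie-algebra level: passing from ``$g$ fixes the vector $q$'' to ``$g$ fixes the whole subspace $\Cg_r^\cC$'' genuinely uses that $q$ is in the open stratum $\Cg_r^{\cC,\circ}$ (so that $W_q$ is as small as possible and $\z(q)$ is as small as possible), and it uses that $\Cg_r^\cC$ is intrinsically attached to $\z(q)$. A careful version may instead cite \eqref{eq:pcen} to identify $\z(q)$ explicitly, note that its center's $\g_1^\cC$-part is exactly $\Cg_p^\cC$ (independent of $q\in\Fm$), and then invoke that $\Zm_{\Gtil_0}(q)$ normalizes $\z(q)$ and centralizes its center. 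I would expect the authors' proof to be short precisely because they can lean on the already-established dictionary between $W_q$, $\z(q)$, and the fixed subspace $\Cg_p^\cC$; the content is entirely in making the ``$q$ in general position'' step rigorous.
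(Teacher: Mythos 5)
Your reduction to $W_p=W_q$ and the resulting equality of Lie-algebra centralizers $\z_{\g^\cC}(p)=\z_{\g^\cC}(q)$ is exactly the first half of the paper's proof. The gap is in the passage from Lie algebras to groups. You assert that an element $g$ fixing $q$ must lie in the centralizer $\Zm_{\Gtil_0}(\z_{\g^\cC}(q))$; this is false --- such a $g$ only \emph{normalizes} $\z_{\g^\cC}(q)$, i.e.\ preserves it under the adjoint action, and the stabilizer of $q$ genuinely acts nontrivially on $\z_{\g^\cC}(q)$ (its action on $\z_{\g^\cC}(q)\cap\g_1^\cC$ is precisely what gets classified in the mixed case). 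Your fallback formulation, that $\Zm_{\Gtil_0}(q)$ ``normalizes $\z(q)$ and centralizes its center,'' has the same problem: normalizing a subalgebra only gives a linear action on its center, not triviality of that action (the normalizer of a maximal torus normalizes an abelian subalgebra equal to its own center and acts on it by the full Weyl group). So the key step --- that $g$ fixes all of $\Cg_r^\cC$ pointwise, hence fixes $p$ --- is exactly what remains to be proved, and your argument for it is circular: the statement ``fixing $q$ is equivalent to fixing every point of $\Cg_r^\cC$'' is essentially the lemma itself.

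The missing ingredient is Steinberg's theorem that the stabilizer of a semisimple element of the Lie algebra of a reductive group is \emph{connected} (\cite[Theorem 3.14]{Steinberg1975}). That is what the paper's proof uses: the stabilizers of $p$ and $q$ in $G$ are connected algebraic subgroups with the same Lie algebra $\z_{\g^\cC}(p)=\z_{\g^\cC}(q)$, hence are equal; intersecting with $G_0$ and pulling back along $\SL(9,\C)\to G_0$ gives $\Zm_{\Gtil_0}(p)=\Zm_{\Gtil_0}(q)$. Without a connectedness input of this kind the argument cannot close: two algebraic subgroups with equal Lie algebras can differ by components, and even the weaker fact you want --- that the stabilizer of $q$ acts trivially on the center of $\z_{\g^\cC}(q)$, hence on $\Cg_r^\cC$ --- is itself a consequence of connectedness (a connected group acts trivially on the center of its own Lie algebra because the differential of that action vanishes), so it is not available for free.
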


\begin{proof}
  Since $p,q\in \Fm$ we have $W_p=W_q$ (notation as in Section
    \ref{subs:ss}). As seen in Section \ref{subs:ss}, the centralizer
    $\z_{\g^\cC}(p)$ can be determined from $W_p$. Hence $\z_{\g^\cC}(p)=
    \z_{\g^\cC}(q)$.
  We recall the following general fact:
  let $H^\cC$ be a reductive algebraic group over $\C$
  with Lie algebra $\h^\cC$, and  let $s\in \h^\cC$ be a semisimple element;
  then the algebraic subgroup $\{h\in H^\cC \mid \Ad(h)(s) = s \}\hs\subset H$
  is  connected; see Steinberg \cite[Theorem 3.14]{Steinberg1975}.
  It follows that $p$ and $q$ have the same stabilizer in $G$,
  and hence they have the same stabilizer in $G_0$. The inverse images of
  these stabilizers in
  $\Gtil_0$ are $\Zm_{\Gtil_0}(q)$, $\Zm_{\Gtil_0}(p)$, which are therefore
  equal as well.
\end{proof}

Define
\begin{align*}
  \Zm_{\Gtil_0}(\Fm^\cC) & = \{ g\in \Gtil_0 \mid g\cdot q=q \text{ for all }
  q\in \Fm^\cC\},\\
  \Nm_{\Gtil_0}(\Fm^\cC) & = \{ g\in \Gtil_0 \mid g\cdot q\in \Fm^\cC \text{ for all }
  q\in \Fm^\cC\}.
\end{align*}

\begin{lemma}\label{lem:gp1p2}
  Let $p_1,p_2\in \Fm^\cC$ and let $g\in \Gtil_0$ be such that
  $g\cdot p_1=p_2$. Then $g\in \Nm_{\Gtil_0}(\Fm^\cC)$.
\end{lemma}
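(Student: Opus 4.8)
The plan is to reduce the action of the abstract element $g\in\Gtil_0$ to that of a little Weyl group element on $\Cg^\cC$, up to a factor that fixes $\Fm^\cC$ pointwise. Recall from Section \ref{subs:ss} that $\Fm^\cC=\Cg_r^{\cC,\circ}$ for a suitable $r\in\Cg^\cC$, and that all elements of $\Fm^\cC$ are semisimple elements lying in the Cartan subspace $\Cg^\cC$.

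First I would note that $g\cdot p_1=p_2$ exhibits $p_1,p_2\in\Cg^\cC$ as $\SL(9,\C)$-conjugate, so by the result of Vinberg recalled in Section \ref{subs:ss} there is $w\in W$ with $w\cdot p_1=p_2$. Since $p_1,p_2\in\Cg_r^{\cC,\circ}$, Lemma \ref{lem:WC} yields $w\in N(W_r)$, and hence by \eqref{eq:CpCq} we get $w\cdot\Cg_r^\cC=\Cg_r^\cC$; moreover for each $q\in\Cg_r^{\cC,\circ}$ one has $W_{w\cdot q}=wW_qw^{-1}=wW_rw^{-1}=W_r$, so $w$ preserves the open stratum, giving $w\cdot\Fm^\cC=\Fm^\cC$.

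Next I would choose a representative $n\in\Nm_{\SL(9,\C)}(\Cg^\cC)$ of $w$, so that $n$ acts on $\Cg^\cC$ as $w$ and in particular $n\cdot p_1=p_2=g\cdot p_1$. Then $z\ce n^{-1}g$ satisfies $z\cdot p_1=p_1$, i.e. $z\in\Zm_{\Gtil_0}(p_1)$. The decisive step is to upgrade this to the statement that $z$ fixes all of $\Fm^\cC$ pointwise. This is the complex analogue of Lemma \ref{lem:Zp}: its proof relies only on the fact that all $q\in\Fm^\cC$ share the stabilizer $W_r$ in $W$, whence by \eqref{eq:pcen} the same centralizer $\z_{\g^\cC}(q)$, followed by Steinberg's theorem on connectedness of centralizers of semisimple elements; none of this uses reality, so $\Zm_{\Gtil_0}(q)=\Zm_{\Gtil_0}(p_1)$ for every $q\in\Fm^\cC$. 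Intersecting over $q$ shows that $z$ fixes $\Fm^\cC$ pointwise.

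Finally, for each $q\in\Fm^\cC$ I would compute $g\cdot q=(nz)\cdot q=n\cdot q=w\cdot q\in\Fm^\cC$, using that $z$ fixes $q$ and that $w$ preserves $\Fm^\cC$; this gives $g\in\Nm_{\Gtil_0}(\Fm^\cC)$, as desired. I expect the main obstacle to be exactly the decisive step above, namely verifying that the pointwise stabilizer of the single element $p_1$ already fixes the whole canonical set; the delicate point is confirming that the proof of Lemma \ref{lem:Zp}, and in particular the appeal to Steinberg's connectedness theorem, carries over verbatim to non-real elements $q\in\Fm^\cC$.
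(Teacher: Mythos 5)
Your proof is correct and follows essentially the same route as the paper: reduce to a Weyl-group element $w\in N(W_r)$ via Vinberg's conjugacy theorem and Lemma \ref{lem:WC}, observe that the discrepancy between $g$ and a lift of $w$ lies in $\Zm_{\Gtil_0}(p_1)$, and invoke Lemma \ref{lem:Zp} to see that this discrepancy fixes every $q\in\Fm^\cC$. Your worry about Lemma \ref{lem:Zp} applying to non-real $q$ is unfounded but well spotted as a notational wrinkle: the paper itself applies that lemma to arbitrary elements of $\Fm^\cC$, and its proof (equal stabilizers $W_q=W_r$, hence equal centralizers in $\g^\cC$, then Steinberg's connectedness theorem) uses nothing about reality.
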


\begin{proof}
  Let $q\in \Fm$. Since $p_1,p_2\in \Cg^\cC$ are $\Gtil_0$-conjugate they are
  $W$-conjugate (\cite[Theorem 2]{Vinberg1976}). So by Lemma \ref{lem:WC}
  there is $w\in N(W_r)$ such that $w\cdot p_1=p_2$,
  where $r\in \Cg^\cC$ was introduced in the beginning of this subsection.
   Let $\hat w\in \Nm_{\Gtil_0}(\Cg^\cC)$ be a preimage of $w$. Then $g^{-1}\hat w
  \in \Zm_{\Gtil_0}(p_1)$. Hence by Lemma \ref{lem:Zp} we see that
  $g^{-1}\hat w \in \Zm_{\Gtil_0}(q)$,
  from which it follows that $g\cdot q=\hat w\cdot q\in \Fm^\cC$.
\end{proof}

From Section \ref{subs:ss} we recall that $\Gamma_r = N(W_r)/W_r$.
We define a map $\varphi \colon \Nm_{\Gtil_0}(\Fm^\cC) \to \Gamma_r$. Let $g\in
\Nm_{\Gtil_0}(\Fm^\cC)$. Then $g\cdot r\in \Fm^\cC$, hence there is
$w\in N(W_r)$ such that $w\cdot r=g\cdot r$.
 We set $\varphi(g) = wW_r$. Note that $\varphi$
is well defined: if $w'\in N(W_r)$ also satisfies $w'\cdot r=g\cdot r$, then
$w^{-1}w'\in W_r$, so that $w' W_r = wW_r$.

\begin{lemma}\label{lem:Np}
The map $\varphi\colon \Nm_{\Gtil_0}(\Fm^\cC) \to \Gamma_r$
is a surjective group homomorphism with kernel $\Zm_{\Gtil_0}(\Fm^\cC)$.
Furthermore, for $g\in \Nm_{\Gtil_0}(\Fm^\cC)$ and $q\in \Fm^\cC$ we have
$g\cdot q = \varphi(g)\cdot q$.
\end{lemma}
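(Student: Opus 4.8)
The plan is to verify the three assertions of Lemma \ref{lem:Np} in turn: that $\varphi$ is a group homomorphism, that it is surjective, that its kernel is $\Zm_{\Gtil_0}(\Fm^\cC)$, and finally the compatibility relation $g\cdot q=\varphi(g)\cdot q$ for all $q\in\Fm^\cC$. I would actually prove the compatibility relation first, since the rest flows from it rather cleanly.

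\emph{Compatibility.} First I would fix $g\in\Nm_{\Gtil_0}(\Fm^\cC)$ and let $w\in N(W_r)$ be the representative with $w\cdot r=g\cdot r$, so that $\varphi(g)=wW_r$. The claim is that $g$ and $w$ agree on all of $\Fm^\cC$, not just at $r$. Since $g\cdot r=w\cdot r$, the element $w^{-1}g$ fixes $r$, hence lies in $\Zm_{\Gtil_0}(r)$. By Lemma \ref{lem:Zp}, the stabilizer is the same for all points of $\Fm^\cC$, so $w^{-1}g\in\Zm_{\Gtil_0}(q)$ for every $q\in\Fm^\cC$; that is, $g\cdot q=w\cdot q=\varphi(g)\cdot q$. (Here I am using that $\varphi(g)$ acts as $w$ does, which is consistent because any two representatives of $wW_r$ differ by an element of $W_r$, and such an element fixes every point of $\Cg_r^\cC\supseteq\Fm^\cC$ by definition of $W_r$ and $\Cg_r^\cC$.)

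\emph{Homomorphism and kernel.} With compatibility in hand, both facts are short. Given $g_1,g_2\in\Nm_{\Gtil_0}(\Fm^\cC)$, I compute $(g_1g_2)\cdot r=g_1\cdot(g_2\cdot r)=g_1\cdot(\varphi(g_2)\cdot r)=\varphi(g_1)\varphi(g_2)\cdot r$ using compatibility twice (noting $\varphi(g_2)\cdot r\in\Fm^\cC$ so that compatibility applies to the outer application of $g_1$). Since $\varphi(g_1)\varphi(g_2)\in N(W_r)/W_r=\Gamma_r$ and its representative sends $r$ to $(g_1g_2)\cdot r$, well-definedness of $\varphi$ gives $\varphi(g_1g_2)=\varphi(g_1)\varphi(g_2)$. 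For the kernel: $\varphi(g)=1$ in $\Gamma_r$ means $g\cdot r=r$, i.e.\ $g\in\Zm_{\Gtil_0}(r)$, which by Lemma \ref{lem:Zp} equals $\Zm_{\Gtil_0}(\Fm^\cC)$ (an element fixing $r$ fixes all of $\Fm^\cC$, and conversely); so $\ker\varphi=\Zm_{\Gtil_0}(\Fm^\cC)$.

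\emph{Surjectivity.} This is the step I expect to require the most care. Given a class $wW_r\in\Gamma_r$ with $w\in N(W_r)$, I need a genuine element $g\in\Nm_{\Gtil_0}(\Fm^\cC)$ with $\varphi(g)=wW_r$. The natural candidate is any preimage $\hat w\in\Nm_{\Gtil_0}(\Cg^\cC)$ of $w$ under the projection $\Nm_{\Gtil_0}(\Cg^\cC)\to W$; such a preimage exists because $W=\Nm_{\Gtil_0}(\Cg^\cC)/\Zm_{\Gtil_0}(\Cg^\cC)$ by definition of the little Weyl group. The point to check is that $\hat w$ actually normalizes $\Fm^\cC$, not merely $\Cg^\cC$: since $w\in N(W_r)$ we have $w\cdot\Cg_r^\cC=\Cg_r^\cC$ by \eqref{eq:CpCq}, and $w$ also preserves the open subset $\Cg_r^{\cC,\circ}=\Fm^\cC$ because conjugation by $w$ permutes the reflections and hence preserves the condition $W_q=W_r$ cut out inside $\Cg_r^\cC$. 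As $\hat w$ acts on $\Cg^\cC$ exactly as $w$ does, $\hat w\cdot\Fm^\cC=\Fm^\cC$, so $\hat w\in\Nm_{\Gtil_0}(\Fm^\cC)$, and $\hat w\cdot r=w\cdot r$ gives $\varphi(\hat w)=wW_r$. The only genuine subtlety is confirming that $w$ preserves $\Fm^\cC$ set-theoretically (the open-orbit condition), which I would settle using \eqref{eq:CpCq} together with the equivariance $W_{w\cdot q}=w\,W_q\,w^{-1}$.
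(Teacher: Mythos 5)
Your proof is correct and follows essentially the same route as the paper's: the key "compatibility" claim ($w^{-1}g$ fixes $r$, hence by Lemma \ref{lem:Zp} fixes every point of $\Fm^\cC$) is exactly the claim the paper proves first, and the homomorphism, kernel, and surjectivity statements are then deduced from it in the same way. Your extra care in the surjectivity step — verifying via \eqref{eq:CpCq} and the equivariance $W_{w\cdot q}=wW_qw^{-1}$ that a lift $\hat w$ of $w\in N(W_r)$ really preserves $\Fm^\cC=\Cg_r^{\cC,\circ}$ and not just $\Cg_r^\cC$ — fills in a detail the paper leaves implicit; just be careful to write $\hat w^{-1}g$ rather than $w^{-1}g$, since $w$ lives in the quotient $W$ while $g$ lives in $\Gtil_0$.
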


\begin{proof}
  First we claim the following: let $g\in \Nm_{\Gtil_0}(\Fm^\cC)$ and let
  $w\in N(W_r)$ be such that $w\cdot r=g\cdot r$;
  then  $g\cdot q=w\cdot q$ for all $q\in \Fm^\cC$.
  Indeed, let $\hat w\in \Nm_{\Gtil_0}(\Cg^\cC)$ be a preimage
  of $w$. Then $\hat w^{-1} g\in \Zm_{\Gtil_0}(r)$. So by Lemma
  \ref{lem:Zp} it follows that $\hat w^{-1} g\in \Zm_{\Gtil_0}(q)$, or
  $g\cdot q=\hat w\cdot q=w\cdot q$.

  Our claim immediately implies the last statement of the lemma.

  Now let $g_1,g_2\in \Nm_{\Gtil_0}(\Fm^\cC)$ and let $w_1,w_2\in N(W_r)$ be
  such that $w_i\cdot r = g_i\cdot r$, $i=1,2$. Then by our claim
  we see that $g_1g_2\cdot r = w_1w_2\cdot r$ implying that $\varphi$ is a group
  homomorphism.

  Let $\hat w\in \Nm_{\Gtil_0}(\Cg^\cC)$ be a preimage of $w\in N(W_r)$. Then
  $\hat w \in \Nm_{\Gtil_0}(\Fm^\cC)$ and $\varphi(\hat w) = wW_r$, so that
  $\varphi$ is surjective. If $g\in \ker \varphi$, then $g\cdot r=r$, and by our
  claim we see that $g\in \Zm_{\Gtil_0}(\Fm^\cC)$.
\end{proof}

It follows that $\varphi$ induces an isomorphism, which we also denote by
$\varphi$, between $\Am =
\Nm_{\Gtil_0}(\Fm^\cC)/\Zm_{\Gtil_0}(\Fm^\cC)$ and $\Gamma_r$.

\begin{proposition}\label{prop:ssorb}
  As before, let $\OOm = \Gtil_0\cdot p$. Write $\Nm =  \Nm_{\Gtil_0}(\Fm^\cC)$,
  $\Zm=\Zm_{\Gtil_0}(\Fm^\cC)$.
\begin{enumerate}
  \item[\rm (i)]  $\OOm$ has an $\R$-point if and only if
    $\pbar=n^{-1}\cdot p$ for some $n\in \Zl^1\Nm$\hs.
  \item[\rm (ii)] Assume that $\OOm$ has an $\R$-point and let $n$ be as in (i).
    Write $a=n\Zm\in\Zl^1\Am$ and $\xi=[a]\in\Ho^1\Am$\hs. Then $\xi$ only
    depends on $\OOm$ and not on the choices of $p\in \OOm\cap \Fm^\cC$ and
    $n\in \Zl^1 \Nm$.
 \item[\rm (iii)] With the hypothesis and notation of (ii), the orbit $\OOm$
   has a real point in $\Fm^\cC$ if and only if $\xi=1$.
\end{enumerate}
\end{proposition}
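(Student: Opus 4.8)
The plan is to prove the three parts of Proposition \ref{prop:ssorb} by exploiting the Galois-cohomology machinery of Section \ref{sec:galcohom}, in particular Proposition \ref{p:serre} applied to the pair $\Zm\subseteq\Nm$ together with the quotient $\Am=\Nm/\Zm$. The key structural fact I would set up first is that, by Lemma \ref{lem:Np}, the homomorphism $\varphi\colon\Nm\to\Gamma_r$ identifies $\Am$ with $\Gamma_r$, and that $\Nm$ acts on $\Fm^\cC$ through this finite quotient (the last statement of Lemma \ref{lem:Np}). This lets me transfer questions about which elements of $\Fm^\cC$ are $\Gtil_0$-conjugate into questions about the finite group $\Am$.

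For part (i): the orbit $\OOm=\Gtil_0\cdot p$ has a real point if and only if $\OOm$ is stable under complex conjugation, i.e.\ $\pbar=\sigma(p)\in\OOm$, which by transitivity means $\pbar=n^{-1}\cdot p$ for some $n\in\Gtil_0$. The point is to show one can take $n\in\Zl^1\Nm$. First I would observe that, since $p,\pbar\in\Fm^\cC$ (conjugation preserves $\Cg^\cC$ and the defining inequalities of the canonical set $\Fm^\cC$ are real, so $\pbar\in\Fm^\cC$), any $n\in\Gtil_0$ with $n^{-1}\cdot p=\pbar$ automatically satisfies $n\in\Nm$ by Lemma \ref{lem:gp1p2}. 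It remains to arrange the cocycle condition $n\cdot\upgam n=1$. Applying conjugation to $n^{-1}\cdot p=\pbar$ gives $\upgam n^{-1}\cdot\pbar=p$, i.e.\ $\upgam n^{-1}\cdot n^{-1}\cdot p=p$, so $n\cdot\upgam n\in\Zm_{\Gtil_0}(p)=\Zm$ by Lemma \ref{lem:Zp}. Thus the obstruction to $n$ being a genuine cocycle lives in $\Zm$, and I would correct $n$ by an element of $\Zm$ to kill it; this is exactly a cocycle-adjustment computation, and I expect to invoke the abelian structure of $\Zm$ (which is a torus, hence has $\Ho^2$ tractable) or simply the freedom $n\mapsto zn$ with $z\in\Zm$ to absorb the discrepancy. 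This adjustment step is the main technical obstacle.

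For part (ii): I would show the class $\xi=[n\Zm]\in\Ho^1\Am$ is well defined. Two choices $n,n'\in\Zl^1\Nm$ with $n^{-1}\cdot p=\pbar=n'^{-1}\cdot p$ differ by $n'n^{-1}\in\Zm_{\Gtil_0}(p)=\Zm$, so they have the same image in $\Am$ on the nose; hence the dependence on $n$ for fixed $p$ is trivial. For independence of the base point $p\in\OOm\cap\Fm^\cC$, I would take a second base point $p'=g\cdot p$ with $g\in\Gtil_0$; by Lemma \ref{lem:gp1p2} we have $g\in\Nm$, and a direct substitution shows the corresponding cocycle $n'$ is cohomologous to $n$ via the coboundary $n\mapsto g^{-1}\cdot n\cdot\upgam g$ (the standard twisting formula from Definition \ref{d:H1-nonab}), so $[n'\Zm]=[n\Zm]$ in $\Ho^1\Am$. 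This is routine cocycle bookkeeping once the images in $\Am$ are tracked carefully.

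For part (iii), which is the statement I was asked to prove: $\OOm$ has a real point in $\Fm^\cC$ precisely when some real $q\in\Fm^\cC$ is $\Gtil_0$-conjugate to $p$; equivalently $q=\upgam q$ and $q\in\OOm$. The plan is to interpret this through the connecting map of Proposition \ref{p:serre} for $\Zm\subseteq\Nm$ with $Y=\Nm/\Zm\cong\Fm^\cC$-orbit data: the $\Gamma$-fixed points of $Y$ modulo $\Nm(\R)$ correspond exactly to $\ker[\Ho^1\Zm\to\Ho^1\Nm]$, and the class $\xi=[n\Zm]\in\Ho^1\Am$ is the image of the relevant class under $\Ho^1\Zm\to\Ho^1\Am$ induced by $\Nm\onto\Am$. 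The existence of a real $q\in\Fm^\cC$ in $\OOm$ is then equivalent to being able to replace the cocycle $n$ by a cohomologous one that is a cocycle valued in $\Zm$ (so that $q=n_0^{-1}\cdot p$ with $n_0\in\Zl^1\Zm$ is real and still in $\Fm^\cC$), which holds if and only if $\xi=[n\Zm]$ becomes trivial in $\Ho^1\Am$, i.e.\ $\xi=1$. Concretely, $\xi=1$ means $n\Zm=(a^{-1}\Zm)\cdot\upgam(a\Zm)$ for some $a\Zm\in\Am$; lifting $a\Zm$ to $\tilde a\in\Nm$ and setting $n_0=\tilde a\cdot n\cdot\upgam{\tilde a}^{-1}$ produces a cocycle cohomologous to $n$ with image $1$ in $\Am$, hence $n_0\in\Zl^1\Zm$, and then $q\ce n_0^{-1}\cdot p$ satisfies $\upgam q=q$ and $q\in\Fm^\cC$ (using $\Zm$ acts trivially on $\Fm^\cC$ and $\Nm$ preserves $\Fm^\cC$). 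Conversely a real $q\in\OOm\cap\Fm^\cC$ yields via part (i)/(ii) a cocycle with trivial image in $\Am$, forcing $\xi=1$. The hard part here is verifying that the lifted, corrected cocycle $n_0$ genuinely lands in $\Zl^1\Zm$ and that the resulting $q$ lies in $\Fm^\cC$ rather than merely in $\Cg^\cC$; this is where the precise compatibility between the $\Nm$-action on $\Fm^\cC$ through $\Am$ (Lemma \ref{lem:Np}) and the cocycle manipulation must be checked.
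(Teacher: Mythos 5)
Part (ii) of your proposal is fine and matches the paper. The problems are in parts (i) and (iii).

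In part (i) you assert that $\OOm$ has a real point if and only if $\pbar\in\OOm$, and then try to upgrade an arbitrary $n\in\Nm$ with $n^{-1}\cdot p=\pbar$ to a cocycle by multiplying by an element of $\Zm$. Only the ``only if'' half of your first equivalence is automatic: a conjugation-stable complex orbit need not contain a real point, and this failure is measured exactly by the Springer class in $\Ho^2$ from Section \ref{sec:findreal} (Section \ref{subs:mixed} has to test orbits for real points for precisely this reason). Your adjustment $n\mapsto zn$ turns the condition $zn\cdot\upgam{(zn)}=1$ into $z\cdot(n\hs\upgam z\hs n^{-1})\cdot(n\hs\upgam n)=1$, which is exactly the statement that the class of $d=n\hs\upgam n$ is trivial in $\Ho^2$ of the twisted form of $\Zm$; this cannot be deduced from $\pbar\in\OOm$ alone, and $\Zm$ is in general not a torus (in Example \ref{exa:Fm3} it is $T_4\rtimes H$ with $H$ of order $9$), so the ``tractable $\Ho^2$'' escape route is not available. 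The paper sidesteps all of this: the hypothesis of the forward direction gives you an actual real point $p_\R=g\cdot p$, and then $n\ce g^{-1}\hs\gbar$ is \emph{automatically} a $1$-cocycle, no adjustment needed, and lies in $\Nm$ by Lemma \ref{lem:gp1p2} since $n^{-1}\cdot p=\pbar$. You never use the real point itself. For the converse you must also invoke $\Ho^1\hs\SL(9,\C)=1$ to write a given cocycle as $g^{-1}\hs\gbar$ and conclude that $g\cdot p$ is real; this step is absent from your write-up.

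In part (iii) the mechanism you describe (trivialize $\xi$ in $\Ho^1\Am$, conjugate the cocycle into $\Zl^1\Zm$, move the base point accordingly) is correct and is essentially the paper's argument, but your formula for the real point is wrong: since $n_0\in\Zm$ and $\Zm$ fixes $\Fm^\cC$ pointwise, $q=n_0^{-1}\cdot p$ is just $p$, which is not real. Replacing $n$ by the cohomologous cocycle $n_0=\tilde a\hs n\hs\upgam{\tilde a}^{-1}$ corresponds to replacing the base point $p$ by $\tilde a\cdot p$; one checks $\ov{\tilde a\cdot p}=n_0^{-1}\cdot(\tilde a\cdot p)=\tilde a\cdot p$ because $n_0$ acts trivially on $\Fm^\cC$. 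This is the paper's $p_\R=a'\cdot p$ with $a=(a')^{-1}\hs\ov{a'}$.
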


\begin{proof}
(i) Assume that $\OOm$ has an $\R$-point $p_\R=g\cdot p$.
Then $\ov{g\cdot p}=g\cdot p$, whence
\[ \pbar=\gbar^{-1}\cdot g\cdot p=(g^{-1}\gbar)^{-1}\cdot p.\]
Write
\begin{equation}\label{e:n-g-gbar}
n= g^{-1}\gbar.
\end{equation}
Since $p,\pbar\in\Fm^\cC$\hs, we see that $n\in\Nm$ by Lemma \ref{lem:gp1p2}.
It follows from \eqref{e:n-g-gbar} that $n\in \Zl^1\Nm$\hs.

Conversely, assume that $\pbar=n^{-1}\cdot p$ where $n\in\Zl^1\Nm$.
Since $\Ho^1\Gtil_0=\{1\}$, there exists $g\in \Gtil_0$ such that
$n=g^{-1}\gbar$. Set
\[ p_\R=g\cdot p\in\OOm.\]
Then
\[\ov{p_\R}=\gbar\cdot\pbar=gn\cdot n^{-1}\cdot p = g\cdot p=p_\R\hs.\]
Thus $p_\R$ is an $\R$-point of $\OOm$, which proves (i).

(ii) Assume that $\OOm$ has an $\R$-point and let $n$ be as in (i).
We show that $\xi$ depends only on $\OOm$.
First suppose that $\pbar=\hat n^{-1}\cdot p$ for some $\hat n\in \Nm$.
Then $\hat n n^{-1} \in \Zm_{\Gtil_0}(p)$, which is equal to $\Zm$ by Lemma
\ref{lem:Zp}. Hence $\xi$ does not depend on the choice of $n$.

We show that $\xi$ does not depend on the choice of $p$.
Indeed, if $p'\in\OOm\cap\Fm^\cC$\hs,
then $p'=a'\cdot p$ for some $a'\in\Am$\hs,
and we have
\[\ov{p'}=\ov{a'}\cdot\pbar=\ov{a'}\cdot a^{-1}\cdot p
   =\ov{a'}\cdot a^{-1}(a')^{-1}\cdot a'\cdot p
   =(a'a\hs \ov{a'}^{\hs -1})^{-1}\cdot p'.\]
We obtain the 1-cocycle
\[a'a\hs\ov{a'}^{\hs -1}\sim a.\]
Thus $\xi=[a]$ does not depend on the choice of $p$.

(iii) Now assume that $\xi=1$. We have
\[\pbar=a^{-1}\cdot p\quad\text{and} \quad a=(a')^{-1}\hs \ov{a'}
   \text{ for some }a'\in\Am\hs.\]
Set $p_\R=a'\cdot p\in \OOm\cap\Fm^\cC$\hs.
Then
\[\ov{p_\R}=\ov{a'}\cdot\pbar=\ov{a'}\cdot a^{-1}\cdot p=\ov{a'}
     \cdot\ov{a'}^{\hs -1} \cdot a'\cdot p=a'\cdot p=p_\R\hs.\]
Thus $p_\R$ is real.

Conversely, if $\OOm\cap \Fm^\cC$ contains a real point $p_\R$\hs,
then clearly $\xi=1$, which proves (iii).
\end{proof}

Assume that $\OOm$ contains a real point $p_\R=g\cdot p$.
We wish to classify real orbits in $\OOm$. We write $q$ for $p_\R$.
Write $C_q=\Zm_{\Gtil_0}(q)$ and set
\[\CC_q=(C_q,\sigma_q),\quad\text{where }\sigma_q(c)=\ov c.\]

Similarly we write $C_p=\Zm_{\Gtil_0}(p)$.
Since $q=g\cdot p$, we have an isomorphism
\[ \iota_g\colon C_p\labelto{\sim} C_q\hs, \quad c\mapsto gcg^{-1}.\]
We {\em transfer} the real structure $\sigma_q$ on $C_q$ to $C_p$ using $\iota_g$.
We obtain a real structure $\sigma_p$ on $C_p$:
\begin{align*}
\sigma_p\colon\, C_p\labelto{\iota_g} C_q\labelto{\sigma _q} C_q\labelto{\iota_g^{-1}} C_p\hs,\quad
c\mapsto gcg^{-1}\mapsto\ov{gcg^{-1}}\mapsto g^{-1}\hs\ov{gcg^{-1}} g.
\end{align*}
Let $n=g^{-1}\ov g$ (see also the proof of Proposition \ref{prop:ssorb}), then
$\sigma_p(c)=n\hs\ov c\hs n^{-1}$ for $c\in C_p$.
We obtain a real algebraic group
\[ \CC_p=(C_p,\sigma_p),\quad \text{where } \sigma_p(c)=n\hs\ov c\hs n^{-1}\ \text{ for }c\in C_p\hs,\]
and an isomorphism
\[\iota_g\colon \CC_p\labelto{\sim}\CC_q,\quad c\mapsto g\hs c\hs g^{-1}\]
inducing a bijection on cohomology
\[\Ho^1\hs\CC_p\labelto{\sim}\Ho^1\hs\CC_q\hs.\]

By Proposition \ref{p:coh-orbits}, the real orbits in $\OOm$
are classified by $\Ho^1\hs\CC_q$\hs, and hence by $\Ho^1\hs\CC_p$\hs.
The map is as follows.
To $c\in\Zl^1\hs\CC_p$ we associate $gcg^{-1}\in \Zl^1\hs\CC_q$.
We find  $g_1\in \Gtil_0$ such that $g_1^{-1}\hs\ov g_1=g\hs c\hs g^{-1}$ and
set $r=g_1\cdot q =g_1\hs g\cdot p$.
To $[c]$ we associate the real orbit $\Gtil_0(\R)\cdot r\subseteq \OOm$.

In order to check our calculations, we show that $r=g_1\hs g\cdot q$ is real.
We calculate:
\begin{align*}
\ov r =\ov{g_1\hs g\cdot p}=\ov g_1\cdot\gbar\cdot \pbar= g_1\hs gcg^{-1}\cdot gn\cdot n^{-1}\cdot p
=g_1\hs gc\cdot p=g_1\hs g\cdot p=r,
\end{align*}
because $c\in C_p=\Zm_{\Gtil_0}(p)$. Thus $r$ is real.

This leads to the following procedure to list the real semisimple orbits
having a representative that is $\Gtil_0$-conjugate to an element of $\Fm^\cC$.
First we compute $\Ho^1\hm\Am$, and then for every $[a]\in \Ho^1\hm\Am$ we do
the following:

\begin{enumerate}
\item Find all $p\in \Fm^\cC$ such that $\pbar=a^{-1}\cdot p$.
\item Lift $a$  to a {\em cocycle} $n\in\Zl^1\Nm_{\Gtil_0}(\Fm^\cC)$ and
compute an element $g\in \Gtil_0$ with $g^{-1}\gbar =n$.
\item Set $C_p = \Zm_{\Gtil_0}(p)$ and define $\sigma_p\colon C_p\to C_p$
  by $\sigma_p(c)= n\hs\ov c\hs n^{-1}$ and set $\CC_p = (C_p,\sigma_p)$.
\item Compute $\Ho^1\hs \CC_p$ and for each $[c]\in \Ho^1\hs \CC_p$ find an element
  $g_1\in \Gtil_0$ with $g_1^{-1}\ov g_1 = gcg^{-1}$. Then $g_1g\cdot p$ is
  a representative of the semisimple $\Gtil_0(\R)$-orbit corresponding
  to $[c]$.
\end{enumerate}

On this procedure we remark the following. We have that $\varphi \colon \Am \to
\Gamma_r$ is a $\Gamma$-equivariant isomorphism. So it induces a bijection
between $\Ho^1\Am$ and $\Ho^1\hs \Gamma_r$.
The latter can be computed by brute force because $\Gamma_r$ is a known
finite group. In the cases that are relevant to our classification, the
lifting in Step 2 always turned out to be possible, but we cannot prove
this a priori.

The resulting classification of the semisimple orbits is described in
Section \ref{sec:tabsemsim}. For the details of the computations we
refer to \cite{BGL2021}. Using some of the results of our computations we
can also prove the following result.

\begin{theorem}\label{thm:cartanr}
All Cartan subspaces in $\g_1$ are conjugate under $\SL(9,\R)$.
\end{theorem}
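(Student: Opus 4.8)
The plan is to avoid computing the Galois cohomology of the normalizer of $\Cg^\cC$ directly, and instead to reduce the statement to the \emph{already established} classification of semisimple orbits, by using the fact that a Cartan subspace is recovered from any regular semisimple element it contains. Fix the standard Cartan subspace $\Cg=\spann_\R(p_1,p_2,p_3,p_4)$, let $\Cg'$ be an arbitrary Cartan subspace of $\g_1$, and aim to produce $h\in\SL(9,\R)$ with $h\cdot\Cg'=\Cg$.

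First I would locate a \emph{regular} real semisimple element $p'\in\Cg'$, meaning one whose centralizer $\z_{\g^\cC}(p')$ is a Cartan subalgebra of $\g^\cC$ (of the minimal possible dimension $8$). Since $\dim\z_{\g^\cC}(\cdot)$ is upper semicontinuous, these elements form a nonempty Zariski-open subset of the complexification $\Cg'^\cC=\Cg'\otimes_\R\C$. Because $\Cg'$ is a real form of $\Cg'^\cC$, its real points are Zariski-dense and therefore meet this open set, so a regular $p'\in\Cg'$ exists.

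Next I would feed $p'$ into the semisimple classification of Section \ref{sec:tabsemsim}. The complex orbit of a regular semisimple element meets $\Cg^\cC$ in a regular element, and the regular elements of $\Cg^\cC$ are exactly those of the canonical set $\Fm_1^\cC$ (the stratum $W_p=1$, on which no root of $\g^\cC$ vanishes, i.e. $\z_{\g^\cC}(p)=\c^\cC$); this is precisely the defining open condition $\lambda_1\lambda_2\lambda_3\lambda_4\neq0$ together with the remaining inequalities listed for $\Fm_1^\cC$. Hence the complex orbit of $p'$ meets $\Fm_1^\cC$. Now the classification for $\Fm_1$ states that the real points of $\Fm_1^\cC$ are exactly the $p^{1,1}_{\lambda_1,\lambda_2,\lambda_3,\lambda_4}$ with real $\lambda_i$, and that there are \emph{no} noncanonical real semisimple elements $\SL(9,\C)$-conjugate to $\Fm_1^\cC$. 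Since every real semisimple element is either canonical or noncanonical, and each complex orbit meets exactly one $\Fm_k^\cC$, it follows that $p'$ must be canonical with respect to $\Fm_1$; that is, there is $h\in\SL(9,\R)$ with $p\ce h\cdot p'\in\Fm_1\subseteq\Cg$, and $p$ is again regular. Finally I would invoke uniqueness: $h\cdot\Cg'$ is a Cartan subspace of $\g_1$ containing $p$, and for regular $p\in\Cg$ we have $\z_{\g_1^\cC}(p)=\z_{\g^\cC}(p)\cap\g_1^\cC=\c^\cC\cap\g_1^\cC=\Cg^\cC$. Thus the complexification of any Cartan subspace $D\subseteq\g_1$ containing $p$ lies in $\Cg^\cC$; since all Cartan subspaces share the dimension $\dim_\C\Cg^\cC=4$, this forces $D^\cC=\Cg^\cC$ and hence $D=\Cg$. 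Applying this to $D=h\cdot\Cg'$ gives $h\cdot\Cg'=\Cg$, and as $\Cg'$ was arbitrary, all Cartan subspaces are $\SL(9,\R)$-conjugate.

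The main obstacle is the middle step: everything else is formal (Zariski density of real points, upper semicontinuity of centralizer dimension, and the uniqueness of the Cartan subspace through a regular point), but the conjugacy of $p'$ into $\Cg$ rests entirely on the computational content recorded for $\Fm_1$ — namely that the complex orbits meeting $\Fm_1^\cC$ split into real orbits all represented inside $\Cg$, with no noncanonical real forms. In the language of Section \ref{sec:semsim} this is the triviality of the relevant Galois cohomology (the vanishing of $\Ho^1\Am$ for $\Fm_1$ together with triviality of $\Ho^1$ of the finite stabilizer of a regular element). One could instead phrase the whole theorem cohomologically, observing that the $\SL(9,\R)$-classes of Cartan subspaces form $\ker\big[\Ho^1\Nm_{\SL(9,\C)}(\Cg^\cC)\to\Ho^1\SL(9,\C)\big]=\Ho^1\Nm_{\SL(9,\C)}(\Cg^\cC)$, a finite set because $\Nm_{\SL(9,\C)}(\Cg^\cC)$ is finite (its centralizer has Lie algebra $\z_{\g_0^\cC}(\Cg^\cC)=0$, and the little Weyl group $W$ is finite); the claim is that this $\Ho^1$ is trivial, which the regular-element argument above establishes.
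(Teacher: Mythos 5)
Your route is genuinely different from the paper's. The paper invokes the general bijection between conjugacy classes of Cartan subspaces in $\g_1$ and $\ker\big[\Ho^1\Nm_{\Gtil_0}(\Cg^\cC)\to\Ho^1\hs\Gtil_0\big]$ (quoted from \cite[Theorem 4.4.9]{BGL2021}) and then kills $\Ho^1\Nm_{\Gtil_0}(\Cg^\cC)$ using the exact sequence $1\to\Zm_{\Gtil_0}(\Cg^\cC)\to\Nm_{\Gtil_0}(\Cg^\cC)\to W\to1$, the brute-force computation $\Ho^1\hs W=1$, and the fact that $\Zm_{\Gtil_0}(\Cg^\cC)$ has odd order $3^5$. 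You instead reduce to the semisimple classification via a regular element; your final uniqueness step (a Cartan subspace containing a regular point of $\Cg$ is forced into $\z_{\g_1}(p)=\Cg$ and hence equals $\Cg$ by maximality) is correct and, as you note, does not require knowing $\dim\Cg'$ in advance.

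The gap is at the very first step: you assert that the regular elements of $\Cg'\otimes_\R\C$ form a \emph{nonempty} Zariski-open subset, but nonemptiness is precisely what needs proof. A priori a real Cartan subspace $\Cg'$ could have dimension less than $4$, or could complexify to a proper subspace of a complex Cartan subspace; proper subspaces of $\Cg^\cC$ (for instance $\C p_1$, whose nonzero elements lie in $\Fm_6^\cC$) can avoid the regular locus entirely, and then no regular $p'\in\Cg'$ exists and the reduction collapses. What you need is that $\Cg'\otimes_\R\C$ is itself a Cartan subspace of $\g_1^\cC$, hence $G_0$-conjugate to $\Cg^\cC$ and therefore meeting the regular stratum $\Fm_1^\cC$. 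This is true but not formal: one has to pass to $\m=\z_\g(\Cg')$, use the real homogeneous Jordan decomposition and Zariski density of real points to show that $[\m,\m]\cap\g_1^\cC$ consists of nilpotent elements and that $\z(\m)\cap\g_1=\Cg'$, or else cite the statement from \cite{Le2011} or \cite{BGL2021}; the paper's own proof hides the same issue inside \cite[Theorem 4.4.9]{BGL2021}. A secondary caution: your middle step leans on the recorded datum that no noncanonical real semisimple element is $\SL(9,\C)$-conjugate to $\Fm_1^\cC$. That datum comes from the $\Ho^1\hm\Am$ and $\Ho^1\hs\CC_p$ computations of Section \ref{sec:semsim} and is independent of Theorem \ref{thm:cartanr}, so there is no circularity; but the adjacent assertion in Section \ref{sec:tabsemsim} that \emph{every} real semisimple orbit meets $\Cg$ is itself deduced from Theorem \ref{thm:cartanr}, so you must use only the $\Fm_1$ statement and not that consequence. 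On balance your argument consumes more computational input (the full $\Fm_1$ analysis) than the paper's, which needs only $\Ho^1\hs W=1$ and the order of $\Zm_{\Gtil_0}(\Cg^\cC)$.
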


\begin{proof}
The short exact sequence
\[1\to\Zm_{\Gtil_0}(\Cg^\cC)\to \Nm_{\Gtil_0}(\Cg^\cC)\to W\to 1,\]
gives rise to a cohomology exact sequence
\[ \Ho^1\Zm_{\Gtil_0}(\Cg^\cC) \to \Ho^1\Nm_{\Gtil_0}(\Cg^\cC)\to \Ho^1\hs W.\]
By brute force (computer) computations it is easily established that
$\Ho^1\hs W = 1$. We have explicitly determined $\Zm_{\Gtil_0}(\Cg^\cC)$, which is a
group of order $3^5$. By \cite[Corollary 3.2.5]{BGL2021}
we have $\Ho^1 \hm A=1$ for any finite group $A$ of order $p^m$, where $p$ is an odd prime.
It follows that $\Ho^1\Zm_{\Gtil_0}(\Cg^\cC)=1$ and hence $\Ho^1\Nm_{\Gtil_0}(\Cg^\cC)=1$. By
\cite[Theorem 4.4.9]{BGL2021}, the conjugacy classes of Cartan subspaces in
$\g_1$ are
in bijection with $\ker\big[ \Ho^1 \Nm_{\Gtil_0}(\Cg^\cC) \to \Ho^1 \Gtil_0\big]$, and the
theorem follows.
\end{proof}

\begin{example}\label{exa:Fm3}
Let $\Fm^\cC=\Fm_3^\cC$ be the third canonical set consisting of
$$p^{3,1}_{\lambda_1,\lambda_2} = \lambda_1 p_1+\lambda_2p_2$$
where $p_1,p_2$ are given in Section \ref{sec:tabsemsim} and the $\lambda_i\in
\C$ are such that $\lambda_1\lambda_2(\lambda_1^6-\lambda_2^6)\neq 0$.
The group $\Gamma_3$ is of order 72 and generated by
$$\SmallMatrix{0&-1\\-1&0},\  \SmallMatrix{-1&0\\0&\zeta},$$
where $\zeta$ is a primitive third root of unity. Here the elements of
$\Gamma_3$ are given as linear transformations of the space spanned by
$p_1,p_2$.

A small brute force computation shows that $\Ho^1\hs \Gamma_3 = \{[1],[-1],
[u_1],[u_2]\}$, where
$$u_1 = \SmallMatrix{-1&0\\0&1},\ \, u_2=\SmallMatrix{0&1\\1&0}.$$
So the elements of $\Fm^\cC$ whose orbits have real points are divided into
four classes consisting of $q\in \Fm^\cC$ with $\ov q=q$, $\ov q=-q$,
$\ov q = u_1^{-1}\cdot q$, and $\ov q = u_2^{-1}\cdot q$, respectively.

We consider the fourth class consisting of $q=\lambda_1 p_1 +\lambda_2p_2$ with
$\lambda_1=x+iy$, $\lambda_2=x-iy$, $x,y\in \R$. The polynomial conditions
on $\lambda_1,\lambda_2$ translate to $xy(x^2-3y^2)(x^2-\tfrac{1}{3}y^2)\neq 0$.
Let $\OOm_{x,y}$ denote the orbit of this element.
Set
$$n_3=\SmallMatrix{ -1 & 0 & 0 & 0 & 0 & 0 & 0 & 0 & 0 \\
                0 & 0 & 0 & 0 & 0 & 0 & -1 & 0 & 0 \\
                0 & 0 & 0 & -1 & 0 & 0 & 0 & 0 & 0 \\
                0 & 0 & -1 & 0 & 0 & 0 & 0 & 0 & 0 \\
                0 & 0 & 0 & 0 & 0 & 0 & 0 & 0 & -1 \\
                0 & 0 & 0 & 0 & 0 & -1 & 0 & 0 & 0 \\
                0 & -1 & 0 & 0 & 0 & 0 & 0 & 0 & 0 \\
                0 & 0 & 0 & 0 & 0 & 0 & 0 & -1 & 0 \\
                0 & 0 & 0 & 0 & -1 & 0 & 0 & 0 & 0},
\quad
g_3 = \SmallMatrix{ 0 & 0 & -1 & 1 & 0 & 0 & 0 & 0 & 0 \\
                0 & -1 & 0 & 0 & 0 & 0 & 1 & 0 & 0 \\
                0 & 0 & 0 & 0 & -\tfrac{1}{2} & 0 & 0 & 0 & \tfrac{1}{2} \\
                \tfrac{1}{2}i & 0 & 0 & 0 & 0 & 0 & 0 & 0 & 0 \\
                0 & 0 & i & i & 0 & 0 & 0 & 0 & 0 \\
                0 & 0 & 0 & 0 & 0 & i & 0 & 0 & 0 \\
                0 & i & 0 & 0 & 0 & 0 & i & 0 & 0 \\
                0 & 0 & 0 & 0 & 0 & 0 & 0 & \tfrac{1}{2}i & 0 \\
                0 & 0 & 0 & 0 & -i & 0 & 0 & 0 & -i}. $$
Then $n_3$ is a cocycle (in fact, $n_3^2=1$) in $\Nm_{\Gtil_0}(\Fm_3^\cC)$
projecting to $u_2$. Furthermore, $g_3^{-1}\ov{g}_3 = n_3$. So a real
representative of $\OOm_{x,y}$ is
$$p_{x,y}=
g_3\cdot q = x(e_{147}-2e_{169}-e_{245}+e_{289}-e_{356}-\tfrac{1}{2}e_{378}) + y(
e_{124}+e_{136}+\tfrac{1}{2}e_{238}-e_{457}+2e_{569}-e_{789}),$$
where $x,y\in \R$ satisfy the polynomial condition written above.

In \cite{BGL2021} the stabilizer $C_q$ of $q$ in $\SL(9,\C)$ is explicitly
determined. We have $C_q = T_4\rtimes H$, where $H$ is a
group of order 9 and $T_4$ a 4-dimensional torus consisting of
$$T_4(t_1,t_2,t_3,t_4)=
\diag\big(\hs t_1,t_2,(t_1t_2)^{-1},t_3,t_4,(t_3t_4)^{-1},(t_1t_3)^{-1},(t_2t_4)^{-1},
t_1t_2t_3t_4\hs\big),$$
for $t_1,t_2,t_3,t_4\in \C^*$. For $g\in C_q$ we set $\sigma_q(g) =
n_3\hs\ov g\hs n_3^{-1}$. Then
$$\sigma_q(T_4(t_1,t_2,t_3,t_4)) =
T_2(\bar t_1,\bar t_1^{-1}\bar t_3^{-1}, \bar t_1^{-1}\bar t_2^{-1},\bar t_1
\bar t_2 \bar t_3 \bar t_4).$$
Using this formula it can be shown (see \cite{BGL2021} for the details)
that $\Ho^1 (T_4,\sigma_q)=1$. Since the component group $H$ is of order 9, this
implies that $\Ho^1 (C_q,\sigma_q) = 1$ as well
(\cite[Proposition 3.3.16]{BGL2021}). It follows that $\OOm_{x,y}$ contains
one $\SL(9,\R)$-orbit with representative $p_{x,y}$.
\end{example}

\subsection{The mixed orbits}\label{subs:mixed}

For $\K=\C,\R$, let $p+e$ in $\g_1^\kK$ be a mixed element, that is, $p$ is
semisimple, $e$ is nilpotent, both are nonzero and $[p,e]=0$.
After acting with $\Gtil_0(\K)$,
we may assume that $p$ is a representative that we obtained when classifying
the semisimple orbits. If we fix such a semisimple element $p$,
then the classification of the
mixed elements with semisimple part equal to $p$ amounts to the
classification of the nilpotent elements in the intersection of the
centralizer of $p$ and $\g_1^\kK$,
up to the action of the stabilizer of $p$ in $\Gtil_0(\K)$.

First we briefly comment on the classification of the elements of mixed
type in $\g_1^\cC$. This classification was carried out in \cite{VE1978}.
From Section \ref{subs:ss} we recall that the
Cartan subspace contains seven canonical sets $\Fm_k^\cC$, $1\leq k\leq 7$ such
that every semisimple element of $\g_1^\cC$ is $\Gtil_0$-conjugate to an
element in precisely one of the $\Fm_k^\cC$. For the classification
of the elements of mixed type, the canonical sets $\Fm_1^\cC$ and $\Fm_7^\cC$
are not relevant, because the elements of $\Fm_1^\cC$ do not centralize non-unit nilpotent
elements and because $\Fm_7^\cC=\{0\}$.

Now  fix a semisimple element
$p\in \Fm_k^\cC$ and write $\a = \z_{\g^\cC}(p)$. Then $\a$ inherits the grading
from $\g^\cC$. The nilpotent elements $e$ such that $p+e$ is of mixed type lie in $\a_1$.
Furthermore, $p+e_1$, $p+e_2$ are $\Gtil_0$-conjugate if and only if
$e_1,e_2$ are $\Zm_{\Gtil_0}(p)$-conjugate. The Lie algebra of
$\Zm_{\Gtil_0}(p)$ is $\a_0$. Hence by using the method of \cite{Vinberg1979}
(see also \cite{Graaf2017}, Chapter 8) we can determine the
$\Zm_{\Gtil_0}(p)^\circ$-orbits of nilpotent elements in $\a_1$. Finally we
reduce the list by considering conjugacy under the elements of the component
group of $\Zm_{\Gtil_0}(p)$. The paper \cite{VE1978} contains lists of
representatives
of nilpotent parts of elements of mixed type, for $p \in \Fm_k^\cC$,
$2\leq k\leq 6$. Let $\Tt^\cC_p$ be the set of homogeneous $\ssl_2$-triples in
$\a$. Note that by Proposition \ref{prop:JMVL}, the nilpotent
$\Zm_{\Gtil_0}(p)$-orbits in $\a_1$ correspond bijectively to the
$\Zm_{\Gtil_0}(p)$-orbits in $\Tt^\cC_p$.
Note also that $\Zm_{\Gtil_0}(p)$ and $\z_{\g^\cC}(p)$ only depend
on $\Fm_k^\cC$, not on the particular element $p$
(Section \ref{subs:ss} and Lemma \ref{lem:Zp}).
Hence the orbits of the nilpotent parts also only depend only on $\Fm_k^\cC$.

Now we turn to the problem of classifying $\Gtil_0(\R)$-orbits of mixed type.
As in Section \ref{sec:tabsemsim}, we denote the set of real points of
$\Fm_k^\cC$ by $\Fm_k$.
We divide the real semisimple elements into two groups: those that are
$\Gtil_0(\R)$-conjugate to an element of some $\Fm_k$ (these are called
canonical semisimple elements) and those that are
not (which are called noncanonical semisimple elements).

Let $p$ be a representative of a semisimple $\Gtil_0(\R)$-orbit and
again let $\a=\z_{\g^\cC}(p)$. Similarly to the complex case, we need to
determine the $\Zm_{\Gtil_0(\R)}(p)$-orbits of real nilpotent elements in
$\a_1$. By Proposition \ref{prop:JMVL} these correspond bijectively to the
$\Zm_{\Gtil_0(\R)}(p)$-orbits in the set $\Tt_p$ of real homogeneous
$\ssl_2$-triples in $\a$. Let $t=(h,e,f)$ be a real homogeneous
$\ssl_2$-triple in $\a$ containing $e$. Let
\[\Zm_0(p,t) = \{ g\in \Zm_{\Gtil_0}(p)\, \mid\, g\cdot h =h,\, g\cdot e=e,\,
        g\cdot f=f\}.\]
Then by Proposition \ref{p:coh-orbits} there is a bijection between
the real $\Zm_{\Gtil_0(\R)}(p)$-orbits contained in the complex $\Zm_{\Gtil_0}(p)$-orbit
of $t$ and $\ker \big[ \Ho^1 \Zm_0(p,t) \to \Ho^1 \Zm_{\Gtil_0}(p)\big]$. In
\cite{BGL2021} it has been established that
$\Ho^1 \Zm_{\Gtil_0}(p)=1$ in all cases. Hence the orbits we are interested
in here correspond bijectively to $\Ho^1 \Zm_0(p,t)$.

The procedure that we use to classify the mixed elements whose semisimple
part is noncanonical is markedly more complex than the procedure for
classifying those with canonical semisimple part.
If the semisimple element $p$ is canonical, then we consider a real nilpotent
$e\in \a=\z_{\g^\cC}(p)$ lying in the homogeneous real $\ssl_2$-triple $t=
(h,e,f)$. In order to compute representatives of the real $\Gtil_0(\R)$-orbits
contained in the $\Gtil_0$-orbit of $p+e$, we compute
the centralizer $\Zm_0(p,t)$ and its Galois cohomology $\Ho^1 \Zm_0(p,t)$.
The elements of the latter set correspond to the real orbits that we are looking
for.

It is possible to take the same approach when $p$ is not canonical.
However, in that case the groups $\Zm_0(p,t)$ tend to be difficult to describe
(they can be nonsplit, for example) and therefore difficult to work with.
Moreover, in these cases it can happen that the complex $\Zm_{\Gtil_0}(p)$-orbit
of a nilpotent $e\in \a_1$ has no real points.

In the next paragraphs we describe a different method for this case.
The main idea is the following. We have that $p$ is conjugate over $\C$
to a canonical
semisimple element $q$. So if $t_1$ is a homogeneous $\ssl_2$-triple
in $\z_{\g^\cC}(p)$ then $t_1$ is $\Gtil_0$-conjugate to a homogeneous
$\ssl_2$-triple
$t$ in $\z_{\g^\cC}(q)$. Also the stabilizers $\Zm_0(p,t_1)$ and $\Zm_0(q,t)$ are
conjugate. We define a conjugation on the latter so that these two groups
are $\Gamma$-equivariantly isomorphic. By doing computations in $\z_{\g^\cC}(q)$,
we decide whether the complex orbit $\Zm_{\Gtil_0}(p)\cdot t_1$ has a real point.
If it does, we work with the preimage $t'$ in $\z_{\g^\cC}(q)$ of such a real point
and compute $\Ho^1 \Zm_0(q,t')$ where we use the modified conjugation.

Let $p\in \g_1$ be a real noncanonical semisimple element and of the form
$p = g\cdot q$, where $g\in \Gtil_0$ and $q\in \Fm_k^\cC$ where $2\leq k\leq 6$.
From our construction
(Proposition \ref{prop:ssorb}) it follows that setting $n=g^{-1}\ov g$
entails $n\in \Zl^1\Nm_{\Gtil_0}(\Fm_k^\cC)$ and $n^{-1}\cdot q = \ov q$.
The cocycles $n$ are explicitly given in \cite{BGL2021}.
In all cases it turns out that $n$ is real, so that
$n^2=1$. In the sequel these $g$ and $n$ are fixed.

Also define
\[\varphi\colon\, \z_{\g^\cC}(q)\cap \g_1^\cC\, \to\,
         \z_{\g^\cC}(p)\cap \g_1^\cC,\quad x\mapsto g\cdot x.\]
Because $\Zm_{\Gtil_0}(p) = g\Zm_{\Gtil_0}(q)g^{-1}$,
this is a bijection between the sets of nilpotent
elements in the respective spaces mapping $\Zm_{\Gtil_0}(q)$-orbits to
$\Zm_{\Gtil_0}(p)$-orbits. For $x\in \z_{\g^\cC}(q)\cap \g_1^\cC$,
the point $g\cdot x$ is real (that is, $\ov{g\cdot x} = g\cdot x$) if and
only if $n\hs\ov x = x$. Since $n\in \Nm_{\Gtil_0}(\Fm_k^\cC)$, we have that
$q$ and $n\cdot q$ both lie in $\Fm_k^\cC$ and therefore
have the same centralizer in $\g^\C$ (see Section \ref{subs:ss});
hence $n\cdot \z_{\g^\cC}(q) = \z_{\g^\cC}(q)$.
From $n^{-1}\cdot q=\ov q$ it follows  that $\z_{\g^\cC}(q)$ is stable under
complex conjugation $x\mapsto \ov x$. We set $\u = \z_{\g^\cC}(q)\cap \g_1^\cC$
and define $\mu \colon \u \to \u$ by $\mu(x) = n\ov x$. So for $x\in \u$ we have
that $\varphi(x)$ is real if and only if $\mu(x)=x$.
Because $n$ is a cocycle, we have $\mu^2(x) = x$ for all $x\in \u$.

Now fix a nilpotent $e\in \z_{\g^\cC}(q)\cap \g_1^\cC=\u$ lying in a homogenous
$\ssl_2$-triple $t=(h,e,f)$. We let $Y = \Zm_{\Gtil_0}(q)\cdot e\subset \u$
be its orbit.
Then $\varphi(Y)$ is a $\Zm_{\Gtil_0}(p)$-orbit in $\z_{\g^\cC}(p)\cap \g_1^\cC$
(and all nilpotent $\Zm_{\Gtil_0}(p)$-orbits in $\z_{\g^\cC}(p)\cap \g_1^\cC$ are
obtained in this way).
We want to determine the real $\Zm_{\Gtil_0(\R)}(p)$-orbits contained in
$\varphi(Y)$.

\begin{lemma}\label{lem:Ymu}
Let $y_0$ be any element of $Y$.
We have $\mu(Y)=Y$ if and only if $\mu(y_0)\in Y$.
\end{lemma}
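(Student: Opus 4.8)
The forward implication is immediate, since $y_0\in Y$ gives $\mu(y_0)\in\mu(Y)=Y$. So the plan is to prove the converse, and the idea is to exploit that $\mu$ intertwines the action of $Z\ce\Zm_{\Gtil_0}(q)$ with the twisted conjugation $\sigma_q(z)=n\hs\ov z\hs n^{-1}$ already met in Section \ref{sec:semsim}. Everything reduces to the fact that $Y$ is a single $Z$-orbit and that $\mu$ carries $Z$-orbits to $Z$-orbits.

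First I would check that $\sigma_q$ stabilizes $Z$. For $z\in Z$, using $n^{-1}\cdot q=\ov q$ and $z\cdot q=q$, one computes
\[\sigma_q(z)\cdot q=n\hs\ov z\cdot(n^{-1}\cdot q)=n\hs\ov z\cdot\ov q=n\cdot\ov{z\cdot q}=n\cdot\ov q=n\cdot(n^{-1}\cdot q)=q,\]
so $\sigma_q(z)\in Z$. Since $n$ is real with $n^2=1$, the map $\sigma_q$ is an anti-regular involution of $Z$, in particular a bijection of $Z$ onto itself.

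Next comes the decisive identity, the semiequivariance of $\mu$: for $z\in Z$ and $x\in\u$,
\[\mu(z\cdot x)=n\hs\ov{z\cdot x}=n\hs\ov z\cdot\ov x=(n\hs\ov z\hs n^{-1})\cdot(n\hs\ov x)=\sigma_q(z)\cdot\mu(x).\]
From this I would deduce that for any $y_0\in Y$, say $y_0=z_0\cdot e$ with $z_0\in Z$, one has $\mu(Y)=\{\mu(z\cdot e)\mid z\in Z\}=\sigma_q(Z)\cdot\mu(e)=Z\cdot\mu(e)$, where the last equality uses that $\sigma_q$ is onto $Z$. Moreover $\mu(y_0)=\sigma_q(z_0)\cdot\mu(e)\in Z\cdot\mu(e)$, so in fact $\mu(Y)=Z\cdot\mu(e)=Z\cdot\mu(y_0)$. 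Thus $\mu(Y)$ is itself a single $Z$-orbit, namely that of $\mu(y_0)$ for any chosen $y_0\in Y$.

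The conclusion is then immediate from the fact that two $Z$-orbits in $\u$ are either equal or disjoint: if $\mu(y_0)\in Y=Z\cdot e$, the orbits $\mu(Y)=Z\cdot\mu(y_0)$ and $Y$ share the point $\mu(y_0)$ and hence coincide, giving $\mu(Y)=Y$. I expect no genuine obstacle in this argument; the only place that uses the specific data of the situation is the verification that $\sigma_q$ preserves $Z$, which rests on the relations $n^{-1}\cdot q=\ov q$ and $n^2=1$ recorded just before the lemma.
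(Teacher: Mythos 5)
Your argument is correct and is essentially the paper's own proof: both rest on the semiequivariance $\mu(z\cdot x)=\sigma_q(z)\cdot\mu(x)$ for $z\in\Zm_{\Gtil_0}(q)$, combined with the fact that $Y$ is a single $\Zm_{\Gtil_0}(q)$-orbit, so that $\mu(Y)$ is again a single orbit meeting $Y$. The only cosmetic difference is that you check $\sigma_q$ preserves $\Zm_{\Gtil_0}(q)$ by computing $\sigma_q(z)\cdot q=q$ directly from $n^{-1}\cdot q=\ov q$, whereas the paper transports the statement through the isomorphism with $\Zm_{\Gtil_0}(p)$ and the reality of $p$; both verifications are valid.
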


\begin{proof}
Only one direction needs a proof, so suppose that $n\cdot\ov y_0 = g_1\cdot y_0$ for some
$g_1\in  \Zm_{\Gtil_0}(q)$.

Note that $\psi \colon \Zm_{\Gtil_0}(q) \to \Zm_{\Gtil_0}(p)$,
$\psi(h) = ghg^{-1}$, is an isomorphism. As $p$ is real, i.e., $\ov p = p$,
we have that $\Zm_{\Gtil_0}(p)$ is closed under conjugation. So for $h\in
\Zm_{\Gtil_0}(q)$ we see that $\psi^{-1} (\ov{\psi(h)})$ lies in $\Zm_{\Gtil_0}(q)$.
But the latter element is equal to $n \ov hn^{-1}$. We conclude that
$\Zm_{\Gtil_0}(q)$ is closed under $h\mapsto n \ov hn^{-1}$.

Let $g_2\in \Zm_{\Gtil_0}(q)$, then
\[\mu(g_2\cdot y_0) = n\cdot\ov{g_2\cdot y_0} = n \ov g_2 n^{-1}\cdot n\cdot\ov y_0
      = n \ov g_2 n^{-1}\cdot g_1\cdot y_0 =g_3\cdot y_0\]
with $g_3\in \Zm_{\Gtil_0}(q)$.
We conclude that $\mu(Y)=Y$.
\end{proof}

Now there are two possibilities. If $\mu(Y)\neq Y$, then $\varphi(Y)$ has no
real points by the previous lemma. In this case there are no real mixed elements
of the form $p+e_1$ conjugate to $q+e$. So we do not consider this $e$.
Note that we can check whether $\varphi(Y) = Y$ using Lemma \ref{lem:Ymu}
along with computational methods for determining conjugacy of nilpotent
elements that are outlined in \cite{BGL2021}.

On the other hand, if $\mu(Y)=Y$ then we consider the restriction of $\mu$ to
$Y$. We set $\YY = (Y,\mu)$. With the methods of Section \ref{sec:findreal}
we establish whether $\YY$ has a real point (that is, a point $y\in Y$ with $\mu(y)=y$)
and, if so, we find one.
If, on the other hand, $\YY$ does not have a real point, then $\varphi(Y)$
has no real points either and also in this case we do not consider this $e$.

We briefly summarize the main steps of the method of Section \ref{sec:findreal}
to find a real point in $\YY$. We set $H= \Zm_{\Gtil_0}(q)$ and define the
conjugation $\tau \colon H\to H$, $\tau(h) =n\ov hn^{-1}$. Set $\HH=(H,\tau)$ and we
assume that $\Ho^1\hs\HH=1$. Then we do the following
\begin{enumerate}
\item Compute $h_0\in H$ such that $\mu(e) = h_0^{-1}e$.
\item Set $C=\Zm_H (e)$ and let $\nu \colon C\to C$ be the conjugation defined by
  $\nu(c) = h_0\tau(c)h_0^{-1}$. Set $\CC = (C,\nu)$.
\item Set $d=h_0\tau(h_0)$ and consider the class $[d]\in \Ho^2 \CC$.
  If $[d]\neq 1$, then $\YY$ has no real point and we stop.
\item Otherwise we can find an element $c\in C$ with $c\hs\nu(c)\hs d=1$. Set $h_1 = c\hs h_0$.
  (Then $\mu(e) = h_1^{-1}\cdot e$ and $h_1\tau(h_1) = 1$.)
\item Because $\Ho^1\hs\HH=1$, we can find $u\in H$ such that $u\hs h_1\tau(u)^{-1} =1$.
  Then $y=u\cdot e$ is a real point of $\YY$.
\end{enumerate}

Now assume that $\YY$ has a real point $e'$.
Then we set $e_1 = g\cdot e'$ and find a real homogeneous $\ssl_2$-triple
$t_1=(h_1,e_1,f_1)$ in $\z_{\g^\cC}(p)$. Set $t'=(h',e',f')$ where
$h'=g^{-1}\cdot h_1$, $f'=g^{-1}\cdot f_1$. Furthermore, we determine an element $g' \in
\Zm_{\Gtil_0}(q)$ such that $g' \cdot t = t'$, then $gg'\cdot t = t_1$.
We set $g_0 = gg'$ and $n_0 = g_0^{-1}\ov g_0$. Then $n_0 \in \Zl^1
\Nm_{\Gtil_0}(\Fm_k^\cC)$.
We compute $Z_q = \Zm_{\Gtil_0}(q,t)$ (the stabilizer of $t$ in $\Zm_{\Gtil_0}(q)$).
We define $\sigma \colon Z_q \to Z_q$ by $\sigma(z) = n_0 \ov z n_0^{-1}$ (in the same
way as in the proof of Lemma \ref{lem:Ymu} it is seen that $Z_q$ is closed under
$\sigma$). We set $\mathbf{Z}_q =(Z_q,\sigma)$. Also, we set
$Z_p = \Zm_{\Gtil_0}(p,t_1)$
and $\mathbf{Z}_p =(Z_p,\ov{\phantom u})$. Then $\psi \colon \mathbf{Z}_q\to
\mathbf{Z}_p$, $\psi(z) = g_0zg_0^{-1}$ is a $\Gamma$-equivariant isomorphism.
Therefore the real orbits in $\varphi(Y)$ correspond bijectively to
$\Ho^1\hs\mathbf{Z}_q$.

The most straightforward way to find representatives of the orbits is to use
the group $\Zm_{\Gtil_0}(q)$, the conjugation $\sigma$ and the elements of
$\Ho^1\hs\mathbf{Z}_q$. For a class $[c]\in \Ho^1\hs\mathbf{Z}_q$ we find an
element $a\in \Zm_{\Gtil_0}(q)$ with $ac = \sigma(a) = n_0 \ov a n_0^{-1}$.
Then $g_0 a\cdot e$ is a nilpotent element in $\z_{\g^\cC}(p)\cap \g_1^\cC$
and $p+g_0 a\cdot e$ is a representative of the orbit of mixed elements
corresponding to the class $[c]$.

\begin{remark}\label{rem:adhc}
We have used an ad hoc method by which it is possible to show in many
cases that $Y$ has a real point. The map $\mu \colon \u\to \u$ is an $\R$-linear
involution. Set $\u_\R = \{ x \in \u \mid \ov x =x\}$.
In all cases that we have considered we have $\ov n=n$. Hence
we have $\mu(\u_\R)=\u_\R$.
We have $\u_\R = \u_+\oplus \u_-$, where the
latter are the eigenspaces of $\mu$ corresponding to the eigenvalues $1$ and
$-1$ respectively. We compute bases of these spaces and consider elements of
three forms: $u\in \u_+$, $iu$ for $u\in \u_-$ and $u_1+iu_2$ for $u_1\in \u_+$,
$u_2\in \u_-$. All these elements are fixed by $\mu$. We list many of these
elements that are nilpotent and check to which $\Zm_{\Gtil_0}(q)$-orbit they
belong. This way we have found real points in most cases.
\end{remark}

\begin{example}
Here we consider the same situation as in Example \ref{exa:Fm3}. We wish
to list the mixed orbits whose semisimple part is $p_{x,y}$. We have
$p_{x,y} = g_3\cdot q$ where $q\in \Fm_3^\cC$. Furthermore $g_3^{-1}\ov g_3 = n_3$.
As above we set $\u = \z_{\g^\cC}(q) \cap \g_1^\cC$ and we define $\mu \colon \u\to\u$
by $\mu(x) = n_3\ov x$.

We know the classification of the nilpotent $\Zm_{\Gtil_0}(q)$-orbits in
$\u$. This classification is given in \cite[Table 2]{VE1978} and also in
Table \ref{tab:fam3_1} (this table
contains the real classification, but here this happens to coincide with
the complex classification). We consider the third nilpotent element
$e=e_{159}+e_{168}+e_{267}$, which lies in a homogeneous $\ssl_2$-triple
$t=(h,e,f)$ (which we do not explicitly describe).
By the ad hoc method of the previous remark we
found an element $e'$ in the $\Zm_{\Gtil_0}(q)$-orbit of $e$ with $\mu(e')=e'$.
It is $e'=-e_{159}+ie_{168}-e_{267}$.

Now we set $e_1=g_3\cdot e'=-2e_{267}-\tfrac{1}{2}e_{349}+\tfrac{1}{4}e_{468}$.
We compute a homogeneous $\ssl_2$-triple $t_1=(h_1,e_1,f_1)$ and set
$h'= g_3^{-1} \cdot h_1$, $f'= g_3^{-1}\cdot f_1$ (here we do not give
explicit expressions for these elements). We determine an element $g'\in
\Zm_{\Gtil_0}(q)$ such that $g'\cdot t=t'$ and set $g_0=gg'$. Then
$g_0\cdot t=t_1$. Furthermore, we set $n_0 = g_0^{-1}\ov g_0$, so that
$n_0 \in \Zl^1 \Nm_{\Gtil_0}(\Fm_3^\cC)$. We have
  $$g_0=\SmallMatrix{ 0 & 0 & i & -1 & 0 & 0 & 0 & 0 & 0 \\
                0 & -1 & 0 & 0 & 0 & 0 & i & 0 & 0 \\
                0 & 0 & 0 & 0 & -\tfrac{1}{2}i & 0 & 0 & 0 & \tfrac{1}{2} \\
                -\tfrac{1}{2} & 0 & 0 & 0 & 0 & 0 & 0 & 0 & 0 \\
                0 & 0 & 1 & -i & 0 & 0 & 0 & 0 & 0 \\
                0 & 0 & 0 & 0 & 0 & -1 & 0 & 0 & 0 \\
                0 & i & 0 & 0 & 0 & 0 & -1 & 0 & 0 \\
                0 & 0 & 0 & 0 & 0 & 0 & 0 & \tfrac{1}{2} & 0 \\
                0 & 0 & 0 & 0 & 1 & 0 & 0 & 0 & -i},
  \quad
  n_0=\SmallMatrix{ 1 & 0 & 0 & 0 & 0 & 0 & 0 & 0 & 0 \\
                0 & 0 & 0 & 0 & 0 & 0 & i & 0 & 0 \\
                0 & 0 & 0 & i & 0 & 0 & 0 & 0 & 0 \\
                0 & 0 & i & 0 & 0 & 0 & 0 & 0 & 0 \\
                0 & 0 & 0 & 0 & 0 & 0 & 0 & 0 & i \\
                0 & 0 & 0 & 0 & 0 & 1 & 0 & 0 & 0 \\
                0 & i & 0 & 0 & 0 & 0 & 0 & 0 & 0 \\
                0 & 0 & 0 & 0 & 0 & 0 & 0 & 1 & 0 \\
                0 & 0 & 0 & 0 & i & 0 & 0 & 0 & 0}. $$
Let $Z_q= \Zm_{\Gtil_0}(t,q)$. The identity component of this group is a
1-dimensional torus $T_1$ consisting of the elements
$$T_1(s) = \diag(1,s,s^{-1},s,s^{-1},1,s^{-1},1,s).$$
The component group is of order 9 (in \cite{BGL2021} it is explicitly given).
We define the conjugation $\sigma \colon Z_q\to Z_q$ by $\sigma(z) = n_0 \ov x
n_0^{-1}$. A small calculation shows that $\sigma(T_1(s)) = T_1(\ov s^{-1})$.
This implies that $\Ho^1 (T_1,\sigma) = \{[1],[T_1(-1)]\}$ (see
\cite[Examples 3.1.7(3)]{BGL2021}). By \cite[Proposition 3.3.16]{BGL2021}
it follows that $\Ho^1 (Z_q,\sigma) = \{[1],[T_1(-1)]\}$.

Next we set $c=T_1(-1)$ and find an element $a\in \Zm_{\Gtil_0}(q)$ with
$ac=n_0\hs\ov a\hs n_0^{-1}$. It is
$$a=\diag(-1,-i,-i,-i,i,1,-i,1,i).$$
We have $g_0a\cdot e = 2e_{267}-\tfrac{1}{2}e_{349}-\tfrac{1}{4}e_{468}$.
So we get two real mixed orbits with representatives $q_{x,y}+e_1$ and
$q_{x,y}+g_0a\cdot e$. They are both $\SL(9,\C)$-conjugate to
$p+e$.
\end{example}


\begin{thebibliography}{99}

\bibitem{AT2018}
J. Adams and O. Ta\"{\i}bi.
\newblock Galois and {C}artan cohomology of real groups.
\newblock {\em Duke Math. J.}, 167(6):1057--1097, 2018.

\bibitem{Borel1966}
A. Borel.
\newblock Linear algebraic groups.
\newblock In {\em Algebraic {G}roups and {D}iscontinuous {S}ubgroups ({P}roc.
  {S}ympos. {P}ure {M}ath., {B}oulder, {C}olo., 1965)}, pages 3--19. Amer.
  Math. Soc., Providence, R.I., 1966.

\bibitem{Borel1991}
A. Borel.
\newblock {\em Linear algebraic groups}, volume 126 of {\em Graduate Texts in
  Mathematics}.
\newblock Springer-Verlag, New York, second edition, 1991.

\bibitem{Borovoi1993}
M. Borovoi.
\newblock Abelianization of the second nonabelian {G}alois cohomology.
\newblock {\em Duke Math. J.}, 72(1):217--239, 1993.

\bibitem{Borovoi2021}
M. Borovoi.
\newblock Real points in a homogeneous space of a real algebraic group.
\newblock {\tt arXiv:2106.14871 [math.AG]}.

\bibitem{BGL2021}
M. Borovoi, W.\,A. de~Graaf, and H.\,V. L\^{e}.
\newblock Real graded Lie algebras, Galois cohomology and classification of
  trivectors in $\mathds{R}^9$.
\newblock {\tt arXiv:2106.00246 [math.RT]}.

\bibitem{BLR}
S. Bosch, W. L\"{u}tkebohmert, and M. Raynaud.
\newblock {\em N\'{e}ron models}, volume~21 of {\em Ergebnisse der Mathematik
  und ihrer Grenzgebiete (3)}.
\newblock Springer-Verlag, Berlin, 1990.

\bibitem{Bourbaki79}
N. Bourbaki.
\newblock {\em Lie groups and {L}ie algebras. {C}hapters 7--9}.
\newblock Elements of Mathematics. Springer-Verlag, Berlin, 2005.
\newblock Translated from the 1975 and 1982 French originals by Andrew
  Pressley.

\bibitem{CS2009}
A. \v{C}ap and J. Slov\'{a}k.
\newblock {\em Parabolic geometries. {I}}, volume 154 of {\em Mathematical
  Surveys and Monographs}.
\newblock American Mathematical Society, Providence, RI, 2009.
\newblock Background and general theory.

\bibitem{CH1988}
A.\,M. Cohen and A.\,G. Helminck.
\newblock Trilinear alternating forms on a vector space of dimension {$7$}.
\newblock {\em Comm. Algebra}, 16(1):1--25, 1988.

\bibitem{CLO15}
D.\,A. Cox, J. Little, and D. O'Shea.
\newblock {\em Ideals, varieties, and algorithms}.
\newblock Undergraduate Texts in Mathematics. Springer, Cham, fourth edition,
  2015.
\newblock An introduction to computational algebraic geometry and commutative
  algebra.

\bibitem{Graaf2017}
W.\,A. de~Graaf.
\newblock {\em Computation with linear algebraic groups}.
\newblock Monographs and Research Notes in Mathematics. CRC Press, Boca Raton,
  FL, 2017.

\bibitem{sla}
W.\,A. de~Graaf and T.~GAP~Team.
\newblock {SLA}, computing with simple Lie algebras, {V}ersion 1.5.3.
\newblock {\url{https://gap-packages.github.io/}\discretionary
  {}{}{}\texttt{sla/}}, Nov 2019.
\newblock Refereed GAP package.

\bibitem{DGPS}
W. Decker, G.-M. Greuel, G. Pfister, and H. Sch\"onemann.
\newblock {\sc Singular} {4-2-1} --- {A} computer algebra system for polynomial
  computations.
\newblock \url{http://www.singular.uni-kl.de}, 2021.

\bibitem{DLA2019}
C. Demarche and G. Lucchini~Arteche.
\newblock Le principe de {H}asse pour les espaces homog\`enes: r\'{e}duction au
  cas des stabilisateurs finis.
\newblock {\em Compos. Math.}, 155(8):1568--1593, 2019.

\bibitem{Djokovic1983}
D.\,\v{Z}. Djokovi\'{c}.
\newblock Classification of trivectors of an eight-dimensional real vector
  space.
\newblock {\em Linear and Multilinear Algebra}, 13(1):3--39, 1983.

\bibitem{gap4}
The GAP~Group.
\newblock {\em {GAP -- Groups, Algorithms, and Programming, Version 4.11.1}},
  2021.

\bibitem{Gurevich1935a}
G.\,B. Gurevich.
\newblock {Classification des trivecteurs ayant le rang huit}.
\newblock {\em {C. R. (Dokl.) Acad. Sci. URSS, n. Ser.}}, 1935(2):353--356,
  1935.

\bibitem{Gurevich1964}
G.\,B. Gurevich.
\newblock {\em Foundations of the theory of algebraic invariants}.
\newblock P. Noordhoff Ltd., Groningen, 1964.
\newblock Translated by J. R. M. Radok and A. J. M. Spencer.

\bibitem{HL1982}
R. Harvey and H.\,B. Lawson, Jr.
\newblock Calibrated geometries.
\newblock {\em Acta Math.}, 148:47--157, 1982.

\bibitem{Hitchin2000}
N. Hitchin.
\newblock The geometry of three-forms in six dimensions.
\newblock {\em J. Differential Geom.}, 55(3):547--576, 2000.

\bibitem{HP2020}
J. Hora and P. Pudl\'{a}k.
\newblock Classification of 9-dimensional trilinear alternating forms over
  {$\rm GF(2)$}.
\newblock {\em Finite Fields Appl.}, 70:101788, 26, 2021.

\bibitem{Jahnel}
J. Jahnel.
\newblock The {B}rauer-{S}everi variety associated with a central simple
  algebra: a survey.
\newblock Preprint server: Linear Algebraic Groups and Related Structures, No.
  52, 2000, \url{https://www.math.uni-bielefeld.de/LAG/man/052.pdf}.

\bibitem{Joyce2007}
D.\,D. Joyce.
\newblock {\em Riemannian holonomy groups and calibrated geometry}, volume~12
  of {\em Oxford Graduate Texts in Mathematics}.
\newblock Oxford University Press, Oxford, 2007.

\bibitem{Kac1995}
V.\,G. Kac.
\newblock {\em Infinite-dimensional {L}ie algebras}.
\newblock Cambridge University Press, Cambridge, third edition, 1990.

\bibitem{Le2011}
H.\,V. L\^{e}.
\newblock Orbits in real {$Z_m$}-graded semisimple {L}ie algebras.
\newblock {\em J. Lie Theory}, 21(2):285--305, 2011.

\bibitem{LV2020}
H.\,V. L\^e and J. Van\v{z}ura.
\newblock Classification of {$k$}-forms on {${\mathbb R}^n$} and the existence
  of associated geometry on manifolds.
\newblock {\em Chebyshevskii Sb.}, 21(2):362--382, 2020.

\bibitem{LT2009}
G.\,I. Lehrer and D.\,E. Taylor.
\newblock {\em Unitary reflection groups}, volume~20 of {\em Australian
  Mathematical Society Lecture Series}.
\newblock Cambridge University Press, Cambridge, 2009.

\bibitem{MN2013}
N.~Midoune and L.~Noui.
\newblock Trilinear alternating forms on a vector space of dimension 8 over a
  finite field.
\newblock {\em Linear Multilinear Algebra}, 61(1):15--21, 2013.

\bibitem{Milne2017}
J.\,S. Milne.
\newblock {\em Algebraic groups}, volume 170 of {\em Cambridge Studies in
  Advanced Mathematics}.
\newblock Cambridge University Press, Cambridge, 2017.
\newblock The theory of group schemes of finite type over a field.

\bibitem{Nahm1978}
W. Nahm.
\newblock Supersymmetries and their representations.
\newblock {\em Nuclear Phys. B}, 135(1):149--166, 1978.

\bibitem{Noui1997}
L. Noui.
\newblock Transvecteur de rang 8 sur un corps alg\'{e}briquement clos.
\newblock {\em C. R. Acad. Sci. Paris S\'{e}r. I Math.}, 324(6):611--614, 1997.

\bibitem{NR1994}
L. Noui and P. Revoy.
\newblock Formes multilineaires altern\'{e}es.
\newblock {\em Ann. Math. Blaise Pascal}, 1(2):43--69 (1995), 1994.

\bibitem{OV1990}
A.\,L. Onishchik and E.\,B. Vinberg.
\newblock {\em Lie groups and algebraic groups}.
\newblock Springer Series in Soviet Mathematics. Springer-Verlag, Berlin, 1990.
\newblock Translated from the Russian and with a preface by D. A. Leites.

\bibitem{Reichel}
W. Reichel.
\newblock {\"Uber trilineare alternierende Formen in sechs und sieben
  Ver\"anderlichen und die durch sie definierten geometrischen Gebilde}.
\newblock {Dissertation, Greifswald. 59 S.}, 1907.

\bibitem{Revoy}
P. Revoy.
\newblock Trivecteurs de rang {$6$}.
\newblock {\em Bull. Soc. Math. France M\'{e}m.}, (59):141--155, 1979.
\newblock Colloque sur les Formes Quadratiques, 2 (Montpellier, 1977).

\bibitem{Schouten31}
J.\,A. Schouten.
\newblock {Klassifizierung der alternierenden Gr\"o{\ss}en dritten Grades in 7
  Dimensionen}.
\newblock {\em {Rend. Circ. Mat. Palermo}}, 55:137--156, 1931.

\bibitem{Serre1988}
J.-P. Serre.
\newblock {\em Algebraic groups and class fields}, volume 117 of {\em Graduate
  Texts in Mathematics}.
\newblock Springer-Verlag, New York, 1988.
\newblock Translated from the French.

\bibitem{Serre1997}
J.-P. Serre.
\newblock {\em Galois cohomology}.
\newblock Springer Monographs in Mathematics. Springer-Verlag, Berlin, english
  edition, 2002.
\newblock Translated from the French by Patrick Ion and revised by the author.

\bibitem{Springer1966}
T.\,A. Springer.
\newblock Nonabelian {$H^{2}$} in {G}alois cohomology.
\newblock In {\em Algebraic {G}roups and {D}iscontinuous {S}ubgroups ({P}roc.
  {S}ympos. {P}ure {M}ath., {B}oulder, {C}olo., 1965)}, pages 164--182. Amer.
  Math. Soc., Providence, R.I., 1966.

\bibitem{Steinberg1975}
R. Steinberg.
\newblock Torsion in reductive groups.
\newblock {\em Advances in Math.}, 15:63--92, 1975.

\bibitem{Swann1991}
A. Swann.
\newblock Hyper-{K}\"{a}hler and quaternionic {K}\"{a}hler geometry.
\newblock {\em Math. Ann.}, 289(3):421--450, 1991.

\bibitem{Vinberg1975}
{\`E}.\,B. Vinberg.
\newblock Linear groups that are connected with periodic automorphisms of
  semisimple algebraic groups.
\newblock {\em Dokl. Akad. Nauk SSSR}, 221(4):767--770, 1975.
\newblock English translation: Soviet Math. Dokl. 16, no. 2, 406–409(1975).

\bibitem{Vinberg1976}
{\`E}.\,B. Vinberg.
\newblock The {W}eyl group of a graded {L}ie algebra.
\newblock {\em Izv. Akad. Nauk SSSR Ser. Mat.}, 40(3):488--526, 1976.
\newblock English translation: Math. USSR-Izv. 10, 463--495 (1976).

\bibitem{Vinberg1979}
{\`E}.\,B. Vinberg.
\newblock Classification of homogeneous nilpotent elements of a semisimple
  graded {L}ie algebra.
\newblock {\em Trudy Sem. Vektor. Tenzor. Anal.}, (19):155--177, 1979.
\newblock English translation: Selecta Math. Sov. 6, 15-35 (1987).

\bibitem{VE1978}
{\`E}.\,B. Vinberg and A.\,G. Elashvili.
\newblock A classification of the three-vectors of nine-dimensional space.
\newblock {\em Trudy Sem. Vektor. Tenzor. Anal.}, 18:197--233, 1978.
\newblock English translation: Selecta Math. Sov., 7, 63-98, (1988).

\bibitem{Wallach2017}
N.\,R. Wallach.
\newblock {\em Geometric invariant theory}.
\newblock Universitext. Springer, Cham, 2017.
\newblock Over the real and complex numbers.

\bibitem{Westwick}
R. Westwick.
\newblock Real trivectors of rank seven.
\newblock {\em Linear and Multilinear Algebra}, 10(3):183--204, 1981.

\bibitem{WG1968I}
J.\,A. Wolf and A. Gray.
\newblock Homogeneous spaces defined by {L}ie group automorphisms. {I}.
\newblock {\em J. Differential Geometry}, 2:77--114, 1968.

\bibitem{WG1968II}
J.\,A. Wolf and A. Gray.
\newblock Homogeneous spaces defined by {L}ie group automorphisms. {II}.
\newblock {\em J. Differential Geometry}, 2:115--159, 1968.

\end{thebibliography}

\end{document}